\newtheorem{theorem}{Theorem}[section]
\newtheorem{lemma}[theorem]{Lemma}
\newtheorem{proposition}[theorem]{Proposition}
\theoremstyle{definition}
\newtheorem{example}[theorem]{Example}
\newtheorem{definition}[theorem]{Definition}
\newtheorem{corollary}[theorem]{Corollary}
\newtheorem{remark}[theorem]{Remark}
\newcommand{\Sh}{\operatorname{Sh}}
\newcommand{\MA}{\operatorname{MA}}
\newcommand{\Psh}{\operatorname{Psh}}
\renewcommand{\limsup}{\operatorname{limsup}}
\numberwithin{equation}{section}
\begin{document}

\title[The Dirichlet Problem for the Complex HMAE]{The Dirichlet Problem for the Complex Homogeneous
    Monge-Amp\`ere Equation}


\author{Julius Ross}
\address{DPMMS\\University of Cambridge\\UK}
\email{j.ross@dpmms.cam.ac.uk}

\author{David Witt Nystr\"om}
\address{Department of Mathematical Sciences\\Chalmers University of Technology and the University of Gothenburg\\Sweden}
\email{wittnyst@chalmers.se}
\email{danspolitik@gmail.com}
\dedicatory{Dedicated to Sir Simon Donaldson on the occasion of
  his 60th birthday}


\date{}

\begin{abstract}
    We survey the Dirichlet problem for the complex Homogeneous Monge-Amp\`ere Equation, both in the
    case of domains in $\mathbb C^n$ and the case of compact K\"ahler manifolds parametrized by a
    Riemann surface with boundary.  We then give a self-contained account of previous work of the
    authors that connects this with the Hele-Shaw flow, and give several concrete examples
    illustrating various phenomena that solutions to this problem can display.
\end{abstract}

\maketitle


\section{Introduction}\label{sec:introduction}

Let $X$ be a compact complex manifold of dimension $n$,  $\Sigma$ be a compact Riemann-surface with non-empty smooth boundary, and $\{\omega_{\tau}\}_{\tau\in \partial \Sigma}$  be a  family of real $(1,1)$ forms or currents on $X$.   Usually we will assume $\omega_{\tau}$ is smooth, strictly positive, and varies smoothly in $\tau$, thus giving a smooth family of K\"ahler forms parameterized by $\partial\Sigma$.    

The Dirichlet problem for the complex Homogeneous Monge-Amp\`ere Equation (HMAE) in this setting seeks a real $(1,1)$ form, or current, $\Omega$ on $X\times \Sigma$ satisfying
\begin{align}
\Omega^{n+1} &=0,\label{eq:chmaeintro}\tag{$\diamondsuit$}\\
\Omega &\ge 0,\nonumber\\
\Omega|_{X\times \{\tau\}} &= \omega_\tau \text{ for } \tau \in \partial \Sigma.\nonumber
\end{align}
It is known, under suitable hypothesis on $\Sigma$, that one can always find a solution to this equation in the sense of currents, where $\Omega^{n+1}$ is to be understood as the Monge-Amp\`ere operator defined by Bedford-Taylor.  Following Donaldson, we say that a solution is \emph{regular} if $\Omega$ is smooth and the $(1,1)$-forms $\Omega|_{X\times \{ \tau \}}$  are strictly positive for all $\tau\in \Sigma$.  Thus a regular solution gives a family of K\"ahler forms on $X$ parameterized by $\Sigma$.     The guiding question we will be interested in is how far a general solution to \eqref{eq:chmaeintro} is from being regular, and whether there are conditions under which a regular solution can be guaranteed.

The first part of this survey considers various instances of the HMAE, beginning with the work of Bedford-Taylor on the Dirichlet problem for domains in $\mathbb C^n$ and the pluricomplex Green function introduced by Klimek.  We then turn to the setting above,  which took on particular importance through work of Semmes and independently Donaldson, who observed that it comes up naturally as the geodesic equation in the space of K\"ahler metrics on $X$.

In the second part we give a self-contained account of previous work of the authors that relates the
HMAE when $X$ itself is a Riemann-surface to a well-known problem in fluid mechanics called the
Hele-Shaw flow.  In doing so we are able to much better understand this regularity problem, and we
end with four concrete examples that show the kind of irregular behaviour that solutions to
\eqref{eq:chmaeintro} can display.   In each case these will be obtained by constructing Hele-Shaw flows with particular properties.

Our first example (Section \ref{sec:slits}) considers flows developing a ``self-tangency" in which at a particular time a Hele-Shaw domain is simply connected, but has boundary that self-intersects tangentially at a  point.  From this we can produce a solution to the HMAE that is not twice differentiable at certain points.  In fact the relation between the self-tangency and this non-differentiability is extremely explicit, and one can not only see at which points this occurs but also the directions along which the second derivative does not exist.  

The second example (Section \ref{sec:multiply}) shows the Hele-Shaw flow becoming non-simply connected, from which we can produce a solution to the HMAE admitting a definite region that does not intersect any proper harmonic disc.  This obstructs the existence of a local Monge-Amp\`ere foliation with proper leaves, and so this weak solution is far away from being regular. 

In the third example (Section \ref{sec:ex3}) we produce a flow whose final domain is simply connected but has as boundary some (non-trivial) curve.  From this we get examples of solutions to the HMAE that fail to to have the so-called ``maximal rank" property.  In the final example (Section \ref{sec:acute}) we apply work of Sakai concerning the Hele-Shaw flow for domains that have acute corners to obtain boundary data for the HMAE over the disc that is $\mathcal{C}^{1,\alpha}$ for all $\alpha<1$ but whose weak solution is not even $\mathcal{C}^1$ up to the boundary.

\section{Preliminaries}

Throughout, $\mathbb D$ and $\overline{\mathbb D}$ denote the open and closed unit disc in $\mathbb C$ respectively, and $\mathbb D^\times$ and $\overline{\mathbb D}^\times$ will denote these with the origin removed.   On any complex manifold $X$ we use the convention
$$d^c = \frac{i}{2\pi}(\overline{\partial} - \partial), \text{ so } dd^c = \frac{i}{\pi} \partial \overline{\partial}.$$

Given a closed real $(1,1)$-form $\theta$ on a connected $X$, we say $u:X\to [-\infty,\infty)$ is \emph{$\theta$-pluri\-subharmonic} (or simply \emph{$\theta$-psh}) if whenever locally $\theta = dd^cv$ then $u+v$ is plurisubharmonic.  When $u$ is upper-semicontinuous, locally integrable, and not identically $-\infty$ we write
$$\theta_u:=\theta+dd^c u,$$ 
and if $u$ is $\theta$-psh then $\theta_u\ge 0$ in the sense of currents.   The space of plurisubharmonic functions on $X$ is denoted by $\Psh(X)$ and the $\theta$-psh functions by $\Psh(X,\theta)$.    When $X$ has complex dimension $1$, being plurisubharmonic is the same as being subharmonic, and we use the more common notation $\Sh(X,\theta)$ for the space of $\theta$-subharmonic functions in this case.

For general $\theta$ there might not be any $\theta$-psh functions, but if $\theta$ is strictly positive (and thus a K\"ahler form) then there certainly are (for instance the constant functions).  We say $u$ is a \emph{K\"ahler potential} (with respect to $\theta$) if it is smooth and $\theta_u$ strictly positive, and denote the space of such potentials by $\mathcal K(X,\theta)$.

Suppose $u$ is plurisubharmonic on a domain in $\mathbb C^n$.  When $u$ is twice differentiable its Monge-Amp\`ere measure is defined as $\MA(u):=(dd^cu)^n$. In general, $dd^cu$ is merely a positive current so the wedge product $(dd^cu)^n$ does not immediately make sense.  However, Bedford-Taylor showed that the complex Monge-Amp\`ere operator can be defined for plurisubharmonic functions that are locally bounded \cite{BedfordTaylor2}. The idea is to define the current $(dd^cu)^m:=dd^c(u(dd^cu)^{m-1})$ inductively for $m\in\{1,2,...,n\}$. Assuming $(dd^cu)^{m-1}$ is a positive current, it follows that it has measure coefficients, and since $u$ is locally bounded $u(dd^cu)^{m-1}$ will also be a current with measure coefficients.   Thus $(dd^cu)^m:=dd^c(u(dd^cu)^{m-1})$ is a well defined current and Bedford-Taylor prove that it is positive. Hence by induction $\MA(u):=(dd^c u)^n$ is a well defined positive $(n,n)$-current, i.e. a positive measure.

When $u$ is locally bounded its Monge-Amp\`ere measure relative to a smooth $(1,1)$-form $\theta$ is defined locally where $\theta=dd^c v$ as $\MA_{\theta}(u):=MA(u+v)$. 

\section{The HMAE on domains in $\mathbb{C}^n$} \label{Sect:Dir}

\subsection{Perron-Bremermann Envelopes}

Let $U$ be a smoothly bounded domain in $\mathbb{C}^n$ and $\phi\in \mathcal{C}^0(\partial U)$. The Dirichlet problem for the complex Homogeneous Monge-Amp\`ere Equation (HMAE) on $U$ with boundary data $\phi$ asks for a locally bounded $u\in \Psh(U)$ such that
\begin{align}\label{eq:hmaelocal}
    \MA(u)&=0 \text{ on }U,\\
    \lim_{z\to\zeta}u(z)&=\phi(\zeta) \text{ for all } \zeta\in \partial U.\nonumber
\end{align}
As in the one dimensional case (i.e.\ when solving the Laplace equation) solutions to HMAE can be found using envelope constructions.  The \emph{Perron-Bremermann} envelope $u$ is defined as 
\begin{equation}\label{eq:perron}
u:={\sup}^*\{v\in \Psh(U): \limsup_{z\to\zeta}v(z)\leq \phi(\zeta), \forall \zeta\in \partial U\},
\end{equation}
where ${\sup}^*$ means the upper-semicontinuous regularization of the supremum.  A proof of the following statement can be found in \cite[p18]{Guedjbook}.  

\begin{theorem} \label{thm:locdir}
Assume $U$ is a smoothly bounded and strictly pseudoconvex domain in $\mathbb C^n$.  Then the Perron-Bremermann envelope $u$ is the unique solution to the Dirichlet problem for the HMAE \eqref{eq:hmaelocal} with boundary $\phi$.
\end{theorem}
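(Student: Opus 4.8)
The plan is to verify, one by one, that the Perron--Bremermann envelope $u$ defined in \eqref{eq:perron} has the three properties required of a solution---plurisubharmonicity together with local boundedness, the correct boundary values, and $\MA(u)=0$---and then to deduce uniqueness. Write $\mathcal F$ for the family of competitors in \eqref{eq:perron}. First I would note that $\mathcal F$ is nonempty and uniformly bounded above: since $\phi$ is continuous on the compact set $\partial U$ it is bounded, so a sufficiently negative constant (or a large multiple of a strictly psh defining function) lies in $\mathcal F$, while the maximum principle forces every $v\in\mathcal F$ to satisfy $v\le\sup_{\partial U}\phi$. Hence $u$ is locally bounded, and by Choquet's lemma combined with the standard fact that the upper-semicontinuous regularisation of a family of psh functions that is locally bounded above is psh, we get $u\in\Psh(U)$.

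Next I would establish the boundary condition $\lim_{z\to\zeta_0}u(z)=\phi(\zeta_0)$ for each $\zeta_0\in\partial U$; this is the step in which strict pseudoconvexity enters, through barrier constructions. Let $\rho$ be a strictly plurisubharmonic defining function, so that $U=\{\rho<0\}$ with $d\rho\ne 0$ on $\partial U$. For the lower estimate, given $\epsilon>0$ the continuity of $\phi$ allows me to use the psh function $v=\phi(\zeta_0)-\epsilon+C\rho$; for $C$ large enough this lies in $\mathcal F$ and therefore $\liminf_{z\to\zeta_0}u(z)\ge\phi(\zeta_0)-\epsilon$. For the upper estimate I would construct a plurisuperharmonic barrier that dominates every competitor near $\zeta_0$---again built from $\rho$, or equivalently from a holomorphic peak function supplied by strict pseudoconvexity---to obtain $\limsup_{z\to\zeta_0}u(z)\le\phi(\zeta_0)$. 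Letting $\epsilon\to 0$ yields the limit, and in particular $u$ extends continuously to $\overline U$ with $u|_{\partial U}=\phi$.

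The crux, and what I expect to be the main obstacle, is showing $\MA(u)=0$ on $U$, since this rests on the full Bedford--Taylor machinery \cite{BedfordTaylor2}. Arguing by contradiction, suppose $\MA(u)$ assigns positive mass to some ball $B\Subset U$. On $B$ I would invoke the Bedford--Taylor solution of the Dirichlet problem with boundary data $u|_{\partial B}$, producing $\tilde u\in\Psh(B)$ with $\MA(\tilde u)=0$ and $\tilde u=u$ on $\partial B$. Since this $\tilde u$ is itself a Perron envelope over $B$ and $u|_B$ is an admissible competitor, $\tilde u\ge u$ on $B$; moreover $\tilde u\not\equiv u$, for otherwise $\MA(u)=\MA(\tilde u)=0$ on $B$. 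Gluing $\tilde u$ on $B$ to $u$ on $U\setminus B$ (legitimate because the two agree on $\partial B$ and $\tilde u\ge u$) gives a psh function still in $\mathcal F$---its behaviour near $\partial U$ is unchanged---that strictly dominates $u$ somewhere, contradicting $u={\sup}^*\mathcal F$. The delicate points here are the solvability of the local problem with merely continuous boundary data and the convergence properties of the Monge--Amp\`ere operator that make the gluing valid.

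Finally I would settle uniqueness using the comparison principle, equivalently the Bedford--Taylor characterisation of $\MA(\cdot)=0$ as maximality. If $u'$ is any locally bounded psh solution with the same boundary data, then $u'$ is itself a competitor in \eqref{eq:perron}, whence $u'\le u$; conversely, $u$ is psh with $u=\phi=u'$ on $\partial U$, so maximality of $u'$ forces $u\le u'$. Therefore $u=u'$, completing the argument.
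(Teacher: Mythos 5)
The paper itself gives no proof of this theorem --- it simply cites \cite[p18]{Guedjbook} --- and your outline is exactly the standard argument found in that reference: plurisubharmonicity of the envelope via Choquet's lemma, barriers at the boundary, a balayage (local lifting) argument for $\MA(u)=0$, and comparison/maximality for uniqueness. Two of your steps, however, have genuine gaps as written.

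First, your lower barrier fails. The defining function $\rho$ vanishes identically on $\partial U$, so the function $v=\phi(\zeta_0)-\epsilon+C\rho$ has boundary values $\phi(\zeta_0)-\epsilon$ at \emph{every} point of $\partial U$; it belongs to $\mathcal F$ only if $\phi(\zeta_0)-\epsilon\le\min_{\partial U}\phi$, and increasing $C$ does nothing to repair this. (The same misconception appears in your nonemptiness remark about ``a large multiple of a strictly psh defining function''; there the negative constant saves you.) The correct barrier must be localized at $\zeta_0$: for $c>0$ small the function $q(z)=\rho(z)-c|z-\zeta_0|^2$ is still plurisubharmonic near $\overline U$ --- this is precisely where strict plurisubharmonicity of $\rho$ is used --- vanishes at $\zeta_0$, and is strictly negative on $\overline U\setminus\{\zeta_0\}$; then $\phi(\zeta_0)-\epsilon+Cq$ lies in $\mathcal F$ for $C$ large, and $\phi(\zeta_0)+\epsilon-Cq$ serves as the upper barrier. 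Note that your formula, as written, never actually uses strict pseudoconvexity, contradicting your own (correct) remark that this is the step where it must enter.

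Second, the balayage step invokes the Bedford--Taylor solution of the Dirichlet problem on $B$ with boundary data $u|_{\partial B}$, which requires that data to be continuous. You proved that $u$ attains its boundary values continuously on $\partial U$, but continuity of $u$ in the \emph{interior} of $U$ is a separate statement --- it is Walsh's theorem, mentioned in the paper's bibliographical remarks --- and it is a genuine ingredient of the standard proof, not a formality. Without it you must either establish interior continuity first, or run the balayage with upper-semicontinuous data by approximating $u|_{\partial B}$ from above by continuous functions and using the continuity of the Monge--Amp\`ere operator under decreasing limits; some such argument has to be supplied. The remaining steps (psh-ness of the envelope, the gluing lemma granted $\tilde u\ge u$ with matching boundary values, and uniqueness via the comparison principle) are sound.
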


 One can similarly consider the inhomogeneous Monge-Amp\`ere Equation, in which one seeks a solution to $MA(u)= dV$ where $dV$ is a given smooth volume form.  Through the work of Caffarelli, Kohn, Nirenberg, Spruck \cite{Caf1,Caf2,Caf3} it is known that, as long as $U$ is strictly pseudoconvex, if $\phi$ is smooth then the solution to the inhomogeneous problem with boundary data $\phi$ is also smooth.  However for the homogeneous case that we are interested in the answer is more subtle.  


\begin{theorem}[Krylov] \label{thm:locreg}
Assume $U$ is a smoothly bounded and strictly pseudoconvex domain in $\mathbb C^n$.  If $\phi\in \mathcal{C}^{\infty}(\partial U)$ then the solution $u$ to the HMAE with boundary data $\phi$ lies in $\mathcal{C}^{1,1}(\overline{U})$.
\end{theorem}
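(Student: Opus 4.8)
The plan is to obtain the estimate by a priori bounds on a nondegenerate approximation. For $\epsilon\in(0,1]$ I would consider the inhomogeneous Dirichlet problem $\MA(u_\epsilon)=\epsilon\, dV$ on $U$ with $u_\epsilon=\phi$ on $\partial U$, where $dV$ is a fixed smooth strictly positive volume form. Since $U$ is strictly pseudoconvex with smooth boundary and $\phi$ is smooth, the Caffarelli-Kohn-Nirenberg-Spruck theory produces a unique solution $u_\epsilon\in\mathcal{C}^\infty(\overline U)\cap\Psh(U)$. The aim is then to bound $\|u_\epsilon\|_{\mathcal{C}^{1,1}(\overline U)}$ independently of $\epsilon$ and to let $\epsilon\downarrow 0$, recovering the Perron-Bremermann envelope as the limit.

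The zeroth- and first-order estimates are comparatively routine. Uniform $\mathcal{C}^0$ bounds follow from the comparison principle, squeezing $u_\epsilon$ between fixed sub- and supersolutions with boundary value $\phi$. For the gradient, the tangential derivatives on $\partial U$ are controlled directly by $\|\phi\|_{\mathcal{C}^1}$, while the normal derivative—and hence a global gradient bound via an interior maximum principle—comes from plurisubharmonic barriers at each boundary point. Here strict pseudoconvexity is essential: a defining function $\rho<0$ on $U$ can be chosen with $dd^c\rho$ strictly positive near $\partial U$, and combinations of the form $A\rho$ plus local affine data furnish upper and lower barriers whose slopes bound $|\nabla u_\epsilon|$ on $\partial U$ uniformly in $\epsilon$.

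The crux is the uniform bound on the full real Hessian $D^2u_\epsilon$, and this is the \emph{main obstacle}. One first estimates second derivatives on $\partial U$: the tangential-tangential derivatives are read off from differentiating $u_\epsilon=\phi$ twice along $\partial U$; the tangential-normal derivatives are handled by applying the maximum principle to the linearized operator acting on the first derivatives, again using the pseudoconvex barriers; and the pure double-normal derivative is the delicate term, bounded by exploiting the equation $\log\det(\partial_i\bar\partial_j u_\epsilon)=\log(\epsilon\,\varphi)$ together with strict pseudoconvexity to prevent the normal-normal complex derivative from blowing up. Once the Hessian is controlled on $\partial U$, the interior bound follows from a maximum-principle argument: differentiating the equation twice in an arbitrary constant direction $\xi$ and using the concavity of $A\mapsto\log\det A$ on positive Hermitian matrices shows the linearized operator applied to $\partial_\xi\partial_\xi u_\epsilon$ is nonnegative, so $\sup_\xi\partial_\xi\partial_\xi u_\epsilon$ attains no interior maximum and is governed by its boundary values. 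The decisive point is that none of the resulting constants degenerate as $\epsilon\to 0$; this is exactly why one obtains $\mathcal{C}^{1,1}$ and no better, since the concavity estimate bounds the size of the second derivatives but yields no modulus of continuity for them—consistent with the irregular examples studied later in this paper.

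Finally I would pass to the limit. By the comparison principle $\{u_\epsilon\}$ is monotone increasing as $\epsilon\downarrow 0$, and each $u_\epsilon$ is plurisubharmonic with boundary value $\phi$, hence a competitor in the envelope \eqref{eq:perron}. The uniform $\mathcal{C}^{1,1}$ bound gives, via Arzel\`a-Ascoli, convergence in $\mathcal{C}^{1,\beta}(\overline U)$ for every $\beta<1$ to a limit $u_0$, which is plurisubharmonic with $\MA(u_0)=0$ (using Bedford-Taylor continuity of the Monge-Amp\`ere operator under the monotone convergence) and boundary value $\phi$. By the uniqueness in Theorem \ref{thm:locdir}, $u_0$ coincides with the Perron-Bremermann envelope $u$. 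Since a uniform bound on second difference quotients is preserved under $\mathcal{C}^1$-convergence, $u$ inherits the $\mathcal{C}^{1,1}$ estimate, completing the proof.
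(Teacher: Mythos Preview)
The paper does not give its own proof of this theorem; it is stated with attribution to Krylov and a pointer to \cite{Guedjbook} for a detailed exposition (see the Bibliographical Remarks at the end of \S\ref{Sect:Dir}). So there is no in-paper argument to compare against.

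Your sketch follows the standard route one finds in those references: regularize via the nondegenerate problem $\MA(u_\epsilon)=\epsilon\,dV$, invoke Caffarelli--Kohn--Nirenberg--Spruck to get $u_\epsilon\in\mathcal{C}^\infty(\overline U)$, establish a priori $\mathcal{C}^{1,1}$ bounds uniform in $\epsilon$, and pass to the limit by monotonicity and Bedford--Taylor continuity. The outline is sound and you have correctly identified the hierarchy of difficulties---the double-normal boundary estimate being the delicate step, and concavity of $\log\det$ driving the interior second-derivative bound.

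One caution: the sentence ``none of the resulting constants degenerate as $\epsilon\to 0$'' is doing all the work, and is precisely the content of Krylov's theorem rather than a consequence of the steps you have listed. The CKNS boundary estimates, taken verbatim, do depend on a positive lower bound for the right-hand side; Krylov's insight is a sharper organization of the boundary double-normal argument (and the interior maximum-principle step) so that only an \emph{upper} bound on the right-hand side enters. If you were to flesh this out, that is where the substance lies. Otherwise the proposal is an accurate high-level account of the proof the paper is citing.
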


The next example shows that this regularity result is optimal.

\begin{example} \label{ex:gamelin}
Let $U:=\mathbb{B}$ be the unit ball in $\mathbb{C}^2$ and for $(z,w)\in \partial \mathbb{B}$ let $\phi(z,w):=(|z|^2-1/2)^2=(|w|^2-1/2)^2$. It can then be easily checked that the solution to the Dirichlet problem is given by $$u(z,w)=(\max\{0, |z|^2-1/2, |w|^2-1/2\})^2.$$ This clearly fails to be $\mathcal{C}^2$ along the hypersurfaces $|z|^2=1/2$ and $|w|^2=1/2$.
\end{example}

\subsection{Regular solutions, Monge-Amp\`ere foliations and proper harmonic curves}

\begin{definition}
We say that a solution $u$ to Dirichlet problem \eqref{eq:hmaelocal} is \emph{regular} if $u\in\mathcal{C}^{\infty}(\overline{U})$ and if at every point of $U$ the kernel of $dd^cu$ has complex dimension 1.
\end{definition}

\begin{definition}
Let $u$ be a solution to \eqref{eq:hmaelocal}.   A subset of $U$ is called a \emph{proper harmonic curve} of  $u$ if it is the image of a proper holomorphic map $f:\Sigma\to U$ from a Riemann-surface $\Sigma$ such that $u\circ f$ is harmonic on $\Sigma$. 
\end{definition}

If $u$ is regular, the kernel of $dd^cu$ defines a one-dimensional distribution which turns out to be integrable, and so by Frobenius Integrability Theorem yields a foliation of $U$ whose leaves are proper harmonic curves.   This is known as the associated \emph{Monge-Amp\`ere foliation}.  Thus  a way to gauge the failure of regularity of a solution is to see how far the set of proper harmonic curves is from foliating the domain. 

Returning to Example \ref{ex:gamelin} one easily checks that the set of proper harmonic curves consists of the discs $\{(z,c)\in \mathbb{B}\}$ and $\{(c,w)\in \mathbb{B}\}$ for $1/2\leq |c|^2<1$ together with the discs $\{(z,cz)\in \mathbb{B}\}$ for $|c|=1$. Interestingly, even though this set of discs is far from foliating the domain $\mathbb{B}$ their boundary circles do foliate $\partial \mathbb{B}$. In particular $\partial \mathbb{B}$ is contained in the closure of the union of proper harmonic curves. We are not aware of any examples of solutions to the above Dirichlet problem where this is not the case.

A related but different issue is that of finding local harmonic discs, i.e. non-trivial but not necessarily proper holomorphic discs along which $u$ is harmonic. Indeed looking at Example \ref{ex:gamelin} it is clear that through any point in $\mathbb{B}$ passes at least one local harmonic disc. However, an interesting construction of Sibony shows that this not always has to be the case (for the details see \cite[Sect. 3.5.1]{Guedjbook} and references therein).   

\subsection{Pluricomplex Green functions} \label{sect:pluricomplex}

Another manifestation of the HMAE comes through the so-called pluricomplex Green function.  Let $U$ be a smoothly bounded strictly pseudoconvex domain in $\mathbb{C}^n$ and fix a point $z_0\in U$. 

\begin{definition}
The \emph{pluricomplex Green function} of $U$ with singularity at $z_0$ is defined as $$u_{U,z_0}:={\sup}^*\{v\in \Psh(U): v\leq 0, \nu_{z_0}(v)\geq 1\}.$$
\end{definition}
Above $\nu_{z_0}(v)$ denotes the Lelong number of $v$ at the point $z_0$, defined by
$$ \nu_{z_0}(v) = \sup\{ t : v \le t \ln |z-z_0|^2 + O(1)\}$$
(we refer the reader to \S \ref{sec:lelong} for more on Lelong numbers).

\begin{theorem}[Demailly, B\l ocki] \label{thm:pluricomplex}
The pluricomplex Green function $u_{U,z_0}$ solves the HMAE on $U\setminus \{z_0\}$ and is $\mathcal{C}^{1,1}$ on $\overline{U}\setminus \{z_0\}$.
\end{theorem}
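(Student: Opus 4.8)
The plan is to establish the result in two stages: first the pluricomplex Green function $u_{U,z_0}$ solves the HMAE away from the singularity, and then the $\mathcal{C}^{1,1}$ regularity. For the first part, the strategy is to reduce to the Perron-Bremermann theory of Theorem~\ref{thm:locdir}. One knows that $u_{U,z_0}$ has a logarithmic pole at $z_0$, so near $z_0$ it behaves like $\ln|z-z_0|^2$, and on a small sphere $\partial B(z_0,\epsilon)$ it is comparable to the constant $\ln \epsilon^2$. The key observation is that on the annular domain $U\setminus \overline{B(z_0,\epsilon)}$, the function $u_{U,z_0}$ is the envelope of competitors with boundary values $0$ on $\partial U$ and its own restriction on $\partial B(z_0,\epsilon)$; by the comparison/balayage characterization of envelopes, $u_{U,z_0}$ is maximal there, which is exactly $\MA(u_{U,z_0})=0$. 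I would phrase this cleanly by noting that any envelope $\sup^*$ of a family of plurisubharmonic functions constrained only by an upper barrier is maximal (satisfies the HMAE) on the open set where it stays below that barrier; since the only genuine constraint here is the Lelong number condition at the single point $z_0$, maximality holds on all of $U\setminus\{z_0\}$.

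For the second part, the $\mathcal{C}^{1,1}$ regularity up to the boundary on $\overline{U}\setminus\{z_0\}$, I would follow Demailly's construction of the Green function combined with B\l ocki's interior estimates. Demailly's approach realizes $u_{U,z_0}$ as a suitable envelope and shows it has the correct logarithmic singularity; the smoothness of $\partial U$ and strict pseudoconvexity provide good boundary barriers. The $\mathcal{C}^{1,1}$ bound is then obtained by adapting the Krylov-type estimate of Theorem~\ref{thm:locreg}: away from the pole, one freezes the singularity by subtracting a smooth model with the correct pole and reduces to a Dirichlet problem with $\mathcal{C}^{1,1}$ boundary data on a shrunken strictly pseudoconvex domain $U\setminus \overline{B(z_0,\epsilon)}$. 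The essential mechanism is the existence of good plurisubharmonic sub- and super-barriers at each boundary point, which control the second-order behavior and yield a uniform bound on the real Hessian.

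The main obstacle I anticipate is the regularity near the boundary $\partial U$ and, more delicately, uniform control as one approaches the pole $z_0$ from outside any fixed ball. The statement asserts $\mathcal{C}^{1,1}$ on all of $\overline{U}\setminus\{z_0\}$, not merely on $U\setminus\{z_0\}$ or on a fixed compact subset, so the constants in the Hessian estimate must be shown to be locally bounded but are allowed to blow up as one tends to $z_0$. Handling the interaction between the logarithmic pole and the $\mathcal{C}^{1,1}$ estimate is the technical heart: one must verify that subtracting the singular model preserves strict pseudoconvexity of the modified data and that B\l ocki's interior Laplacian estimates combine with boundary regularity to give a full (not merely interior) $\mathcal{C}^{1,1}$ bound. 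I expect this gluing of interior and boundary estimates, and the careful tracking of how the pole affects the barrier construction, to be where essentially all the real work lies, with the maximality assertion being comparatively soft.
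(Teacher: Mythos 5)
A preliminary remark on context: the paper contains no proof of Theorem~\ref{thm:pluricomplex}. Being a survey, it attributes the maximality statement to Demailly \cite{Demailly87} and the $\mathcal{C}^{1,1}$ regularity to B\l ocki \cite{Blocki1}, so your proposal has to be judged against those arguments rather than against anything in the text. Your first part is essentially Demailly's argument and is sound: the balayage principle (replace a competitor on a small ball $B\subset\subset U\setminus\{z_0\}$ by the Perron--Bremermann solution with its own boundary values; the modification is still $\le 0$ by the maximum principle and is unchanged near $z_0$, hence still a competitor) shows the envelope is maximal wherever the obstacle is inactive, and since $u_{U,z_0}<0$ throughout $U$ (maximum principle) this gives $\MA(u_{U,z_0})=0$ on $U\setminus\{z_0\}$. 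The one point you should not gloss over is that the $\sup^*$-regularized envelope is itself a competitor, i.e.\ that the Lelong-number constraint survives regularization with a uniform $O(1)$ term; this needs a small barrier argument of the type used in the proof of Proposition~\ref{prop:HSlelong}(3c), but it is routine.

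The second part has a genuine gap, and it sits exactly where the real content of the theorem lies. You propose to reduce to a Dirichlet problem ``on a shrunken strictly pseudoconvex domain $U\setminus\overline{B(z_0,\epsilon)}$'' and then invoke the Krylov estimate of Theorem~\ref{thm:locreg}. But for $n\ge 2$ the annular domain $U\setminus\overline{B(z_0,\epsilon)}$ is \emph{not} strictly pseudoconvex --- it is not pseudoconvex at all, since its inner boundary $\partial B(z_0,\epsilon)$ is strictly pseudoconcave when viewed from the annulus (holomorphic functions extend across the hole). Theorem~\ref{thm:locreg} therefore does not apply, and no choice of $\epsilon$ repairs this. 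There is a second, independent problem: the boundary data you would prescribe on the inner sphere is the restriction of $u_{U,z_0}$ itself, which at this stage is only known to be continuous, so it cannot be fed into any regularity theorem requiring smooth (or even $\mathcal{C}^{1,1}$) data --- the argument is circular. (Note also that subtracting a smooth model with the correct pole destroys plurisubharmonicity, so ``freezing the singularity'' cannot literally be done by subtraction.) These obstructions are precisely why the $\mathcal{C}^{1,1}$ regularity is a separate, hard theorem rather than a corollary of Krylov. The known proofs replace pseudoconvexity of the annular region by the existence of a smooth strictly plurisubharmonic \emph{subsolution} with the correct boundary behaviour (roughly, a regularized maximum of $K\rho$, with $\rho$ a strictly plurisubharmonic defining function of $U$, and $\ln|z-z_0|^2-C$, which matches the logarithmic inner data), in the manner of Guan \cite{Guan1,Guan2}, combined with uniform second-order estimates for nondegenerate approximating Monge--Amp\`ere equations; B\l ocki \cite{Blocki1} gives the argument yielding $\mathcal{C}^{1,1}$ on all of $\overline{U}\setminus\{z_0\}$. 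This subsolution idea is the ingredient missing from your proposal.
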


We say that the pluricomplex Green function is \emph{regular} if it is $\mathcal{C}^{\infty}$ on $\overline{U}\setminus \{z_0\}$ and the kernel of $dd^cu$ one-dimensional on $\overline{U}\setminus \{z_0\}$. Given a regular solution, the corresponding Monge-Amp\`ere foliation will then consist of holomorphic curves attaching to $\partial U$ and by the maximum principle all will pass through the point $z_0$. 

It was shown by Lempert \cite{Lempert} that the pluricomplex Green function is regular when the domain $U$ is smoothly bounded and strictly convex. The discs of the corresponding Monge-Amp\`ere foliation contain important information of the domain $U$. 

For $z,w\in U$, the Kobayashi distance between $z$ and $w$, denoted by $\delta_K(z,w)$, is defined as the infimum of the Poincar\'e distance between pairs $x$ and $y$ in $\mathbb{D}$ over all holomorphic discs $f:\mathbb{D}\to U$ with $f(x)=z$ and $f(y)=w$. Such a disc connecting $z$ and $w$ is called \emph{extremal} if it realizes the Kobayashi distance. Lempert proves in \cite{Lempert} that when $U$ is smoothly bounded and strictly convex, for any $z,w\in U$ there exists an extremal disc (unique up to reparametrization) and that this is exactly the disc of the Monge-Amp\`ere foliation of $u_{U,z_0}$ that passes through $w$. We will discuss more of Lempert's proof in connection with the Donaldson Openness Theorem in \S \ref{sect:donopthm}.

In contrast, Bedford-Demailly \cite{BedfordDemailly} give examples of smoothly bounded strictly pseudoconvex domains with a pluricomplex Green function which is not $\mathcal{C}^2$ up to the boundary. It seems not to be known whether they also fail to be $\mathcal{C}^2$ in the interior of the domain.

\subsection{Bibliographical Remarks}


The reader interested in more comprehensive surveys on this topic is referred to Berndtsson \cite{Berndtsson2}, Guedj \cite{Guedjbook}, Guan \cite{Guansurvey}, Ko\l odziej \cite{Kol1,Kol2}, Phong-Song-Sturm \cite{PhongSongSturmSurvey} and Zeriahi \cite{Zeriahisurvey}.   The definition of the Perron-Bremermann envelope goes back to  \cite{Perron} and \cite{Bremermann}. That this envelope is continuous when $U$ is strictly pseudoconvex was proved by Walsh \cite{Walsh} who also gives examples in which this fails for more general $U$.  That the envelope is locally $\mathcal{C}^{1,1}$ when the domain is the unit ball was proved by Bedford-Taylor \cite{BedfordTaylor1}, where they also showed that for any smoothly bounded strictly pseudoconvex domain the solution was Lipschitz up to the boundary. The full statement of Theorem \ref{thm:locreg} (namely that the solution is $\mathcal{C}^{1,1}$ all the way up to the boundary) is due to Krylov \cite{Krylov} (see also \cite{Guedjbook} for a detailed exposition of Krylov's proof).

Example \ref{ex:gamelin} is due to Gamelin and Sibony (see \cite{Gamelin} and also \cite[Ex. 2.13]{Guedjbook}).   The study of Monge-Amp\`ere foliations goes back to the work of Bedford-Kalka \cite{BedfordKalka}. 

Pluricomplex Green functions were introduced by Klimek \cite{Klimek2} and independently by Zakharyuta \cite{Zakharyuta}. The part of Theorem \ref{thm:pluricomplex} which says that the pluricomplex Green functions solves the HMAE was first proved by Demailly \cite{Demailly87}, while the $\mathcal{C}^{1,1}$-regularity is due to Blocki \cite{Blocki1}.  More on the pluricomplex Green function and its applications can be found, for instance, in \cite{BlockiBergman, Guan1, Guan2,Herbort2, Herbort1,Lempert4}.

\section{The HMAE for compact K\"ahler manifolds}

\subsection{Weak and Regular Solutions}

Suppose now that $(X,\omega)$ is a compact K\"ahler manifold (without boundary) and $\Sigma$ is a compact Riemann-surface with non-empty smooth boundary.   Let $\phi\in\mathcal{C}^{\infty}(X\times \partial \Sigma)$ be chosen so $\phi(\cdot,\tau)\in \mathcal K(X,\omega)$ for each $\tau \in \partial \Sigma$.  Letting $\pi_X:X\times \Sigma\to X$ be the projection, we denote by $\Psh(X\times \Sigma, \pi_X^*\omega)$  the space of functions that are $\pi_X^*\omega$-plurisubharmonic on the interior of $X\times \Sigma$.    

\begin{definition}
We say $\Phi\in \Psh(X\times \Sigma, \pi_X^*\omega)\cap \mathcal{C}^0(X\times \Sigma)$ solves the HMAE with boundary data $\phi$ if
\begin{align}
     MA_{\pi^*\omega}(\Phi)&= 0,\label{eq:hmae}\\
    \lim_{z\to \zeta}\Phi(z,\zeta) &= \phi(\zeta) \text{ for all } \zeta \in X\times \partial \Sigma.\nonumber
\end{align}
\end{definition}
By analogy with Perron-Bremermann  envelope set
$$\Phi:={\sup}^*\{\Psi\in \Psh(X\times \Sigma,\pi_X^*\omega): \limsup_{z\to\zeta}\Psi(z)\leq \phi(\zeta) \text{ for  } \zeta\in X\times \partial \Sigma\}$$

We assume henceforth that $\Sigma$ is compact and carries a smooth strictly plurisubharmonic function $\chi$ such that $\chi|_{\partial \Sigma}=0$ (in fact we will mostly be concerned when $\Sigma$ is either a disc or an annulus in which case this clearly holds).    Using this, the following is proved almost exactly as in the local case  (see, for instance \cite[Ch.\ 1,7]{Guedjbook}).

\begin{proposition} \label{prop:env3}
The envelope $\Phi$ is the unique solution to \eqref{eq:hmae}.
\end{proposition}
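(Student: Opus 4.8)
The plan is to run the Perron method exactly as for a strictly pseudoconvex domain, with the strict plurisubharmonicity of $\chi$ compensating for the degeneracy of $\pi_X^*\omega$ in the $\Sigma$-directions. Throughout let $\pi_\Sigma:X\times\Sigma\to\Sigma$ be the second projection and let $\mathcal F$ denote the family of competitors in the definition of $\Phi$. The first step is to produce a global subsolution. Fix a smooth extension $\tilde\phi$ of $\phi$ to $X\times\Sigma$ with $\tilde\phi(\cdot,\tau)\in\mathcal K(X,\omega)$ for every $\tau$ (possible since $\mathcal K(X,\omega)$ is open and $\partial\Sigma$ is compact), and set $\Psi_0:=\tilde\phi+A\,\pi_\Sigma^*\chi$. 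For $A$ large the term $A\,\pi_\Sigma^*dd^c\chi$ dominates the mixed and $\Sigma$-direction contributions of $dd^c\tilde\phi$ while $\pi_X^*\omega+dd^c_X\tilde\phi>0$ controls the $X$-directions, so $\Psi_0\in\Psh(X\times\Sigma,\pi_X^*\omega)$; since $\chi|_{\partial\Sigma}=0$ we have $\Psi_0=\phi$ on $X\times\partial\Sigma$, whence $\Psi_0\in\mathcal F$ and $\mathcal F\ne\varnothing$.

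For an upper bound I would exploit that $\pi_X^*\omega$ vanishes on each slice $\{x\}\times\Sigma$: every $\Psi\in\mathcal F$ restricts there to a subharmonic function with boundary values at most $\phi(x,\cdot)$, so by the maximum principle on $\Sigma$ one has $\Psi(x,\cdot)\le P[\phi(x,\cdot)]$, where $P[\phi(x,\cdot)]$ denotes the harmonic extension of the continuous data $\phi(x,\cdot)$ to $\Sigma$. Hence $\Phi\le P[\phi]$ is bounded and, the family being locally uniformly bounded above, the Brelot--Cartan lemma gives $\Phi\in\Psh(X\times\Sigma,\pi_X^*\omega)$. The two bounds already pin down the boundary values: at $\zeta\in X\times\partial\Sigma$ one has $\Psi_0\le\Phi\le P[\phi]$ with $\Psi_0$ and $P[\phi]$ continuous and equal to $\phi$ on the boundary, so $\lim_{z\to\zeta}\Phi(z)=\phi(\zeta)$.

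Interior continuity of $\Phi$ then follows from Walsh's argument (upper semicontinuity is automatic from the $\sup^*$, and lower semicontinuity is obtained from the boundary barriers together with the standard translation/comparison estimate, using the uniform continuity of $\phi$ and compactness). Granting continuity, $MA_{\pi_X^*\omega}(\Phi)=0$ is the usual balayage step: on a small coordinate ball $B\subset X\times\Sigma$, write $\pi_X^*\omega=dd^c v$ and apply Theorem~\ref{thm:locdir} to solve the local HMAE for $\Phi+v$ with boundary data $(\Phi+v)|_{\partial B}$; the resulting solution $W$ satisfies $W\ge\Phi+v$ on $B$ with equality on $\partial B$, so gluing $w:=W-v$ into $\Phi$ yields a member of $\mathcal F$, and maximality of $\Phi$ forces $\Phi=w$ on $B$, i.e.\ $MA_{\pi_X^*\omega}(\Phi)=0$ there.

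Uniqueness is the maximality property encoded by the Bedford--Taylor comparison principle: a bounded $\pi_X^*\omega$-psh function with vanishing Monge--Amp\`ere measure dominates every $\pi_X^*\omega$-psh function with the same boundary data. Applying this to two solutions $\Phi_1,\Phi_2$ of \eqref{eq:hmae}, each dominates the other, so $\Phi_1=\Phi_2$. The step I expect to be the main obstacle is the boundary and continuity analysis: verifying that $\Psi_0$ is genuinely $\pi_X^*\omega$-psh (absorbing the mixed derivatives of $\tilde\phi$ into the $\chi$-term) and running Walsh's continuity argument in this parametrized setting, where one must combine the strict positivity of $\pi_X^*\omega$ along $X$ with the subharmonic barrier $\chi$ along $\Sigma$ to recover the transverse strict pseudoconvexity that the local theory of Theorem~\ref{thm:locdir} presupposes.
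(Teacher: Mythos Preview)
Your proposal is correct and is precisely the argument the paper has in mind: the paper gives no details at all, merely asserting that ``the following is proved almost exactly as in the local case (see, for instance \cite[Ch.\ 1,7]{Guedjbook}),'' with the assumed strictly plurisubharmonic exhaustion $\chi$ on $\Sigma$ serving exactly the role you give it. Your construction of the barrier $\Psi_0=\tilde\phi+A\,\pi_\Sigma^*\chi$, the slicewise harmonic upper bound $P[\phi]$, Walsh continuity, balayage, and the comparison principle together constitute the standard proof; the only step whose justification is slightly compressed is the existence of a smooth extension $\tilde\phi$ with $\tilde\phi(\cdot,\tau)\in\mathcal K(X,\omega)$ for \emph{every} $\tau\in\Sigma$ (openness of $\mathcal K(X,\omega)$ only gives this near $\partial\Sigma$), but this is easily repaired by cutting off, e.g.\ replacing $\tilde\phi$ by $\rho(\tau)\tilde\phi$ for a suitable cutoff $\rho$ and noting that $(1-\rho)\omega+\rho(\omega+dd_X^c\tilde\phi)>0$.
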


We are thus justified in calling $\Phi$ the \emph{weak solution} to the HMAE with boundary data $\phi$.   The following statement (proved recently by Chu-Tosatti-Weinkove \cite{ChuTossatiWeinkove}) is the optimal regularity that one can expect in general.

\begin{theorem}\label{thm:C11}
Let $\phi\in\mathcal{C}^{\infty}(X\times \partial \Sigma)$ be chosen so $\phi(\cdot,\tau)\in \mathcal K(X,\omega)$ for each $\tau \in \partial \Sigma$. Then the weak solution $\Phi$ to the HMAE with boundary data  $\phi$ lies in $\mathcal{C}^{1,1}(X\times\Sigma)$.
\end{theorem}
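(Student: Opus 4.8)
The plan is to establish the $\mathcal{C}^{1,1}$ estimate through the standard a priori estimate plus approximation strategy, working on the total space $M := X \times \Sigma$ with the degenerate reference form $\pi_X^*\omega$. Since $\pi_X^*\omega$ is only semipositive (it has a one-dimensional kernel in the $\Sigma$-direction), the first step is to regularize the equation: I would introduce the family of strictly positive forms $\omega_\epsilon := \pi_X^*\omega + \epsilon\, \pi_\Sigma^* \omega_\Sigma$ for a fixed K\"ahler form $\omega_\Sigma$ on $\Sigma$, and solve the \emph{non-degenerate} Dirichlet problems $\MA_{\omega_\epsilon}(\Phi_\epsilon) = \epsilon^{n+1} f \, dV$ (for a suitable smooth positive $f$, chosen so the right-hand side is compatible with the boundary data and the total mass) with $\Phi_\epsilon = \phi$ on $X \times \partial\Sigma$. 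By the Caffarelli--Kohn--Nirenberg--Spruck theory alluded to after Theorem~\ref{thm:locdir}, each $\Phi_\epsilon$ is smooth up to the boundary provided we can verify strict pseudoconvexity of $M$ near the boundary; here the function $\chi$ on $\Sigma$ (pulled back to $M$) supplies the required plurisubharmonic defining function, so $M$ is strictly pseudoconvex along $X \times \partial\Sigma$.

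The heart of the matter is then to prove a priori estimates that are \emph{uniform in} $\epsilon$. The $\mathcal{C}^0$ estimate follows from the comparison principle and the construction of barriers built from $\phi$ and $A\chi$ for large $A$. The genuinely hard part is the uniform second-order (i.e. $\mathcal{C}^{1,1}$) estimate, and this splits into an interior estimate and a boundary estimate. For the boundary estimate I would construct upper and lower barriers adapted to the geometry near $X \times \partial\Sigma$, bounding the double normal, normal-tangential, and double tangential components of the real Hessian separately; the double-normal bound is the most delicate and is typically recovered from the equation itself once the other components are controlled. For the interior estimate one applies the maximum principle to a quantity of the form $\log \lambda_{\max}(\nabla^2 \Phi_\epsilon) + $ (lower-order correction terms), differentiating the equation twice and using the concavity of $\log\det$; the curvature terms arising from the twisting of $M$ and from $\omega$ must be absorbed, which is where the Chu--Tosatti--Weinkove argument is genuinely new. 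I would cite \cite{ChuTossatiWeinkove} for this interior estimate rather than reproduce it, since controlling the bad third-order terms in the degenerate limit is exactly the technical innovation there.

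Having obtained $\|\Phi_\epsilon\|_{\mathcal{C}^{1,1}(M)} \le C$ independent of $\epsilon$, the final step is to pass to the limit. By Arzel\`a--Ascoli (applied to the gradients, which are uniformly Lipschitz) a subsequence $\Phi_{\epsilon_j}$ converges in $\mathcal{C}^{1,\alpha}(M)$ for every $\alpha < 1$ to a limit $\tilde\Phi \in \mathcal{C}^{1,1}(M)$, with the second derivatives converging weakly-$*$ in $L^\infty$. Because the Monge--Amp\`ere operator is continuous along such decreasing (or uniformly bounded, Hessian-convergent) sequences in the Bedford--Taylor sense, the limit satisfies $\MA_{\pi_X^*\omega}(\tilde\Phi) = 0$ and attains the boundary data $\phi$. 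By the uniqueness in Proposition~\ref{prop:env3} we conclude $\tilde\Phi = \Phi$, so the weak solution itself lies in $\mathcal{C}^{1,1}(M)$, as claimed.

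I expect the principal obstacle to be the uniform interior second-order estimate: unlike the strictly positive case, here the reference form degenerates in the limit $\epsilon \to 0$, so one cannot simply quote the classical CKNS bounds, and the constant $C$ must be shown not to blow up as the ellipticity degenerates in the fibre direction. This is precisely the point at which the finer analysis of \cite{ChuTossatiWeinkove} is indispensable.
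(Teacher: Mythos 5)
There is a point of comparison you could not have known: the paper gives no proof of Theorem \ref{thm:C11} at all. It is a survey, and the theorem is stated as a result of Chu--Tosatti--Weinkove with a bare citation to \cite{ChuTossatiWeinkove}. Your proposal is, in outline, a sketch of the strategy of that cited work (elliptic regularization, uniform a priori estimates, passage to the limit, then uniqueness via Proposition \ref{prop:env3}), but since you invoke \cite{ChuTossatiWeinkove} as a black box for the decisive uniform interior bound on the real Hessian, your argument establishes nothing beyond what the citation already gives: the statement you are asked to prove \emph{is} the main theorem of that reference, so as a self-contained proof the proposal is circular, and as a citation it coincides with what the paper does.

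Beyond that framing issue, there is a concrete mathematical error in the step you treat as routine. The boundary of $M=X\times\Sigma$ is $X\times\partial\Sigma$, which is foliated by the compact complex hypersurfaces $X\times\{\tau\}$, $\tau\in\partial\Sigma$; it is therefore Levi-flat, and no strictly plurisubharmonic defining function can exist near it (a strictly psh function cannot even exist on a neighbourhood restricted to a compact positive-dimensional complex submanifold, by the maximum principle). The pullback of $\chi$ to $M$ is plurisubharmonic but degenerate in every $X$-direction, so it does not make $M$ strictly pseudoconvex along the boundary, and the Caffarelli--Kohn--Nirenberg--Spruck theory \cite{Caf1,Caf2} for strictly pseudoconvex domains cannot be invoked to produce smooth solutions $\Phi_\epsilon$ of the approximating Dirichlet problems. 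Existence and smoothness up to the boundary for these non-degenerate approximating equations in the Levi-flat product setting is itself a substantial theorem, due to Chen \cite{ChenI} with complements by B\l ocki \cite{Blocki2}, and the boundary second-order estimates there are obtained by arguments adapted to the product structure; in particular the strictly-pseudoconvex scheme you describe, where the double-normal derivative is recovered from the equation after the tangential components are controlled, has no direct analogue here, because the complementary direction is exactly the $\Sigma$-direction in which the reference form degenerates as $\epsilon\to 0$. (The paper itself flags this hypothesis, noting that the manifold-with-boundary versions of these results require weak pseudoconcavity or Levi-flat boundary.) So the two places where genuine work is required --- existence and boundary regularity of the $\Phi_\epsilon$, and the uniform interior Hessian bound --- are respectively mis-attributed to CKNS and delegated to the very paper the theorem comes from.
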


Observe that by hypothesis $\omega_\tau: = \omega + dd^c\phi(\cdot, \tau)$ is a K\"ahler form on $X$ for each $\tau\in \partial \Sigma$, and so if $\Phi$ solves \eqref{eq:hmae} then the $(1,1)$-current 
$$ \Omega: = \pi_X^* \omega + dd^c\Phi$$
solves the Dirichlet problem for the HMAE with boundary data $\{\omega_{\tau}\}_{\tau\in \partial \Sigma}$, as considered in the introduction.  Following Donaldson \cite{Donaldson} we make the following definition:

\begin{definition}\label{def:regularcompact}
 We say the weak solution $\Phi$ to the HMAE \eqref{eq:hmae} is \emph{regular} if it is smooth and  $\Phi(\cdot,\tau) \in \mathcal K(X,\omega)$ for all $\tau\in\Sigma$.
\end{definition}

Just as in the local case, a regular solution defines a foliation of $X\times \Sigma$.   In more detail, consider the associated form $\Omega: = \pi_X^*\omega + dd^c\Phi$.  By being the weak solution to the HMAE we have $\Omega\ge 0$ and $\Omega^{n+1}=0$ on $X\times \Sigma$.  On the other hand, if $\Phi$ is regular then $\Omega|_{X\times \{\tau\}}= \omega +dd^c\Phi(\cdot,\tau)$ is strictly positive for all $\tau \in \Sigma$.  Thus the kernel of $\Omega$ at each point of $X\times \Sigma$ is one-dimensional, and so gives a one-dimensional distribution.  Since $\Omega$ is closed the  distribution is integrable, and so by the Frobenius Integrability Theorem gives foliation of $X\times \Sigma$. The leaves are complex since $\Omega$ is of type $(1,1)$ and by construction $\Phi$ is $\pi_X^*\omega$-harmonic along these leaves.   As $\Omega|_{X\times \{\tau\}}$ is strictly positive, these leaves are necessarily transverse to the fibres over $\Sigma$.

If $\Phi$ is merely a weak solution then there is no reason to think such a foliation will exist.  However it can still happen that there are some transverse curves along which the weak solution is harmonic.

\begin{definition}
 Let $f:\Sigma\to X$ be proper and holomorphic.  We say the graph of $f$ is a proper \emph{harmonic curve} for the weak solution $\Phi$ to the HMAE if $\Phi\circ f$ is $\pi_X^*\omega$-harmonic.  If $\Sigma=\mathbb D$ is the unit disc we refer to this as a proper \emph{harmonic disc}.
\end{definition}

It is in general very hard to determine whether a certain weak solution to the HMAE is regular or not.  A trivial, but still sometimes useful, special case is when $\phi\in \mathcal C^{\infty}(X\times \partial \Sigma)$ is independent of the point in $\partial \Sigma$.  For then  $\Phi(z,\tau):= \phi(z)$ for $(z,\tau)\in X\times \Sigma$ is clearly a regular solution to the HMAE, whose Monge-Amp\`ere foliation is simply the horizontal slices $\{z\}\times \Sigma$ for $z\in X$.  We will see in the next section that this can be used to produce non-trivial examples of regular solutions.

\subsection{Donaldson's Openness Theorem} \label{sect:donopthm}

Suppose now that $\Sigma=\mathbb D$ is the unit disc in $\mathbb C$.  The following theorem says that the existence of regular solutions to the HMAE persists under small perturbations of the boundary data.

\begin{theorem}[Donaldson]\label{thm:donaldsonopenness}
Suppose the weak solution to the HMAE with boundary data $\phi\in \mathcal{C}^{\infty}(X\times \partial\mathbb{D})$ is regular.    Then for any $k\geq2$ and $0<\alpha\leq 1$ there is an $\epsilon>0$ such that if $g\in \mathcal{C}^{\infty}(X\times \partial\mathbb{D})$ has $\mathcal C^{k,\alpha}$-norm less than $\epsilon$ then the weak solution to the HMAE with boundary data $\phi+g$ is also regular.
\end{theorem}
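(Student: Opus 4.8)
The plan is to prove Donaldson's openness theorem via an implicit function theorem / continuity argument in appropriate Hölder spaces, using the Monge-Ampère foliation associated to the regular solution as the central geometric object.

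\medskip

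\emph{Setup and reformulation.} Suppose $\Phi_0$ is the regular solution with boundary data $\phi$, and let $\Omega_0 = \pi_X^*\omega + dd^c\Phi_0$ be the associated closed positive $(1,1)$-form on $X\times \mathbb D$ with $\Omega_0^{n+1}=0$ and $\Omega_0|_{X\times\{\tau\}}$ strictly positive for all $\tau\in\mathbb D$. By regularity, the kernel of $\Omega_0$ gives a smooth one-dimensional foliation $\mathcal F_0$ of $X\times\mathbb D$ by proper harmonic discs transverse to the fibres. The first step is to linearize the HMAE around $\Phi_0$. Writing $\Phi = \Phi_0 + \psi$, the equation $\Omega^{n+1}=0$ becomes, to leading order, a linear second-order equation for $\psi$ of the form $L\psi = 0$ where $L = (n+1)\,\Omega_0^n \wedge dd^c(\cdot)/\Omega_0^{n+1}$-type operator; more precisely $L$ is the trace of $dd^c\psi$ against the cofactor of $\Omega_0$. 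The crucial structural fact is that, because $\Omega_0$ has a one-dimensional kernel spanned by the leaf direction, this linearized operator $L$ degenerates precisely in the leaf directions: along each leaf of $\mathcal F_0$ it reduces to the Laplacian, while transverse to the leaves it is elliptic. Thus $L$ is a \emph{subelliptic} (degenerate elliptic) operator whose characteristic directions are exactly the leaves.

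\medskip

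\emph{The linearized problem and its solvability.} The heart of the argument is to show that the linearized Dirichlet problem $L\psi = h$ with boundary data $\psi|_{X\times\partial\mathbb D}=g$ is solvable in suitable function spaces, with good estimates. The strategy I would use is the one from Lempert's and Donaldson's circle of ideas: restrict to the foliation. Along each leaf $\ell$ (a proper disc), $L$ restricts to the Laplacian, and the Dirichlet problem on the disc with boundary values prescribed on $\partial\ell \subset X\times\partial\mathbb D$ is uniquely and smoothly solvable by classical harmonic function theory. This solves $\psi$ leaf-by-leaf. The subtlety is to control how this leafwise solution depends on the transverse parameter — i.e., to show that assembling the leafwise harmonic extensions yields a globally smooth function satisfying the full equation, and to obtain uniform $\mathcal C^{k,\alpha}$ estimates. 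This requires understanding the transverse regularity of the foliation and establishing a priori Schauder-type estimates for the degenerate operator $L$; one typically uses the fact that the leaves fibre the manifold smoothly over the quotient space to set up weighted or anisotropic Hölder norms adapted to the foliation. The main obstacle, and the technically hardest part, is precisely this: proving that $L$ is an isomorphism between the appropriate Banach spaces of $\mathcal C^{k,\alpha}$ functions (mod boundary data), i.e. that the linearized operator has trivial kernel and cokernel and is bounded below. Injectivity follows from the maximum principle (a solution of $L\psi=0$ with $\psi|_{\partial}=0$ must vanish, since it is harmonic and zero on the boundary of every leaf), but surjectivity and the closed-range estimate demand the careful subelliptic analysis adapted to $\mathcal F_0$.

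\medskip

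\emph{Conclusion via the implicit function theorem.} Once the linearized operator $L$ is shown to be an isomorphism $\mathcal C^{k+2,\alpha} \to \mathcal C^{k,\alpha}$ of the relevant spaces, I would set up the nonlinear map $F(\Phi) = MA_{\pi^*\omega}(\Phi)$ (together with the boundary trace) as a smooth map between Banach spaces and apply the implicit function theorem at $\Phi_0$, where $F(\Phi_0)=0$ and $DF_{\Phi_0}=L$ is invertible. This produces, for every boundary perturbation $g$ of sufficiently small $\mathcal C^{k,\alpha}$-norm, a nearby solution $\Phi_g$ of the \emph{inhomogeneous} problem $MA(\Phi_g)=0$ that is smooth and close to $\Phi_0$ in $\mathcal C^{k+2,\alpha}$. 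Since strict positivity of $\Phi_0(\cdot,\tau)$ on fibres is an open condition, for $g$ small the perturbed solution $\Phi_g$ still has $\Phi_g(\cdot,\tau)\in\mathcal K(X,\omega)$ for all $\tau\in\mathbb D$, so $\Phi_g$ is regular. Finally, by uniqueness of the weak solution (Proposition \ref{prop:env3}), the regular solution $\Phi_g$ I have constructed must coincide with the weak solution for boundary data $\phi+g$, which is therefore regular. Bootstrapping via Krylov-type or Schauder estimates upgrades $\mathcal C^{k+2,\alpha}$ regularity to $\mathcal C^\infty$, completing the proof.
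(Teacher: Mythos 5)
Your overall strategy (construct a regular solution near $\Phi_0$ by an implicit function theorem, note that fibrewise positivity is open, then invoke uniqueness of the weak solution, Proposition \ref{prop:env3}, to identify your solution with the Perron--Bremermann envelope) is the right outline, but the step you yourself flag as the ``technically hardest part'' is not merely hard --- it is false, and this is a genuine gap. Linearizing the HMAE at $\Phi_0$ gives $L\psi=\Omega_0^n\wedge dd^c\psi$, which at each point is (up to the positive factor $\lambda_1\cdots\lambda_n$) the complex Laplacian \emph{in the leaf direction only}: it is elliptic along the leaves of the Monge--Amp\`ere foliation and identically zero in all transverse directions (your description has this reversed --- the operator does not ``degenerate in the leaf directions'' nor is it ``elliptic transverse to the leaves''). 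Moreover it is not subelliptic: the characteristic distribution is the tangent distribution of the foliation, which is integrable, so H\"ormander's bracket condition fails and there is no hypoelliptic gain. Consequently, solving $L\psi=h$ leaf-by-leaf gains two derivatives along each leaf but \emph{nothing} transversally, so the image of $\mathcal C^{k+2,\alpha}$ under $L$ is a proper, non-closed subspace of $\mathcal C^{k,\alpha}$ and $L$ is not an isomorphism $\mathcal C^{k+2,\alpha}\to\mathcal C^{k,\alpha}$; the inverse loses two transverse derivatives. The Banach-space implicit function theorem therefore cannot be applied, and the defect cannot be repaired by passing to foliation-adapted anisotropic H\"older spaces (the nonlinear operator $\Phi\mapsto(\pi_X^*\omega+dd^c\Phi)^{n+1}$ involves transverse second derivatives, which such spaces do not control with any gain), nor readily by Nash--Moser (at nearby non-solutions $\Phi$ there is no foliation, and the linearized operator changes type, so there is no family of tame inverses).

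This loss of derivatives is exactly why Donaldson's actual argument --- the one sketched in \S\ref{sect:donopthm} --- does not linearize the PDE at all. Instead, following Semmes, one passes to the holomorphic fibre bundle $W_X\to X$ with its holomorphic $2$-form, encodes the boundary data $\phi$ as a family of LS-submanifolds $\Lambda_\tau\subset W_X$, $\tau\in\partial\mathbb D$, and shows that a regular solution is equivalent to a smooth family of holomorphic discs $g_x:\mathbb D\to W_X$ with $g_x(\tau)\in\Lambda_\tau$ for $\tau\in\partial\mathbb D$ and with $x\mapsto g_x(\tau)$ a diffeomorphism for each $\tau$. Deforming holomorphic discs whose boundaries lie in a (perturbed) totally real submanifold is a Riemann--Hilbert problem which \emph{is} genuinely elliptic and Fredholm, and there the implicit function theorem applies: small $\mathcal C^{k,\alpha}$ perturbations of $\phi$ perturb the $\Lambda_\tau$, the disc family persists, and the diffeomorphism property is open, yielding Theorem \ref{thm:donaldsonopenness}. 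In short, the geometric reformulation is not a stylistic choice but the mechanism that replaces your degenerate operator $L$ by an elliptic boundary-value problem; without it, the IFT scheme you propose cannot close.
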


Very roughly, this result is obtained by casting the problem of deforming the harmonic discs of a Monge-Amp\`ere foliation as an elliptic problem.  Actually is it not precisely these discs that are used, but instead discs from an auxiliary construction that we now sketch.

Let $\Theta:=\Theta_1+i\Theta_2$ be a holomorphic $2$-form on a complex manifold $W$, where $\Theta_1$ and $\Theta_2$ are real symplectic forms. A (real) submanifold $V$ of $W$ is said to be an \emph{LS-submanifold} if it is Lagrangian with respect to $\Theta_1$ (i.e. $\Theta_1$ vanishes along $W$) while being symplectic with respect to $\Theta_2$ (i.e. $\Theta_2$ restricts to a symplectic form on $W$).

Semmes \cite{Semmes} and Donaldson \cite{Donaldson} show that given a compact K\"ahler manifold $(X,\omega)$ there exists a holomorphic fibre bundle $\pi:W_X\to X$ with holomorphic $2$-form $\Theta$ such that K\"ahler forms in the same cohomology class as $\omega$ correspond to $LS$ submanifolds in $W_X$.  Roughly speaking, $W$ is constructed as follows.  If $\omega$ has a local potential $u$ on some open set $U$ we identify $W_U$ with the $(1,0)$-part of the complexified cotangent bundle of $U$. If $z_i$ are local holomorphic coordinates any $(1,0)$-form  can be written as $\sum_i\zeta_idz_i$, thus $(\zeta_i,z_i)$ are local holomorphic coordinates on $W_U$ and locally $\Theta:=\sum_id\zeta_i\wedge dz_i$. 

If $V$ is another open set where $\omega$ has the local potential $v$, then over $U\cap V$  the transition function of the fibre bundle $W_X$ is set to be $\partial(v-u)$. Thus there is a global section of $W_X$, locally given by $\partial u$. By a simple calculation, the graph of this section is a LS-submanifold.   Any K\"ahler form cohomologous to $\omega$ comes from a K\"ahler potential $\phi\in \mathcal K(X,\omega)$, whose corresponding LS-submanifold is locally given by the graph of $\partial(u+\psi)$.  Moreover, as Donaldson shows, in \cite{Donaldson}, any closed LS-manifold in $W_X$ in the isotopy class of $\partial u$ arises this way.

Now let $\phi\in \mathcal{C}^{\infty}(X\times \partial\mathbb{D})$ and assume that $\phi_{\tau}\in \mathcal{K}(X,\omega)$ for each $\tau\in \partial\mathbb{D}$. By the above, this defines a family $\Lambda_{\tau}$ of associated LS-submanifolds in $W_X$. Donaldson proves the following:

\begin{proposition}
There is a regular solution $\Phi$ to the HMAE with boundary data $\phi$ if and only if there is a smooth family of holomorphic discs $g_x:\mathbb{D}\to W_X$ parametrized by $x\in X$ such that
\begin{itemize}
    \item $\pi(g_x(0))=x$;
    \item for each $\tau\in \partial \mathbb{D}$ and each $x\in X$, $$g_x(\tau)\in \Lambda_{\tau};$$
    \item for each $\tau \in \mathbb{D}$, the map $x\mapsto g_x(\tau)$ is a diffeomorphism of $X$.
\end{itemize}
For a fixed $\tau\in \mathbb{D}$ the image of the map $x\mapsto g_x(\tau)$ is the $LS$-submanifold associated to the K\"ahler form $\omega+dd^c\Phi(\cdot,\tau)$.
\end{proposition}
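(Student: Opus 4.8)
The plan is to route everything through the Monge-Amp\`ere foliation and to lift its leaves into $W_X$ using the tautological section. For the forward direction, suppose $\Phi$ is regular and set $\Omega=\pi_X^*\omega+dd^c\Phi$. As explained above, $\Omega$ has a one-dimensional kernel distribution on $X\times\mathbb D$ which integrates to a foliation by holomorphic discs transverse to the fibres of $X\times\mathbb D\to\mathbb D$. Each leaf projects biholomorphically onto $\mathbb D$ and, normalised so that it meets the slice $\tau=0$ at the prescribed point, is the graph $\tau\mapsto(f_x(\tau),\tau)$ of a holomorphic map $f_x:\mathbb D\to X$ with $f_x(0)=x$. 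On the other hand, for each $\tau$ the potential $\Phi(\cdot,\tau)$ determines an LS-submanifold $\Lambda_\tau^\Phi\subset W_X$, realised as the image of the section $\sigma_\tau:X\to W_X$ that in a local potential $u$ for $\omega$ is $z\mapsto(\partial_z(u+\Phi)(z,\tau),z)$; on $\partial\mathbb D$ we have $\Phi(\cdot,\tau)=\phi_\tau$, so $\Lambda_\tau^\Phi=\Lambda_\tau$. I then define the candidate discs by lifting each leaf through the time-dependent section,
$$g_x(\tau):=\sigma_\tau(f_x(\tau))=\big(\partial_z(u+\Phi)(f_x(\tau),\tau),\,f_x(\tau)\big).$$

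The heart of the matter is that $g_x$ is holomorphic. Writing $P=u+\Phi$ and $V=\sum_j(\partial_\tau f_x^j)\,\partial_{z_j}+\partial_\tau$ for the tangent to the leaf, the condition $V\in\ker\Omega$ reads $\sum_j P_{j\bar i}\,\partial_\tau f_x^j+P_{\tau\bar i}=0$ for every index $i$. Conjugating this identity and using that $P$ is real, so $\overline{P_{j\bar i}}=P_{i\bar j}$, gives exactly $\sum_j P_{i\bar j}\,\overline{\partial_\tau f_x^j}+P_{i\bar\tau}=0$, which is precisely the vanishing of $\partial_{\bar\tau}\big(\partial_{z_i}P(f_x(\tau),\tau)\big)$. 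Since the base component $f_x(\tau)$ is holomorphic by construction, this shows $g_x$ is holomorphic. The local sections $\partial_z(u+\Phi)$ differ on overlaps by $\partial(v-u)$, which is the transition function of $W_X$, so the $g_x$ are globally well defined maps into $W_X$. The three asserted properties are now immediate: $\pi(g_x(0))=f_x(0)=x$; for $\tau\in\partial\mathbb D$ we have $g_x(\tau)\in\Lambda_\tau^\Phi=\Lambda_\tau$; and for fixed $\tau$ the map $x\mapsto g_x(\tau)=\sigma_\tau(f_x(\tau))$ is the composite of the foliation holonomy $x\mapsto f_x(\tau)$ (a diffeomorphism of $X$, since the leaves form a smooth transverse foliation) with the embedding $\sigma_\tau$, hence a diffeomorphism onto $\Lambda_\tau^\Phi$, which also yields the final sentence of the statement.

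For the converse, given such a family set $L_\tau:=\{g_x(\tau):x\in X\}$. Because $x\mapsto g_x(\tau)$ is a diffeomorphism onto its image, the family depends smoothly on $\tau$, and it agrees with $\Lambda_\tau$ on $\partial\mathbb D$, each $L_\tau$ is a closed LS-submanifold lying in the isotopy class of $\partial u$. By the Semmes--Donaldson description recalled above, $L_\tau=\Lambda^{\psi_\tau}$ for a potential $\psi_\tau\in\mathcal K(X,\omega)$, determined up to an additive constant; choosing the constants so that $\tau\mapsto\psi_\tau$ is smooth and setting $\Phi(z,\tau):=\psi_\tau(z)$ produces a smooth function with $\Phi(\cdot,\tau)\in\mathcal K(X,\omega)$ for all $\tau$ and $\Phi(\cdot,\tau)=\phi_\tau$ on $\partial\mathbb D$. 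Running the computation above in reverse, the holomorphicity of each $g_x$ forces the tangent to the graph $\tau\mapsto(\pi(g_x(\tau)),\tau)$ to lie in $\ker(\pi_X^*\omega+dd^c\Phi)$, so this kernel is everywhere at least one-dimensional and $\Omega^{n+1}=0$; since $\Phi(\cdot,\tau)\in\mathcal K(X,\omega)$ keeps $\Omega|_{X\times\{\tau\}}$ strictly positive, the kernel is exactly one-dimensional and $\Phi$ is the (necessarily regular) solution guaranteed by Proposition \ref{prop:env3}.

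I expect the genuine difficulty to be concentrated in the converse: verifying that the $L_\tau$ are honest LS-submanifolds in the correct isotopy class, and above all pinning down the additive constants so that the resulting $\Phi$ is not merely fibrewise correct but globally smooth and actually $\pi_X^*\omega$-harmonic along the discs. This amounts to checking that the tautological holomorphic $1$-form of $W_X$ pulls back compatibly under the family $g_x$, and it is here that the smooth dependence on $\tau$ and the diffeomorphism hypothesis are used in an essential way. By contrast, the forward direction reduces, once the reversible local computation is in hand, to the elementary geometry of the Monge-Amp\`ere foliation.
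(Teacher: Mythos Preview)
The paper does not actually prove this proposition: it is stated with the preamble ``Donaldson proves the following'' and used as a black box, with the proof residing in \cite{Donaldson}. So there is no argument in the paper to compare against.

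On its own merits, your forward direction is essentially correct and cleanly written. The local computation that the kernel condition $\sum_j P_{j\bar i}\,\partial_\tau f_x^j+P_{\tau\bar i}=0$ conjugates (using reality of $P$) to $\partial_{\bar\tau}\big(\partial_{z_i}P(f_x(\tau),\tau)\big)=0$ is exactly the mechanism linking the Monge--Amp\`ere foliation to holomorphic discs in $W_X$, and your check that the transition functions match is the right way to globalise. This is in the spirit of Donaldson's argument.

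Your converse, as you yourself flag, is where the real content lies and is presently only a sketch. The step ``each $L_\tau$ is a closed LS-submanifold in the isotopy class of $\partial u$'' is not automatic from the hypotheses: you know $L_\tau=\Lambda_\tau$ is LS on $\partial\mathbb D$, and you know each $g_x$ is holomorphic in $\tau$, but passing from these to $\Theta_1|_{L_\tau}=0$ for interior $\tau$ requires an argument (in Donaldson's treatment this comes from analysing the pullback of $\Theta$ under the full map $(x,\tau)\mapsto g_x(\tau)$ and using holomorphicity in $\tau$ together with the boundary condition). Likewise, fixing the additive constants so that $\Phi$ is genuinely smooth on $X\times\overline{\mathbb D}$ and $\pi_X^*\omega$-psh (not just fibrewise a K\"ahler potential) needs to be done carefully; this is where the tautological $1$-form on $W_X$ enters, as you anticipate. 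These are not fatal gaps, but they are precisely the substantive parts of Donaldson's proof, so to turn your outline into a proof you would need to fill them in rather than defer to the cited correspondence.
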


Thus regular solutions to the HMAE come from these particular families of holomorphic discs.  Then one can apply the deformation theory of holomorphic discs with boundary in a totally real submanifold (which is essentially an elliptic problem) to see that the existence of such a family is open as the boundary data varies, thus proving Theorem \ref{thm:donaldsonopenness}.
 
It is interesting to note that the regularity result of Lempert for the pluricomplex Green function discussed in \S \ref{sect:pluricomplex} is proved in a somewhat analogous manner. Recall that a holomorphic disc $f:\mathbb{D}\to U$ with $f(x)=z$ and $f(y)=w$ is said to be extremal if it realizes the Kobayashi distance between $z$ and $w$. Let $v$ denote the normal vector field of $\partial U$ pointing outward. Lempert calls a disc $f$ stationary if it extends continuously to a map $f:\overline{\mathbb{D}}\to \overline{U}$ with $f(\partial \mathbb{D})\subseteq \partial U$, and if the map $\partial \mathbb{D}\ni \zeta \mapsto [\overline{v_1(f(\zeta))}:...:\overline{v_n(f(\zeta))}]\in \mathbb{P}^{n-1}$ extends to a holomorphic function $\hat{f}:\mathbb{D}\to \mathbb{P}^{n-1}$. Lempert proves that a stationary disc is extremal and conjectures that the converse also holds. One can interpret $f$ being stationary as saying that the combined disc $(f,\hat{f})$ is attached to a certain totally real submanifold, and hence stationary discs persist given small perturbations of   $U$.  In particular this proves regularity for the pluricomplex Green function for domains that are small perturbations of the unit ball (thus the analogy with Donaldson's Openness proof).  To prove the result for all strictly convex domains Lempert uses a continuity argument, by establishing the required a priori estimates.

\subsection{Bibliographical Remarks}

 In work of Mabuchi \cite{Mabuchi}, Semmes \cite{Semmes} and Donaldson \cite{Donaldson},  the space  $\mathcal K(X,\omega)$ of K\"ahler metrics cohomologous to $\omega$ is given the structure of an infinite dimensional Riemannian manifold and, somewhat amazingly, the HMAE turns out to be the geodesic equation in this space.   More specifically, to find a geodesic segment joining two points $\phi_0,\phi_1\in \mathcal K(X,\omega)$ requires solving the Dirichlet problem for the HMAE over $X\times A$ where $A$ is an annulus, say $A = \{ c_0<|\tau|<c_1\}$, and the boundary data is taken to be $\phi(z,\tau) := \phi_i(z)$ for $|\tau|=c_i$ with $i=0,1$.    Thus any smoothness properties of the weak solution to the HMAE becomes a statement about smoothness of this (weak) geodesic segment, and having a regular solution says precisely that there is a genuine (i.e.\ smooth) geodesic segment joining $\phi_0$ and $\phi_1$ in $\mathcal K(X,\omega)$.      This manifestation of the HMAE generated much interest, not least since it was observed by Donaldson \cite{Donaldson} that the existence of a (sufficiently nice) geodesic segment joining any two points in $\mathcal K(X,\omega)$ would imply uniqueness of constant scalar curvature K\"ahler metrics.

We refer the reader again to \cite{Berndtsson2,Guansurvey,Guedjbook,Kol1, Kol2,PhongSongSturmSurvey,Zeriahisurvey} for other surveys on this topic.  The statement that the weak solution to the HMAE is $\mathcal{C}^{1,1}$, now proved in \cite{ChuTossatiWeinkove}, has a long history.    It was proved by Chen \cite{ChenI} (with complements by B\l ocki \cite{Blocki2}) that the weak solution has bounded Laplacian on $X\times \Sigma$ and so in particular is $\mathcal{C}^{1,\alpha}$ for any $\alpha<1$ in the interior of $X\times \Sigma$.  Moreover B\l ocki proves that if $(X,\omega)$ is assumed to have non-negative bisectional curvature then the weak solution is $\mathcal{C}^{1,1}$.   Other works on this topic include those of Phong-Sturm \cite{PhongSturmMAgeodesic,PhongSturmDirichlet,PhongSturmRegularity}, Eyssidieux--Guedj--Zeriahi \cite{Eyssidieux}, Demailly et al \cite{Demailyetal}.  When $\Sigma$ is the unit disc, $\mathcal{C}^{1,1}$ on the interior of $X\times \Sigma$ has been proved by Berman \cite{Berman3} using a technique based on the original approach of Bedford-Taylor.   

One can more generally consider the Dirichlet problem for the HMAE on a complex manifold-with-boundary, and several of the above cited references, including \cite{ChuTossatiWeinkove},  hold in this case as well (usually under an assumption of being weakly-pseudoconcave or having Levi-flat boundary).  For example, one can consider the HMAE on the total space of a (sufficiently nice) test-configuration, thus connecting K-stability with weak-geodesics (see, for instance \cite{BermanKpoly,ChenTang,PhongSturmTestConfigKstab,RossNystromATC, SongZelditch} as well as the contribution by Sz\'ekelyhidi in this volume).   

Works on the related question of the implications of the HMAE to the geometry of the space of K\"ahler metrics include those of   Arezzo-Tian \cite{ArezzoTian}, Berman-Boucksom-Guedj-Zeriahi \cite{Bermanetal}, Berndtsson-Cordero-Erausquin-Klartag-Rubinstein \cite{BerndtssonYanir},  Chen-Sun \cite{ChenSun}, Chen-Tian \cite{ChenTian} and Darvas \cite{DarvasI, DarvasII}.    Ultimately it turned out that the particular application concerning uniqueness of constant scalar curvature K\"ahler metrics cannot easily be addressed through regularity, but can resolved with just the weak solution as achieved by Berman-Berndtsson \cite{BerndtssonBerman} (see also Chen-Li-P\u aun \cite{ChenPaun}).

  Donaldson \cite{Donaldson} gives examples of boundary data over the disc for which the weak solution is not regular, but we observe that the argument uses contradiction, and thus is non-explicit.  Nevertheless, it was initially hoped that this phenomena would not hold over the annulus, and so  any weak geodesic connecting two K\"ahler potentials would be regular (and thus a geodesic in the strongest possible sense).   It was not until the work of Lempert-Vivas that this was proven not to be the case.  In \cite{LempertVivas} they find geodesic segments that are not $\mathcal{C}^3$ up to the boundary, and later Darvas-Lempert \cite{LempertDarvas} found geodesic segments that fail to be $\mathcal{C}^2$ up to the boundary.  In subsequent sections we will see how regularity can fail both for the HMAE over the disc and over the punctured disc.  As the case for the pluricomplex Green's function, it is currently unknown whether or not singularities can occur in the interior.





\section{The Hele-Shaw Flow}\label{sec:HS}

The rest of this paper is devoted to surveying previous work of the authors which connects the HMAE with the Hele-Shaw flow.  We shall discuss two approaches to this flow, and both are useful in understanding its relation with the HMAE.  First is the so-called \emph{weak Hele-Shaw flow} that can be described using basic potential-theoretic constructions.  The advantage of this approach is that it does not require any a priori smoothness, making it both elementary and very flexible.  Second is the \emph{strong Hele-Shaw flow} that is defined dynamically by describing the motion of the boundary of the flow.  This necessarily requires assuming more smoothness, but has the advantage of having a physical interpretation thus making it more intuitive.

Of course, the strong Hele-Shaw flow is also a weak one, and a weak Hele-Shaw flow that is also smooth will be a strong one.  In this section we shall only consider the weak flow, allowing us to quickly move to the connection with the HMAE.    Consideration of the strong flow will be postponed until \S\ref{sec:strong}.

Our account is broadly self-contained, in that we include all the main features of the flow we need.  That said, this represents only a tiny part of the Hele-Shaw flow theory, and the reader will find in the bibliographical remarks many references that go far beyond what is included here.

\subsection{Lelong Numbers}\label{sec:lelong}

From now on, $X$ will be a connected Riemann surface along with a distinguished point $z_0\in X$ and $\omega$ will be a K\"ahler form on $X$.   As $X$ has complex dimension $1$ being plurisubharmonic is the same as being subharmonic, and we let $\Sh(X,\omega)$ denote the space of functions that are $\omega$-subharmonic.  Let $z$ be a holomorphic coordinate defined near $z_0$.  Then for $\psi\in \Sh(X,\omega)$ the \emph{Lelong number} of $\psi$ of $z_0$ is defined to be
$$\nu_{z_0}(\psi) := \sup \{c\ge 0: \psi \le c \ln |z-z_0|^2 + O(1)\}$$
where the inequality is to be understood as meaning there is a constant $C$ such that $\psi\le c\ln |z-z_0|^2 +C$ near $z_0$.  We observe the supremum is actually attained, so if $\nu_{z_0}(\psi)=t$ then $\psi\le t \ln |z-z_0|^2 + O(1)$.  To see this,  let $B$ be a small ball centered around $z_0$.  For any $c<t$ the function $\psi(z) - c\ln |z-z_0|^2$ is bounded above as $z$ tends to $z_0$, and lies in $\Sh(B\setminus \{z_0\},\omega)$ and thus extends to a function in $\Sh(B,\omega)$ \cite[Theorem 2.7.1]{Klimek}.    On the other hand, on the boundary of the ball,  $\psi(z) - c\ln |z-z_0|^2|_{\partial B}$ is bounded from above uniformly over all $c<t$.  Thus by the maximum principle $\psi(z) - c\ln |z-z_0|^2$ is bounded above uniformly over $z\in B$ and $c<t$.  Then letting $c$ tend to $t$ gives $\psi(z) \le t\ln |z-z_0|^2 + O(1)$ as claimed.

The Lelong number measures the mass of the current $dd^c\psi$ at the point $z_0$, in that
\begin{equation}\label{eq:lelongnumbermass}
\nu_{z_0}(\psi) = \lim_{r\to 0^+} \int_{B_r} dd^c\psi = \lim_{r\to 0^+} \int_{B_r} \omega_\psi
\end{equation}
where $B_r$ is the ball of radius $r$ centered at $z_0$ \cite[Theorem 2.8]{Demaillytrento}.

\subsection{Definitions}\label{sec:hsdefinitions}
 
The basic definition on which everything else is based is the following:
\begin{definition}(Hele-Shaw Envelope)
For $t\in \mathbb R$ let 
$$\psi_{t} := \sup\{\psi\in \Sh(X,\omega)  : \psi\le 0 \text{ and } \nu_{z_0}(\psi) \ge t\}.$$
We shall refer to $\psi_t$ as the \emph{Hele-Shaw envelope} at time $t$.
\end{definition}

Of course the envelope $\psi_t$ depends on the background K\"ahler form $\omega$, but this will always be clear from context.  Clearly $\psi_t\le 0$ everywhere.

\begin{definition}(Weak Hele-Shaw Flow)
For $t\in \mathbb R$ set
$$\Omega_t :  = \{z\in X : \psi_t(z)< 0\}.$$
We refer to $\Omega_t$ as the \emph{weak Hele-Shaw domain} at time $t$, and the collection of all such domains as the \emph{weak Hele-Shaw flow}.
\end{definition}

The weak Hele-Shaw domains are generally hard to compute, unless one imposes some additional symmetry as in the following example.

\begin{example}(Radially Symmetric Case)\label{ex:radiallysymmetric}
Suppose $X=\mathbb C$, let $z_0$ be the origin and assume the K\"ahler form $\omega$ is radially symmetric.  Then we can write
$$ \omega = dd^c \phi$$
for some smooth radially symmetric function $\phi$ on $\mathbb C$, so
$$ \phi( e^{i\theta} z ) = \phi(z) \text{ for all }  \theta\in \mathbb R.$$
It is not hard to see that the Hele-Shaw envelopes and Hele-Shaw domains are also be radially symmetric, and we now calculate what these actually are.  We assume for all $t>0$ that $\phi$ satisfies the growth condition
$$\phi(z) \ge t \ln |z|^2 + O(1) \text{ for } |z|\gg 0.$$
It is convenient to use the variable
$$ s= -\log |z|^2$$
so our distinguished point $z=0$ corresponds to $s=\infty$.   Then we can write
$$ \phi(z) = u(s)$$
for some smooth $u:\mathbb R\to \mathbb R$.  By differentiating twice, one can check the condition that $\omega$ is strictly positive implies $u$ is strictly convex,  and
$$ \lim_{s\to \infty} \frac{du}{ds} =0 \text{ and }  \lim_{s\to -\infty} \frac{du}{ds} =\infty$$
(the first coming from $\phi(z)$ extending smoothly over $z=0$,  and the second coming from the assumed growth condition).     So for $t\in \mathbb R_+$  there is a unique $s_0\in \mathbb R$ such that
$$\frac{du}{ds}|_{s_0}=-t.$$
We let
$$ v_t(s) := \left\{ \begin{array}{ll} u(t) & \text{ for } s<s_0 \\ u(s_0) -t(s-s_0) &\text{ for }s\ge s_0\end{array}\right.$$
Then $v_t$ is  the largest convex function bounded above by $u$ with the property that $v_t(s) \le -ts + O(1)$ as $s\to \infty$.     We claim the Hele-Shaw envelope is given by
\begin{equation}
\psi_t(z) = v_t(s)-u(s)\label{eq:radialclaim}
\end{equation}
and the weak Hele-Shaw domain is
$$ \Omega_t = \{ s>s_0 \} = \{ z : |z|^2<e^{-s_0}\}.$$

To prove this,  set $\tilde{\psi}(z) = v_t(s)-u(s)$ so the goal is to show $\tilde{\psi} = \psi_t$.    Observe $v_t$ being convex implies $\tilde{\psi}\in \Sh(\mathbb C,\omega)$ and its behaviour as $s$ tends to infinity gives $\nu_{z=0}({\tilde{\psi}})=t$.  Clearly $\tilde{\psi}\le 0$, so $\tilde{\psi}\le \psi_t$.    For the other inequality, let $\psi\in \Sh(X,\omega)$ satisfy $\psi\le 0$ and $\nu_{z=0}(\psi)\ge t$.   As $v_t$ is linear on $\{s>s_0\}$ we have $\omega_{\tilde{\psi}}=0$ on $D^\times: = \{s>s_0\} = \{ 0<|z|^2<e^{-s_0}\}$.    Then the difference $\psi-\tilde{\psi}$ is bounded as $z\to 0$ and subharmonic on $D^\times$ and thus extends to a subharmonic function on all of $D$ \cite[Theorem 2.7.1]{Klimek}.  On the other hand $\tilde{\psi} = 0$ on $\partial D$, and so $\psi-\tilde{\psi}\le 0$ on $\partial D$.  Thus by the maximum principle, $\psi\le \tilde{\psi}$ on all of $D$.  But $v_t=u$ on the set $\{s\ge s_0\}$ so on the complement of $D$ clearly $\tilde{\psi} = 0\ge \psi$, and hence $\psi\le \tilde{\psi}$ everywhere.   Taking the supremum over all such $\psi$ gives $\psi_t\le \tilde{\psi}$, and thus $\psi_t=\tilde{\psi}$ as claimed.  The conclusion then about the weak Hele-Shaw domain follows as this is the set on which $v_t$ is equal to $u$.\end{example}

\subsection{Basic Properties of the Hele-Shaw Flow on compact Riemann surfaces}

Assume now $X$ is compact, which in particular implies $\int_X \omega$ is finite.  It is not hard to see if $\omega$ is replaced with $\lambda\omega$ for some $\lambda>0$ then $\psi_t$ is replaced with $\lambda \psi_{\lambda^{-1} t}$ and $\Omega_t$ replaced by $\Omega_{\lambda^{-1} t}$.  Thus without loss of generality we assume that
$$ \int_X \omega =1.$$
With this in mind we turn to some of the basic properties of the weak Hele-Shaw flow.

\begin{proposition}[Basic Properties of the weak Hele-Shaw flow in the compact case]\label{prop:HSlelong}\ 
\begin{enumerate}
\item For $t\le 0$ we have $\psi_t\equiv 0$ and $\Omega_t=\emptyset$.  
\item For $t>1$ we have $\psi_t\equiv -\infty$ and $\Omega_t =X$. 
\item For $t\in [0,1]$ we have
\begin{enumerate}
\item $\psi_t$ is locally bounded away from $z_0$.
\item $\psi_t\in \Sh(X,\omega)$.
\item $\nu_{z_0}(\psi_t)= t$.
\item $\omega_{\psi_t}|_{\Omega_t} =t\delta_{z_0}$.
\end{enumerate}
\end{enumerate}
\end{proposition}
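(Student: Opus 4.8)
The plan is to treat the three regimes of $t$ separately, extracting everything from two facts recalled above: that the Lelong number equals the point mass of $\omega_\psi$ at $z_0$, as in \eqref{eq:lelongnumbermass}, and that on the compact surface $X$ every $\psi\in\Sh(X,\omega)$ satisfies $\int_X\omega_\psi=\int_X\omega=1$ (by Stokes, since $\int_X dd^c\psi=0$). Parts (1) and (2) are then immediate. For $t\le 0$ the constraint $\nu_{z_0}(\psi)\ge t$ is vacuous, as Lelong numbers are non-negative, so $\psi_t=\sup\{\psi\in\Sh(X,\omega):\psi\le 0\}$; the constant $0$ is admissible and every competitor is $\le 0$, giving $\psi_t\equiv 0$ and $\Omega_t=\emptyset$. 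For $t>1$, formula \eqref{eq:lelongnumbermass} gives $\nu_{z_0}(\psi)=\omega_\psi(\{z_0\})\le\int_X\omega_\psi=1$ for every competitor, so none satisfies $\nu_{z_0}(\psi)\ge t$; the supremum is over the empty set, whence $\psi_t\equiv-\infty$ and $\Omega_t=X$.

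For $t\in[0,1]$ I would first produce one good competitor. Let $G_{z_0}\in\Sh(X,\omega)$ be the Green-type potential with $\omega_{G_{z_0}}=\delta_{z_0}$, normalised so that $\sup_X G_{z_0}=0$; this exists because $\delta_{z_0}$ and $\omega$ are cohomologous on the Riemann surface $X$ (both have mass $1$). Then $\psi^{\mathrm{mod}}:=tG_{z_0}$ satisfies $\omega_{\psi^{\mathrm{mod}}}=(1-t)\omega+t\delta_{z_0}\ge 0$, is $\le 0$, and has $\nu_{z_0}(\psi^{\mathrm{mod}})=t$, so it is admissible and $\psi_t\ge tG_{z_0}$. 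As $G_{z_0}$ is finite and locally bounded away from $z_0$, together with $\psi_t\le 0$ this gives (3a). Near $z_0$ one has $G_{z_0}=\ln|z-z_0|^2+O(1)$, so $\psi_t\ge t\ln|z-z_0|^2-C$, forcing $\nu_{z_0}(\psi_t)\le t$. For the opposite inequality I would use a uniform upper barrier: on a coordinate ball $B$ around $z_0$ with $\omega=dd^c\phi$, for any admissible $\psi$ the function $\psi+\phi-t\ln|z-z_0|^2$ is subharmonic, since its $dd^c$ equals $\omega_\psi-t\delta_{z_0}\ge 0$ (the point mass of $\omega_\psi$ at $z_0$ being $\ge t$); the maximum principle on $B$ together with $\psi\le 0$ then yields $\psi\le t\ln|z-z_0|^2+C'$ with $C'$ independent of $\psi$. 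Taking the supremum gives $\psi_t\le t\ln|z-z_0|^2+C'$ and hence $\nu_{z_0}(\psi_t)\ge t$, completing (3c).

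Part (3b) follows from the same barrier. Writing $\psi_t^{*}$ for the upper-semicontinuous regularisation, $\psi_t^{*}$ is $\omega$-subharmonic as the regularised supremum of a non-empty family bounded above by $0$. Since $\psi_t\le 0$ and $\psi_t\le t\ln|z-z_0|^2+C'$, and both majorants are upper-semicontinuous, these bounds pass to $\psi_t^{*}$; thus $\psi_t^{*}$ is itself admissible, so $\psi_t^{*}\le\psi_t\le\psi_t^{*}$ and $\psi_t=\psi_t^{*}\in\Sh(X,\omega)$.

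Finally, for (3d) I would show $\omega_{\psi_t}=0$ on $\Omega_t\setminus\{z_0\}$ by balayage. Fix $z_1\in\Omega_t\setminus\{z_0\}$ and choose a small ball $B_1\Subset\Omega_t\setminus\{z_0\}$ on which $\psi_t\le-\epsilon$ and the oscillation of a local potential $\phi$ is less than $\epsilon/2$. Replacing $\psi_t$ on $B_1$ by its $\omega$-harmonic modification produces a function that is $\omega$-subharmonic, $\ge\psi_t$, unchanged near $z_0$ (hence with the same Lelong number $t$), and $\le-\epsilon/2<0$ by the oscillation bound; therefore it is admissible, so it is $\le\psi_t$, forcing equality and $\omega_{\psi_t}=0$ on $B_1$. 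Since $t>0$ gives $\psi_t(z_0)=-\infty$, so $z_0\in\Omega_t$, and since all the mass of $\omega_{\psi_t}$ near $z_0$ is concentrated at $z_0$ with total $\nu_{z_0}(\psi_t)=t$ by \eqref{eq:lelongnumbermass}, we conclude $\omega_{\psi_t}|_{\Omega_t}=t\delta_{z_0}$. The delicate point throughout is this last balayage step: the modification must be kept $\le 0$ to remain a competitor, which is exactly what the choice of a sufficiently small ball (with oscillation of $\phi$ small relative to the gap $\epsilon$) secures, and this is the step I expect to require the most care.
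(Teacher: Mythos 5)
Your proposal is correct and follows essentially the same route as the paper: the same mass/Lelong-number argument via \eqref{eq:lelongnumbermass} for parts (1) and (2), the same Green-type competitor (the paper's Lemma \ref{lem:existenceofalphat}) giving (3a) and the upper bound on the Lelong number, the same uniform barrier near $z_0$ combined with upper-semicontinuous regularisation to get (3b) and (3c), and the same Perron-style harmonic modification for (3d). The only differences are cosmetic: you use a constant boundary bound where the paper uses the harmonic extension $\gamma$ from \eqref{eq:dirichletlaplacian}, and you write out in detail the balayage step that the paper merely cites as identical to the Perron--Bremermann argument.
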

Our proof will use the following preliminary result.

\begin{lemma}\label{lem:existenceofalphat}
There exists an $\alpha\in \Sh(X,\omega)\cap \mathcal{C}^{\infty}(X\setminus\{z_0\})$ such (1) $\sup_X \alpha= 0$ (2) $\alpha = \ln |z-z_0|^2 + O(1)$ near $z_0$, so in particular $\nu_{z_0}(\alpha) = 1$ and (3) $\omega + dd^c\alpha = \delta_{z_0}$.
\end{lemma}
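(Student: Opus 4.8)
The plan is to reduce the statement to solving the single current equation
$$\omega + dd^c\alpha = \delta_{z_0},$$
and then to read off all three properties from the solution. Indeed, suppose we can produce $\alpha$ that is smooth on $X\setminus\{z_0\}$, satisfies $\alpha = \ln|z-z_0|^2 + O(1)$ near $z_0$, and solves this equation. Then property (3) is immediate, and since $\omega_\alpha = \delta_{z_0}\ge 0$ in the sense of currents while $\alpha$ is upper-semicontinuous and not identically $-\infty$, we get $\alpha\in \Sh(X,\omega)$. The local expansion gives property (2), and $\nu_{z_0}(\alpha)=1$ then follows from the normalisation $dd^c\ln|z-z_0|^2 = \delta_{z_0}$, compatibly with \eqref{eq:lelongnumbermass}. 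Finally, $\alpha$ is continuous on the compact set $X\setminus\{z_0\}$ (hence bounded above) and tends to $-\infty$ at $z_0$, so $\sup_X\alpha$ is finite and attained away from $z_0$; replacing $\alpha$ by $\alpha - \sup_X\alpha$ leaves $dd^c\alpha$ unchanged and arranges (1). Thus everything reduces to producing one solution with the stated singularity.

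To construct such a solution I would first fix a holomorphic coordinate $z$ centred at $z_0$, choose a cutoff function $\rho$ supported in this chart with $\rho\equiv 1$ near $z_0$, and set $\beta := \rho\,\ln|z-z_0|^2$. Away from $z_0$ the function $\beta$ is smooth, while near $z_0$ it agrees with $\ln|z-z_0|^2$, so $dd^c\beta = \delta_{z_0} + \gamma_0$ for a smooth $2$-form $\gamma_0$ supported in the region where $\rho$ varies. Writing $\gamma := \omega + \gamma_0$, which is a smooth $2$-form on all of $X$, we have $\omega + dd^c\beta = \delta_{z_0} + \gamma$. Integrating, Stokes' theorem kills $\int_X dd^c\beta$, so $\int_X\gamma = \int_X\omega = 1 - 1 = 0$ after accounting for the unit mass of $\delta_{z_0}$; that is, $\gamma$ is a smooth real $2$-form of total mass zero. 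By Hodge theory on the compact Kähler (here Riemann) surface $(X,\omega)$ — equivalently, solvability of the Poisson equation for a mean-zero right-hand side — there is a smooth function $g$ with $dd^c g = -\gamma$. Setting $\alpha := \beta + g$ then yields $\omega + dd^c\alpha = \delta_{z_0} + \gamma + dd^c g = \delta_{z_0}$, and $\alpha$ is smooth away from $z_0$ with $\alpha = \ln|z-z_0|^2 + O(1)$ near $z_0$ since $g$ is bounded there, as required.

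The main obstacle is precisely the solvability of $dd^c g = -\gamma$ for a smooth $2$-form $\gamma$ of integral zero, together with the smoothness of $g$. This is the classical statement that on a compact Kähler manifold every exact-cohomology-class smooth form admits a smooth $dd^c$-potential (the $\partial\overline\partial$-lemma), which on a compact Riemann surface is just the existence and elliptic regularity of solutions to the Laplace equation $\Delta_\omega g = f$ with $\int_X f\,\omega = 0$; the solution is unique up to an additive constant, consistent with the freedom we exploit in the normalisation $\sup_X\alpha=0$. Everything else in the argument is elementary potential theory, so this linear solvability step is the only genuine input.
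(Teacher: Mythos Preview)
Your argument is correct and essentially identical to the paper's: both construct $\beta=\rho\ln|z-z_0|^2$ with a cutoff, observe $dd^c\beta=\delta_{z_0}+\text{(smooth)}$, use $\int_X\omega=1$ to see the residual smooth form is $dd^c$-exact, and then correct by a smooth potential and an additive constant. The only cosmetic difference is that the paper phrases the solvability step via Dolbeault cohomology of currents while you invoke the $\partial\overline\partial$-lemma/Poisson equation directly.
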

\begin{proof}
Suppose  $z$ is a holomorphic coordinate on a ball $B$ around $z_0$.  Let $\rho$ be a bump function identically 1 near $z_0$ and supported in $B$ and consider
$$ \beta(z):= \rho(z) \log |z-z_0|^2.$$
Then $dd^c\beta = \delta_{z_0} + \tau$ for some smooth form $\tau$.  But in Dolbeault cohomology $0=[dd^c\beta] = [\delta_{z_0}] + [\tau] = [\omega] + [\tau]$ where the last equality uses $\int_X \omega=1$ (and we are using Dolbeault cohomology of currents, which agrees with Dolbeault cohomology of smooth forms \cite[IV, 6.13]{Demailly2}).   Thus $\tau = - \omega + dd^c f$ for some smooth function $f$ on $X$, and $\alpha := \beta -f-C$ for a suitable constant $C$ is in $\Sh(X,\omega)\cap \mathcal{C}^{\infty}(X\setminus\{z_0\})$ and satisfies conditions (1) through (3).
\end{proof}

\begin{remark}
On $\mathbb P^1$, with its Fubini-Study form, and coordinate $z$ on $\mathbb C\subset \mathbb P^1$ so $z_0$ is the origin,  we can explicitly write $\alpha =\ln |z|^2 - \ln (1+|z|^2)$.   
\end{remark}

\begin{proof}[Proof of Proposition \ref{prop:HSlelong}]
All of this is rather standard, and for convenience we give details.  If $t\le 0$ then the constant function $0$ is a candidate for the envelope defining $\psi_t$, giving (1).  On the other hand if $\psi\in \Sh(X,\omega)$ is not identically $-\infty$ and $\nu_{z_0}(\psi)\ge t$ then $t \le \int_X \omega_{\psi} = \int_X \omega=1$ by \eqref{eq:lelongnumbermass} which proves (2).

So assume now $t\in [0,1]$.  Then (3a) follows as $\psi_t$ is bounded from below by the function $t\alpha$ where $\alpha$ is provided by Lemma \ref{lem:existenceofalphat}.  Moreover this implies $\nu_{z_0}(\psi_t)\le \nu_{z_0}(t\alpha) = t$. Now let $z$ be a holomorphic coordinate defined near $z_0$ and consider
$$\beta := \operatorname{sup}^* \{ \psi\in \Sh(X,\omega): \psi\le 0 \text{ and } \psi \le t \ln |z-z_0|^2 + O(1)\}.$$
Clearly $\beta\ge \psi_t$ and we shall show that in fact equality holds.

First observe being the upper-semicontinuous regularisation of a supremum of $\omega$-sub\-harmonic functions,  $\beta$ is itself $\omega$-subharmonic \cite[Thm 2.6.1(iv)]{Klimek} and clearly $\beta\le 0$.     We claim $\nu_{z_0}(\beta)\ge t$.   The issue here is that the $O(1)$ term in the definition of $\beta$ can depend on $\psi$.    To address this, let $B$ be  a small ball around $z_0$ on which we can write  $\omega = dd^c\zeta$ for some smooth function $\zeta$.  Let $\gamma$ be the solution to the classical Dirichlet problem for the Laplacian
\begin{equation}
 dd^c \gamma =0 \text{ on } B \text{ and } \gamma|_{\partial B} = (\zeta-t\ln |z-z_0|^2)|_{\partial B}.\label{eq:dirichletlaplacian}
\end{equation}
It is known \cite[Theorem 2.2.6]{Klimek} such a $\gamma$ exists, and is locally bounded on $B$.  Then set
$$ \epsilon := - \zeta + t \ln |z-z_0|^2$$
and we claim $\beta \le \epsilon$ near $z_0$.    To see this, suppose $\psi\in \Sh(X,\omega)$ is such that $\psi\le 0$ and $\nu_{z_0}(\psi)\ge t$.   Then
$$\psi-\epsilon = \psi + \zeta - t\ln |z-z_0|^2\in \Sh(B\setminus \{z_0\}).$$
 On the other hand by construction $(\psi-\epsilon)|_{\partial B} \le -\epsilon|_{\partial B} =\gamma|_{\partial B}$.  As $z$ approaches $0$ we have $\psi\le t \ln |z-z_0|^2 + O(1)$ and $\epsilon = t\ln |z-z_0|^2 + O(1)$ so $\psi - \epsilon$ is bounded near $z_0$, and thus extends to a subharmonic function over all of $B$  \cite[Theorem 2.7.1]{Klimek}.  Hence by the maximum principle $\psi\le \epsilon+\gamma $ over $B$.   Taking the supremum over all such $\psi$, and then the upper semicontinuous regularisation, we deduce $\beta\le \epsilon$ near $z_0$ as claimed.  In particular $\beta \le t \ln |z-z_0|^2 + O(1)$ giving $\nu_{z_0}(\beta)\ge t$.   Thus $\beta$ is a candidate for the envelope defining $\psi_t$, so in fact $\beta = \psi_t$ proving items (3b) and (3c).
 
That $\psi_t$ is $\omega$-harmonic away from $z_0$ is proved the same way that the Perron-Bremermann envelope is shown to solve the HMAE.  Then (3d) follows from (3c) and \eqref{eq:lelongnumbermass}.

\end{proof}


\begin{corollary}(Openness, Connectedness) \label{cor:open}\ 
The Hele-Shaw domain $\Omega_t$ is open, connected and $z_0\in \Omega_t$ for $t>0$. 
\end{corollary}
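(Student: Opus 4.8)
The plan is to obtain openness and the membership $z_0\in\Omega_t$ directly from Proposition~\ref{prop:HSlelong}, and to prove connectedness by exhibiting an admissible competitor in the envelope defining $\psi_t$ whenever a spurious component is present.

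For openness, the cases $t\le 0$ and $t>1$ are immediate from parts (1) and (2), where $\Omega_t$ is $\emptyset$ or $X$. For $t\in(0,1]$, part (3b) gives $\psi_t\in\Sh(X,\omega)$, hence $\psi_t$ is upper-semicontinuous and $\Omega_t=\{\psi_t<0\}$ is open, being the preimage of $(-\infty,0)$. For the membership $z_0\in\Omega_t$ with $t>0$: when $t>1$ this is clear since $\Omega_t=X$, and when $t\in(0,1]$ part (3c) gives $\nu_{z_0}(\psi_t)=t>0$. Since the supremum in the definition of the Lelong number is attained (as noted in \S\ref{sec:lelong}), we have $\psi_t\le t\ln|z-z_0|^2+O(1)$ near $z_0$, and as $t>0$ this forces $\psi_t(z_0)=-\infty<0$, i.e.\ $z_0\in\Omega_t$.

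For connectedness only $t\in(0,1]$ is nontrivial (otherwise $\Omega_t$ is $\emptyset$ or $X$, both connected). Suppose $\Omega_t$ has a connected component $U$ with $z_0\notin U$; the goal is to derive a contradiction. Since $X$ is a manifold, and hence locally connected, $U$ is open, and the standard fact that the boundary of a component of an open set lies in the complement of that set gives $\partial U\subseteq X\setminus\Omega_t$, so $\psi_t\ge 0$ on $\partial U$ (and thus $\psi_t\equiv 0$ there, as $\psi_t\le 0$ globally). I would then set
$$\tilde\psi:=\begin{cases} 0 & \text{on } U,\\ \psi_t & \text{on } X\setminus U,\end{cases}$$
the idea being that $\tilde\psi$ is an admissible competitor that strictly exceeds $\psi_t$ on the nonempty set $U$.

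To confirm admissibility: $\tilde\psi\le 0$ is clear; because $z_0\notin U$ and $U$ is open, $\tilde\psi=\psi_t$ on a neighbourhood of $z_0$, so $\nu_{z_0}(\tilde\psi)=\nu_{z_0}(\psi_t)=t$; and $\tilde\psi\in\Sh(X,\omega)$ follows from the gluing lemma for $\omega$-subharmonic functions applied to $\psi_t$ and the constant $0$ (which is $\omega$-subharmonic since $\omega\ge 0$), the boundary inequality $\psi_t\ge 0$ on $\partial U$ being exactly what lets the two pieces be glued. Hence $\tilde\psi\le\psi_t$ pointwise by maximality of $\psi_t$, yet $\tilde\psi=0>\psi_t$ on $U$, a contradiction; so $\Omega_t$ is connected. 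The step I expect to require the most care is the gluing, namely verifying upper-semicontinuity and the sub-mean-value inequality across $\partial U$, where the identity $\psi_t|_{\partial U}=0$ is essential; everything else is bookkeeping.
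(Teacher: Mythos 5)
Your proof is correct, and while it shares the paper's skeleton for openness, for the membership $z_0\in\Omega_t$, and for the contradiction set-up (a component $U$ of $\Omega_t$ with $z_0\notin U$, so that $\partial U\subset X\setminus\Omega_t$ and $\psi_t=0$ on $\partial U$), the mechanism you use to reach the contradiction is genuinely different. The paper invokes Proposition \ref{prop:HSlelong}(3d): since $\omega_{\psi_t}=t\delta_{z_0}$ on $\Omega_t$, the function $-\psi_t$ is subharmonic on the spurious component, and the maximum principle with boundary values $0$ forces $\psi_t\ge 0$ there, contradicting that the component lies in $\{\psi_t<0\}$. You instead glue the constant $0$ on $U$ with $\psi_t$ outside to manufacture a competitor $\tilde\psi$ for the envelope defining $\psi_t$, and contradict maximality. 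Your route buys two things: it uses only parts (3b) and (3c) of the proposition together with the elementary gluing lemma, so it is independent of (3d), which is the deepest part of the proposition (its proof rests on the Perron--Bremermann-type argument); and the boundary hypothesis of the gluing lemma, namely $\limsup_{z\to\zeta,\, z\in U}0\le\psi_t(\zeta)=0$ for $\zeta\in\partial U$, is trivially satisfied, whereas the maximum-principle route strictly speaking needs $\liminf_{z\to\zeta,\, z\in U}\psi_t(z)\ge 0$ on $\partial U$, which does not follow from upper semicontinuity alone and so requires an extra word at this stage of the paper, before the $\mathcal{C}^{1,1}$ regularity of $\psi_t$ is available. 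The paper's route is shorter once (3d) is granted. One small point to tighten in your write-up: ``$z_0\notin U$ and $U$ open'' only gives that $X\setminus U$ is a closed set containing $z_0$, not a neighbourhood of it; what you actually need is $z_0\notin\overline U$, and this does follow from facts you already established, since $z_0\in\Omega_t$ while $\partial U\subset X\setminus\Omega_t$, hence $z_0\notin U\cup\partial U$ (equivalently, $z_0$ lies in a different component of $\Omega_t$, an open set on which $\tilde\psi=\psi_t$).
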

\begin{proof}
Openness of $\Omega_t$ follows from semicontinuity of $\psi_t$, and if $t>0$ then $\nu_{z_0}(\psi)>0$ so  $\psi_t(z_0)=-\infty$ giving $z_0\in \Omega_t$.   If $\Omega_t$ were not connected then we could find a component $S$ that does not contain $z_0$.  Since $\Omega_t$ is open,  $\partial S\subset X\setminus \Omega_t$ and so $\psi_t=0$ on $\partial S$.    As $\omega_{\psi_t}=0$ on $\Omega_t\setminus \{z_0\}$ we see $-\psi_t$ is subharmonic on $S$, so the maximum principle implies $-\psi_t\le 0$ on $S$.  But this is absurd as $S\subset \Omega_t = \{ \psi_t<0\}$.
\end{proof}

The next two results show the Hele-Shaw domains only depends on the value of the K\"ahler metric in a region slightly larger than that domain.  To express this precisely, suppose $\tilde{\omega}$ is another K\"ahler form on $X$, with $\int_X \tilde{\omega}=1$, and denote by $\tilde{\psi}_t$ and $\tilde{\Omega}_t$ the Hele-Shaw envelopes and weak Hele-Shaw domains associated to $\tilde{\omega}$.

\begin{lemma}[Monotonicity]\label{lem:monotonicity}\ 
Suppose $S\subset X$ is open and 
$$\tilde{\omega} \ge \omega \text{ over S},$$
and assume $\Omega_t$ is relatively compact in $S$.  Then 
\begin{equation}\psi_t \le \tilde{\psi}_t \text{ and }
\tilde{\Omega}_t \subset \Omega_t.\label{eq:monosets}
\end{equation}
\end{lemma}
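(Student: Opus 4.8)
The plan is to prove the single inequality $\psi_t \le \tilde\psi_t$; the inclusion $\tilde\Omega_t \subset \Omega_t$ is then immediate, since any $z$ with $\tilde\psi_t(z) < 0$ satisfies $\psi_t(z) \le \tilde\psi_t(z) < 0$ and hence lies in $\Omega_t$. To get the inequality I would show that the $\omega$-envelope $\psi_t$ is \emph{itself} an admissible competitor in the envelope defining $\tilde\psi_t$; taking the supremum (and its upper-semicontinuous regularisation) over all such competitors then yields $\psi_t \le \tilde\psi_t$ at once.

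So the task reduces to verifying, for $t\in[0,1]$, the three defining conditions of the $\tilde\omega$-envelope for the function $\psi_t$: that $\psi_t \le 0$, that $\nu_{z_0}(\psi_t) \ge t$, and that $\psi_t \in \Sh(X,\tilde\omega)$. The first is immediate, and the second follows from Proposition \ref{prop:HSlelong}(3c), which gives $\nu_{z_0}(\psi_t) = t$. (The ranges $t\le 0$ and $t>1$ are trivial: in the former both envelopes vanish identically, and in the latter $\psi_t\equiv-\infty$ makes the inequality automatic.) The real content is the third condition.

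For this I would write
$$\tilde\omega + dd^c\psi_t = \omega_{\psi_t} + (\tilde\omega - \omega)$$
and check positivity locally on the two open sets $S$ and $X\setminus\overline{\Omega_t}$, which cover $X$ because relative compactness gives $\overline{\Omega_t}\subset S$. On $S$ both summands are nonnegative --- the first because $\psi_t\in\Sh(X,\omega)$ by Proposition \ref{prop:HSlelong}(3b), the second by the hypothesis $\tilde\omega\ge\omega$ over $S$ --- so the current is $\ge 0$ there. On $X\setminus\overline{\Omega_t}$ we have $\psi_t\equiv 0$, since $\psi_t\le 0$ everywhere and $\Omega_t=\{\psi_t<0\}$; hence $dd^c\psi_t=0$ and the current equals $\tilde\omega\ge 0$. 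As positivity of a $(1,1)$-current is a local condition, it follows that $\tilde\omega+dd^c\psi_t\ge0$ on all of $X$, i.e.\ $\psi_t\in\Sh(X,\tilde\omega)$, completing the verification.

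The hard part is precisely this last step, and the whole weight of the hypotheses falls on it. The decomposition hands us positivity on $S$ for free, but the comparison $\tilde\omega\ge\omega$ is unavailable off $S$; what saves the argument is the relative compactness of $\Omega_t$ in $S$, which guarantees that $X\setminus S$ lies inside $X\setminus\overline{\Omega_t}$, where $\psi_t\equiv 0$ and the current collapses harmlessly to $\tilde\omega$. I would expect the only subtlety to be making sure the two local positivity statements genuinely patch into global positivity, which is standard once one notes the cover is by open sets and that the current in question is globally defined.
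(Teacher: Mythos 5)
Your proof is correct and takes essentially the same approach as the paper's: both exhibit $\psi_t$ itself as a candidate for the envelope defining $\tilde{\psi}_t$, verifying $\psi_t\in \Sh(X,\tilde{\omega})$ by patching positivity on $S$ (where $\tilde{\omega}\ge\omega$ makes $\omega_{\psi_t}+(\tilde{\omega}-\omega)\ge 0$) with positivity on the region where $\psi_t\equiv 0$, and then concluding $\psi_t\le\tilde{\psi}_t$ and $\tilde{\Omega}_t\subset\Omega_t$. The only cosmetic difference is that the paper phrases the second set of the open cover as a neighbourhood of $X\setminus S$ rather than $X\setminus\overline{\Omega_t}$; these coincide in substance since relative compactness gives $\overline{\Omega_t}\subset S$.
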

\begin{proof}
The statement is trivial if $t<0$ or $t>1$, so suppose $t\in [0,1]$.  From Proposition \ref{prop:HSlelong}(3b)  $\psi_t\in \Sh(X,\omega)$, so the hypothesis implies $\tilde{\psi}_t\in \Sh(S,\tilde{\omega})$.  Since $\Omega_t$ is relatively compact in $S$ we see $\psi_t$ is identically zero on a neighbourhood of $X\setminus S$, and so over this neighbourhood $\tilde{\omega}_{\psi_t} = \tilde{\omega}\ge 0$.  Thus $\psi_t\in \Sh(X,\tilde{\omega})$.  Now Proposition \ref{prop:HSlelong}(3c) gives $\nu_{z_0}(\psi_t)\ge t$, and so $\psi_t$ is a candidate for the envelope defining $\tilde{\psi}_t$, giving $\psi_t\le \tilde{\psi}_t$ from which it follows $\tilde{\Omega}_t\subset \Omega_t$
\end{proof}

\begin{corollary}(Locality)\label{cor:locality}
If $\omega= \tilde{\omega}$ on some open $S\subset X$ and $\Omega_t$ is relatively compact in $S$ then $\psi_t = \tilde{\psi}_t$ and $\Omega_t = \tilde{\Omega}_t$.
\end{corollary}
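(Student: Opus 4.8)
The plan is to deduce this immediately from the Monotonicity Lemma (Lemma~\ref{lem:monotonicity}), applied twice with the roles of $\omega$ and $\tilde{\omega}$ interchanged. The key observation is that the hypothesis $\omega = \tilde{\omega}$ on $S$ gives \emph{both} inequalities $\tilde{\omega}\ge \omega$ and $\omega\ge\tilde{\omega}$ over $S$, so that each form is simultaneously the ``larger'' and the ``smaller'' one in the sense of that lemma. The only point requiring any care is verifying the relative-compactness hypothesis in each of the two directions.

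First I would apply Lemma~\ref{lem:monotonicity} as stated: since $\tilde{\omega}\ge\omega$ on $S$ and $\Omega_t$ is relatively compact in $S$ by assumption, the lemma yields
$$\psi_t \le \tilde{\psi}_t \quad\text{and}\quad \tilde{\Omega}_t \subset \Omega_t.$$
This already gives one of the two inequalities we want, together with the crucial containment $\tilde{\Omega}_t\subset\Omega_t$.

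Next I would run the lemma with $\omega$ and $\tilde{\omega}$ swapped. The required inequality $\omega\ge\tilde{\omega}$ on $S$ again holds by equality, but to invoke the lemma one now needs $\tilde{\Omega}_t$ to be relatively compact in $S$, which is not among our hypotheses. This is exactly where the first application pays off: from $\tilde{\Omega}_t\subset\Omega_t$ together with $\overline{\Omega_t}$ compact and contained in $S$, one gets $\overline{\tilde{\Omega}_t}\subset\overline{\Omega_t}\subset S$, so $\tilde{\Omega}_t$ is relatively compact in $S$ as required. The lemma then delivers the reverse relations $\tilde{\psi}_t\le\psi_t$ and $\Omega_t\subset\tilde{\Omega}_t$. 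Combining the two applications gives $\psi_t=\tilde{\psi}_t$ and $\Omega_t=\tilde{\Omega}_t$, as claimed.

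I do not expect any substantive obstacle here: the entire analytic content has been packaged into Lemma~\ref{lem:monotonicity}, and the only subtlety---that relative compactness of $\tilde{\Omega}_t$ is not assumed but must be extracted before the second application---is resolved by the containment produced in the first half of the argument. Indeed both $\Omega_t$ and $\tilde{\Omega}_t$ lie inside the compact set $\overline{\Omega_t}\subset S$, precisely the region where the two K\"ahler forms coincide, so it is reasonable that the two flows agree there.
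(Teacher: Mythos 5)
Your proposal is correct and is essentially identical to the paper's own proof: one application of Lemma~\ref{lem:monotonicity} yields $\tilde{\Omega}_t\subset\Omega_t$, hence relative compactness of $\tilde{\Omega}_t$ in $S$, which legitimizes the second application with the roles of $\omega$ and $\tilde{\omega}$ reversed. The only difference is that you spell out the relative-compactness step in more detail than the paper does, which is a point in your favor rather than a deviation.
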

\begin{proof}
One application of the previous lemma tells us $\tilde{\Omega}_t\subset \Omega_t$ and so $\tilde{\Omega}_t$ is also relatively compact in $S$.  Then we can apply the lemma again with the roles of $\tilde{\omega}$ and $\omega$ reversed.
\end{proof}

We next express the Hele-Shaw envelope in a slightly different way.     Recall the function $\alpha$ from Lemma \ref{lem:existenceofalphat} that is smooth away from $z_0$, and satisfies 
\begin{align*}
\omega+dd^c\alpha &=\delta_0 \text{ and } \sup_X \alpha = 0 \text{ and }
\alpha = \ln |z-z_0|^2 + O(1) \text{ near }z_0.
\end{align*}

\begin{lemma}\label{lem:alternativeenvelope}
For $t\in [0,1]$,
$$\psi_t = \sup\{\psi\in \Sh(X,(1-t)\omega) : \psi\le -t\alpha\} + t\alpha.$$
\end{lemma}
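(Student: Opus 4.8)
The plan is to realise both envelopes through the same substitution, the fixed shift $\psi\mapsto \psi-t\alpha$, and to check that it carries the competitors of one envelope bijectively onto those of the other. Denote the envelope on the right-hand side by $Q_t:=\sup\{\psi\in\Sh(X,(1-t)\omega):\psi\le -t\alpha\}$, so that the assertion reads $\psi_t=Q_t+t\alpha$. I would prove that $\psi$ is a competitor in the envelope defining $\psi_t$ (that is, $\psi\in\Sh(X,\omega)$, $\psi\le 0$ and $\nu_{z_0}(\psi)\ge t$) if and only if $\chi:=\psi-t\alpha$ is a competitor in the envelope defining $Q_t$. The only facts about $\alpha$ that I need are those supplied by Lemma \ref{lem:existenceofalphat}: the identity $dd^c\alpha=\delta_{z_0}-\omega$, and the asymptotics $\alpha=\ln|z-z_0|^2+O(1)$ near $z_0$. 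I shall also use the fact established in \S\ref{sec:lelong} that $\nu_{z_0}(\psi)\ge t$ is equivalent to the bound $\psi\le t\ln|z-z_0|^2+O(1)$ near $z_0$.

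The inequalities transform trivially: $\psi\le 0$ holds exactly when $\chi=\psi-t\alpha\le -t\alpha$. For subharmonicity, observe that on $X\setminus\{z_0\}$ the current $\delta_{z_0}$ vanishes, so there $\alpha$ is smooth with $dd^c\alpha=-\omega$, and hence $(1-t)\omega+dd^c\chi=\omega+dd^c\psi=\omega_\psi$ classically; thus $\chi$ is $(1-t)\omega$-subharmonic on the punctured surface precisely when $\psi$ is $\omega$-subharmonic there. To extend across $z_0$ I would apply the removable singularity theorem \cite[Theorem 2.7.1]{Klimek}, for which I only need an upper bound near $z_0$. If $\psi$ is a competitor for $\psi_t$, then $\nu_{z_0}(\psi)\ge t$ together with $\alpha=\ln|z-z_0|^2+O(1)$ gives $\chi=\psi-t\alpha\le O(1)$ near $z_0$, so $\chi$ extends to an element of $\Sh(X,(1-t)\omega)$, and together with $\chi\le -t\alpha$ it is a competitor for $Q_t$. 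Conversely, if $\chi$ is a competitor for $Q_t$ then $\chi$ is bounded above near $z_0$, being upper semicontinuous, so $\psi=\chi+t\alpha\le t\ln|z-z_0|^2+O(1)$ is bounded above; it therefore extends to $\Sh(X,\omega)$, this same estimate yields $\nu_{z_0}(\psi)\ge t$, and $\chi\le -t\alpha$ gives $\psi\le 0$. Thus $\psi$ is a competitor for $\psi_t$, and the correspondence is a bijection.

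With the bijection established the identity is purely formal, since the two competitor families differ by the single fixed function $t\alpha$: at every point of $X$,
$$\psi_t=\sup\{\psi:\psi\text{ a competitor for }\psi_t\}=\sup\{\chi+t\alpha:\chi\text{ a competitor for }Q_t\}=Q_t+t\alpha.$$
The one place demanding care is the behaviour at $z_0$: one must be sure that $\chi$ (respectively $\psi$) is a genuine globally defined potential rather than merely subharmonic on the puncture, and that the logarithmic pole of $-t\alpha$ exactly matches the admissible singularity of $\psi$, so that the Lelong constraint and the boundedness constraint convert into each other. Both issues are settled by the precise asymptotics of $\alpha$ together with the removable singularity theorem, and no further estimates are needed.
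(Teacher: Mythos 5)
Your proof is correct, and it rests on the same basic mechanism as the paper's — the shift by $t\alpha$, justified by $\omega+dd^c\alpha=\delta_{z_0}$, the asymptotics $\alpha=\ln|z-z_0|^2+O(1)$, and the removable singularity theorem \cite[Theorem 2.7.1]{Klimek} — but your execution differs from the paper's in a genuine way. The paper's argument is asymmetric: for the inequality $\psi_t-t\alpha\le \sup\{\psi\in\Sh(X,(1-t)\omega):\psi\le-t\alpha\}$ it works with the envelope $\psi_t$ itself, invoking Proposition \ref{prop:HSlelong}(3b,c,d) (that $\psi_t\in\Sh(X,\omega)$, that $\nu_{z_0}(\psi_t)=t$, and that $\omega_{\psi_t}-t\delta_{z_0}\ge0$ as currents) to check that $\psi_t-t\alpha$ is a single competitor for the right-hand envelope; for the reverse inequality it maps competitors to competitors, and there it avoids any extension argument by a convexity trick: $\psi+t\alpha$ is $\omega$-subharmonic because it is the sum of a $(1-t)\omega$-subharmonic function and the $t\omega$-subharmonic function $t\alpha$. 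You instead run a symmetric, competitor-by-competitor correspondence in both directions, using the puncture-and-extend argument twice and never touching Proposition \ref{prop:HSlelong}. What your route buys is logical economy: it needs only Lemma \ref{lem:existenceofalphat}, the attained-supremum characterization of the Lelong number from \S\ref{sec:lelong}, and the removable singularity theorem, so it would go through before any of the regularity or measure-theoretic properties of $\psi_t$ are established. What the paper's route buys is brevity given what is already proved, and its convexity step is a cleaner global argument than a second appeal to the extension theorem. One point you should make explicit in a final write-up: the formal identity $\sup_\chi(\chi+t\alpha)=Q_t+t\alpha$ degenerates at $z_0$ itself, where $t\alpha=-\infty$ for $t>0$; this is harmless because there every competitor for $\psi_t$, and hence $\psi_t$, equals $-\infty$ (by the sub-mean value inequality and the Lelong bound), while $Q_t$ is bounded above near $z_0$ by the maximum principle, so both sides of the asserted identity are $-\infty$ at that one point.
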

\begin{proof}
The statement is trivial when $t=0$, so we assume $t>0$.  Set 
$$u:=\sup\{\psi\in \Sh(X,(1-t)\omega) : \psi\le -t\alpha\}$$ so the goal is to prove $\psi_t = u + t\alpha$.  Clearly $\psi_t-t\alpha\le -t\alpha$ and $(1-t)\omega + dd^c(\psi_t-t\alpha) = \omega_{\psi_t} - t\delta_0\ge 0$ by Proposition \ref{prop:HSlelong}(3b,d).   On the other hand since $\nu_{z_0}(\psi_t)= t$ we have $\psi_t-t\alpha$ is bounded near $z_0$.  Thus $\psi_t-t\alpha$ extends over $z_0$ to a function in $\Sh(X,(1-t)\omega)$ \cite[Theorem 2.7.1]{Klimek} and we conclude $\psi_t-t\alpha\le u$.  

For the other inequality, if $\psi\in \Sh(X,(1-t)\omega)$ satisfies $\psi\le -t\alpha$ then $\psi+t\alpha\le 0$ and $t\alpha\in \Sh(X,t\omega)$ so by convexity $\psi+t\alpha\in \Sh(X,\omega)$.  Moreover any such $\psi$ is bounded above near $z_0$,  so $\nu_{z_0}(\psi + t\alpha) \ge \nu_{z_0} (t\alpha) = t$.  Hence $\psi + t\alpha\le \psi_t$, and taking the supremum over all such $\psi$ gives $u+ t\alpha\le \psi_t$ as required.

\end{proof}


The previous Lemma casts the envelope $\psi_t$ as a (translation of) the solution to an obstacle problem with obstacle $-t\alpha$.  A slight difference between this and the classical theory is that often the obstacle is assumed to be a smooth (or at least bounded) function, but this is easily circumvented as in the following statement.

\begin{lemma}\label{lem:genuineobstacle}
There exists an $f\in \mathcal{C}^{\infty}(X)$ such that
\begin{equation}\sup \{ \psi\in \Sh(X,(1-t)\omega) : \psi\le -t\alpha\} = \sup\{\psi\in \Sh(X,(1-t)\omega) : \psi\le f\}.\label{eq:genuineobstacle}
\end{equation}
\end{lemma}
\renewcommand{\proofname}{Sketch Proof}
\begin{proof}
In a small disc $D$ around $z_0$ on which $\omega = dd^c\zeta$ let $v$ solve $dd^c v=0$ on $D$ and $v|_{\partial D} = -t\alpha + (1-t)\zeta|_{\partial D}$.  If $\psi$ is a candidate for the envelope on the left hand side of \eqref{eq:genuineobstacle} then by the maximum principle $\psi\le v-(1-t)\zeta=:w$ on $D$.   Now $w$ is bounded but $-t\alpha$ tends to infinity near $z_0$, so we can find an $f\in \mathcal{C}^{\infty}(X)$ such that $f=-t\alpha$ on $X\setminus D$ and $f\le -t\alpha$ on $X$ and $w\le f$ on $D$, and it is easy to see then that \eqref{eq:genuineobstacle} holds for this $f$.
\end{proof}
\renewcommand{\proofname}{Proof}

For more advanced information about the flow we will need some smoothness of the Hele-Shaw envelope.  Note  $\psi_t$ will not generally be $\mathcal{C}^{\infty}$, as can be seen in Example \ref{ex:radiallysymmetric}.  However the following says  this is, in some sense, the worst that can happen:

\begin{theorem}[Regularity of Hele-Shaw envelope] \label{lemma:basicHS}  For $t<1$ the Hele-Shaw envelope $\psi_t$ is $\mathcal{C}^{\infty}$ on $\Omega_t\setminus \{z_0\}$ and is $\mathcal{C}^{1,1}$ on $X\setminus \{z_0\}$.
\end{theorem}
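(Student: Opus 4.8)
The plan is to establish the two assertions separately: the interior $\mathcal{C}^{\infty}$ smoothness by elliptic regularity, and the global $\mathcal{C}^{1,1}$ bound via the obstacle-problem reformulation supplied by Lemmas \ref{lem:alternativeenvelope} and \ref{lem:genuineobstacle}. Since $\psi_t\equiv 0$ and $\Omega_t=\emptyset$ for $t\le 0$ by Proposition \ref{prop:HSlelong}(1), both claims are trivial in that range, so I would assume throughout that $t\in(0,1)$.

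For the $\mathcal{C}^{\infty}$ regularity on $\Omega_t\setminus\{z_0\}$, I would start from Proposition \ref{prop:HSlelong}(3d), which gives $\omega_{\psi_t}|_{\Omega_t}=t\delta_{z_0}$. Hence on the punctured domain the current $\omega_{\psi_t}=\omega+dd^c\psi_t$ vanishes, i.e.\ $dd^c\psi_t=-\omega$ there. On a Riemann surface $dd^c$ is a constant multiple of the Laplacian, so this is a linear elliptic equation whose right-hand side $\omega$ is smooth. As $\psi_t$ is $\omega$-subharmonic it is in particular $L^1_{loc}$, hence a distributional solution, and standard elliptic regularity (Weyl's lemma together with bootstrapping) upgrades it to $\psi_t\in\mathcal{C}^{\infty}(\Omega_t\setminus\{z_0\})$.

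For the $\mathcal{C}^{1,1}$ regularity on $X\setminus\{z_0\}$, I would use Lemma \ref{lem:alternativeenvelope} to write $\psi_t=u+t\alpha$ with $u=\sup\{\psi\in\Sh(X,(1-t)\omega):\psi\le -t\alpha\}$, and then Lemma \ref{lem:genuineobstacle} to replace the singular obstacle $-t\alpha$ by a genuine smooth one $f\in\mathcal{C}^{\infty}(X)$, so that $u=\sup\{\psi\in\Sh(X,(1-t)\omega):\psi\le f\}$. Because $t<1$, the background form $(1-t)\omega$ is strictly positive, hence Kähler, and $u$ is exactly the envelope of $(1-t)\omega$-subharmonic functions lying below a smooth obstacle. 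Writing $(1-t)\omega=dd^c g$ locally for smooth $g$, one sees $u+g$ is the envelope of ordinary subharmonic functions below the smooth obstacle $f+g$; since $\dim_{\mathbb{C}}X=1$ this is precisely the classical (real, two-dimensional) obstacle problem, whose solutions are known to be $\mathcal{C}^{1,1}$ (and no better in general). Invoking this gives $u\in\mathcal{C}^{1,1}(X)$. Finally, since $\alpha\in\mathcal{C}^{\infty}(X\setminus\{z_0\})$ by Lemma \ref{lem:existenceofalphat}, the identity $\psi_t=u+t\alpha$ yields $\psi_t\in\mathcal{C}^{1,1}(X\setminus\{z_0\})$; the point $z_0$ must be excluded because of the logarithmic singularity of $\alpha$ there.

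The main obstacle is the $\mathcal{C}^{1,1}$-regularity of the envelope $u$, which is the one genuinely nontrivial analytic input: it is the compact one-dimensional instance of the same phenomenon behind the $\mathcal{C}^{1,1}$ results cited earlier (Bedford--Taylor, Krylov, Chu--Tosatti--Weinkove), and it is sharp, as the radial Example \ref{ex:radiallysymmetric} already exhibits a $\psi_t$ that is not $\mathcal{C}^2$. By contrast, the interior smoothness is routine once one identifies the equation $\omega_{\psi_t}=0$ off $z_0$ and appeals to elliptic regularity.
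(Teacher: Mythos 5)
Your treatment of the first half is fine and is exactly the paper's: on $\Omega_t\setminus\{z_0\}$ the envelope satisfies $\omega_{\psi_t}=0$, so locally $\psi_t$ plus a smooth potential is harmonic, hence smooth.

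The $\mathcal{C}^{1,1}$ half has a genuine gap at its pivotal step. Having written $\psi_t=u+t\alpha$ with $u=\sup\{\psi\in\Sh(X,(1-t)\omega):\psi\le f\}$ (Lemmas \ref{lem:alternativeenvelope} and \ref{lem:genuineobstacle}), you assert that on a chart where $(1-t)\omega=dd^cg$ the function $u+g$ \emph{is} the envelope of ordinary subharmonic functions below $f+g$, and that this is ``precisely the classical obstacle problem''. Neither claim holds as stated. Candidates for the local, boundary-condition-free envelope on a chart $U$ need not extend to global $(1-t)\omega$-subharmonic functions on the compact surface $X$, so that local envelope is in general strictly larger than $u+g$: for instance, the largest subharmonic minorant of the obstacle $-|z|^2$ on the unit disc is the constant $-1$, whereas a global envelope restricted to a chart can perfectly well pass below any such level, being forced down by the obstacle elsewhere on $X$. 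Moreover, even the correct local model is not literally ``the classical obstacle problem'', which is posed on a bounded planar domain \emph{with prescribed boundary data}; the theorem of Caffarelli--Kinderlehrer \cite{CaffarelliK} applies to solutions of that variational inequality, not directly to a global envelope on a compact surface. To repair the argument one must (i) establish the complementarity properties of $u$ --- it is bounded, $(1-t)\omega$-subharmonic, $\le f$, and $(1-t)\omega+dd^cu=0$ on the open set $\{u<f\}$, this last by the harmonic-lift/Perron argument used in the proof of Proposition \ref{prop:HSlelong} --- and then (ii) show by a maximum-principle comparison that on each small disc $D'\Subset U$ the function $u+g$ coincides with the solution of the obstacle problem on $D'$ with obstacle $f+g$ and boundary values given by $u+g$ itself; only after this localization does \cite{CaffarelliK} yield local $\mathcal{C}^{1,1}$ bounds, which patch over compact $X$ and give $\psi_t=u+t\alpha\in\mathcal{C}^{1,1}(X\setminus\{z_0\})$. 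This reduction is exactly the ``somewhat technical'' content the paper declines to write out: it is carried out for $X=\mathbb{P}^1$ in \cite[Proposition 1.1]{RWHarmonicDiscs}, while for general compact $X$ the paper argues quite differently (reducing to integral $\omega$ and quoting the $\mathcal{C}^{1,1}$ regularity of the envelopes of \cite{Berman} and \cite{RossNystromEnvelopes} for rational $t$, then approximating in $t$); indeed the bibliographical remarks in \S\ref{sec:bibremarksHS} describe your direct route only as one that ``may well be possible''. So your outline identifies the right objects and the right classical input, but the crucial identification is asserted precisely where it needs to be proved.
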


\begin{proof}
The first statement is clear as $\omega_{\psi_t} =0$ on $\Omega_t\setminus \{z_0\}$ and harmonic functions are smooth.   The deeper statement is the second, which is somewhat technical and so we omit the details.  When $X=\mathbb P^1$ the result we want may be reduced to known regularity of solutions of the obstacle problem for the Laplacian for domains in $\mathbb R^2$   \cite{CaffarelliK} due to Cafarelli-Kinderlehrer, and the reader interested in this reduction will find details in \cite[Proposition 1.1]{RWHarmonicDiscs}.   For general Riemann surfaces we need more machinery.   For instance, there is no loss in assuming $\omega$ is integral, at which point the $\psi_t$ is among the envelopes considered by \cite{Berman} and \cite{RossNystromEnvelopes} where the desired $\mathcal{C}^{1,1}$ regularity is proved (strictly speaking the cited results only apply when $t$ is rational, but the proofs given there give uniform estimates of the $\mathcal{C}^{1,1}$ under perturbations of $t$ and the result for all $t\in (0,1)$ then follows by approximation).       We refer the reader to \S\ref{sec:bibremarksHS} for further regularity results in this direction.
\end{proof}

\begin{corollary}
For $t<1$ the boundary $\partial \Omega_t$ of the weak Hele-Shaw domain has measure zero.
\end{corollary}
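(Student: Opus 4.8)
The plan is to recognise $\partial\Omega_t$ as the free boundary of a zero-obstacle problem and to show, by a non-degeneracy estimate, that no point of $\partial\Omega_t$ is a Lebesgue density point of the contact set; the Lebesgue density theorem then forces $\partial\Omega_t$ to be null. First I would dispose of the trivial cases: for $t\le 0$ the domain $\Omega_t$ is empty by Proposition \ref{prop:HSlelong}(1), so I assume $0<t<1$. Since $z_0\in\Omega_t$ lies in the interior (Corollary \ref{cor:open}), the free boundary satisfies $\partial\Omega_t\subset X\setminus\{z_0\}$, where by Theorem \ref{lemma:basicHS} we have $\psi_t\in\mathcal{C}^{1,1}$. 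Setting $v:=-\psi_t\ge 0$, so that $\Omega_t=\{v>0\}$ and $C_t:=X\setminus\Omega_t=\{v=0\}$, Proposition \ref{prop:HSlelong}(3d) gives $\omega_{\psi_t}=0$ on $\Omega_t\setminus\{z_0\}$, which reads $dd^cv=\omega$ there; in local coordinates this means $\Delta v\ge c_0>0$ on $\Omega_t$ for a constant $c_0$, the strict positivity of the K\"ahler form $\omega$ entering crucially here. Thus $v$ solves an obstacle problem with zero obstacle and strictly positive source, and being $\mathcal{C}^1$ with a genuine minimum on $C_t$ it satisfies $\nabla v(x_0)=0$ for every $x_0\in C_t\setminus\{z_0\}$.

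The heart of the matter, which I expect to be the main obstacle, is the non-degeneracy estimate: there is a constant $c>0$, independent of the point, such that
$$\sup_{B_r(x_0)} v \ge c\,r^2 \quad\text{for all } x_0\in\partial\Omega_t \text{ and all small } r>0.$$
I would establish this by a barrier comparison. Fix an interior point $x_1\in\Omega_t$ and consider $w(x):=v(x)-\tfrac{c_0}{4}|x-x_1|^2$ on $\Omega_t\cap B_r(x_1)$; since $\Delta v\ge c_0$ there, $w$ is subharmonic and attains its maximum on the boundary. On the portion of the boundary lying in $\partial\Omega_t$ one has $v=0$ and hence $w\le 0<v(x_1)=w(x_1)$, so the maximum is taken on $\Omega_t\cap\partial B_r(x_1)$; this produces a point where $v\ge\tfrac{c_0}{4}r^2$. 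Letting $x_1\to x_0$ yields the estimate at the free boundary point $x_0$. The points needing care are that $\omega$ (hence $c_0$) is bounded below on a fixed neighbourhood of the compact set $\partial\Omega_t$, and that all of this takes place away from the pole $z_0$.

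To conclude I convert non-degeneracy into a density statement. Choosing $y$ with $|y-x_0|\le r$ and $v(y)\ge c\,r^2$, the $\mathcal{C}^{1,1}$ bound $|\nabla v(y)|\le M|y-x_0|\le Mr$ (using $v(x_0)=0$ and $\nabla v(x_0)=0$) forces $v$ to stay positive on a ball $B_\rho(y)$ with $\rho$ a fixed fraction of $r$; hence $B_\rho(y)\subset\Omega_t$ and $|\Omega_t\cap B_{2r}(x_0)|\ge\theta\,|B_{2r}(x_0)|$ for a uniform $\theta>0$. Thus $\Omega_t$ has positive lower density at every point of $\partial\Omega_t$, so no such point can be a Lebesgue density point of $C_t\supseteq\partial\Omega_t$. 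Since almost every point of the measurable set $C_t$ is one of its density points, $\partial\Omega_t$ must have Lebesgue measure zero, which is the assertion.
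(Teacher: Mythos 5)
Your proof is correct and follows essentially the same route as the paper's: the non-degeneracy estimate $\sup_{B_r(x_0)} v \ge c\,r^2$ via a barrier $v - \epsilon|x-x_1|^2$ and the maximum principle on $\Omega_t\cap B_r(x_1)$ with interior points tending to $x_0$, then the $\mathcal{C}^{1,1}$ gradient bound (with $\nabla v=0$ on the contact set) to place a ball $B_\rho(y)\subset\Omega_t$, and finally the Lebesgue density theorem. The only cosmetic difference is that you apply the density theorem to the contact set $C_t$ rather than directly to $\partial\Omega_t$ as the paper does; both conclude identically.
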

\begin{proof}
Let $u:= -\psi_t$ so $\Omega_ t=\{ u>0\}$ and by the previous Theorem $u$ is $\mathcal{C}^{1,1}$ in a neighbourhood $U$ of $\partial \Omega_t$.  Since $\omega_{\psi_t}|_U=0$, we have $\Delta u\ge \lambda>0$ on $U$.  

Fix $x\in\partial \Omega_t$.   We first claim there is an $\epsilon>0$ such that for all $r>0$ sufficiently small there is a $y$ with
\begin{equation}
u(y) \ge \epsilon r^2 \text{ and } y\in B_r(x).\label{eq:measurezeroclaim}\end{equation}
To prove this, we may work locally near $x$ and assume our distance function is the usual Euclidean one.   Consider a sequence of points $x_n\in \Omega_t$ converging to $x$ as $n$ tends to infinity.  For small $r>0$ consider $n$ sufficiently large so $B_r(x_n)\cap \partial \Omega$ is non-empty.  Set
$$ v(z) : = u(z) - u(x_n) - \epsilon |z-x_n|^2$$
for $\epsilon \ll \lambda$.  Then $v(x_n)=0$ and $\Delta v\ge 0$ on $B_r(x_n)\cap \Omega_t$.   Thus by the maximum principle applied to $v$ on $B_r(x_n)\cap \Omega_t$ we know there is a $y_n\in \partial (B_r(x_n)\cap \Omega_t)$ with $v(y_n)\ge 0$.  Now $\partial (B_r(x_n)\cap \Omega_t) \subset \partial \Omega_t \cup \partial B_r(x_n)$, and if $y_n\in \partial \Omega_t$ then $u(y_n)=0$, so $v(y_n)<0$ which is absurd.  Hence $y_n\in \partial B_r(x_n)$, so in fact $|y_n-x_n|=r$ and $v(y_n)\ge 0$ becomes
$$ u(y_n) \ge u(x_n) + \epsilon r^2.$$
Letting $n$ tend to infinity and taking a subsequence, we deduce there exists a $y\in X$ satisfying \eqref{eq:measurezeroclaim} as claimed.

We next claim there exists a $c\in (0,1)$ such that for any sufficiently small $r>0$ there exists a $y \in B_r(x)$ and
\begin{equation} 
B_{c r} (y) \subset \Omega_t.\label{eq:measurezeroclaimII}
\end{equation}
To see this let $y$ be as in  \eqref{eq:measurezeroclaim}.  The Lipschitz bound on $\nabla u$ near $\partial \Omega_t$, and the fact that $u\equiv 0$ and $\nabla u \equiv 0$ on $\partial \Omega_t$ implies that there is a bound of the form $|\nabla u(z)|\le Mr $ for $dist(z,\partial \Omega_t)\le r$.   Thus if $|z-y|<c r$ we have
$$ u(z) \ge u(y) - M c r^2\ge (\epsilon -Mc)r^2$$
which is strictly positive as long as we take $c< M/\epsilon$.  Thus $B_{c r}(y)\subset \Omega_t$ proving \eqref{eq:measurezeroclaimII}.

So, letting $|A|$ denote the Lebesgue measure of a set $A$, this implies
$$ | B_r(x)\cap \partial \Omega_t| \le |B_r(x)| - |B_{c r}(y)|  = O((1-c^2) r^2).$$
Thus the Lebesgue density of $\partial \Omega$ at the point $z$ satisfies
$$ \delta(x):= \lim_{r\to 0} \frac{| B_r(x)\cap \partial \Omega_t|}{|B_r(x)|} <1.$$
But the Lebesgue Density Theorem \cite[Theorem 5.3.1]{Rudin} says $\delta(y)=1$ for almost all point $y\in \partial \Omega$, and thus $\partial \Omega$ must have measure zero as claimed.
\end{proof}

\begin{corollary}\label{cor:indofphi}
For all $t\in [0,1)$ it holds that
\begin{equation}
\omega_{\psi_t} = (1-\chi_{\Omega_t}) \omega +t\delta_{z_0}.\label{eq:laplaceenvelope2}
\end{equation}
In particular
\begin{equation}\int_{\Omega_t}\omega=t.\label{eq:volumeHSflow}\end{equation}
\end{corollary}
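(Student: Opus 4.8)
The plan is to compute the positive measure $\mu:=\omega_{\psi_t}$ by decomposing $X$ into the three disjoint Borel pieces $\Omega_t$, $\partial\Omega_t$ and $X\setminus\overline{\Omega_t}$, and identifying $\mu$ on each. On $\Omega_t$ the answer is already available from Proposition \ref{prop:HSlelong}(3d), namely $\mu|_{\Omega_t}=t\delta_{z_0}$. On the open set $X\setminus\overline{\Omega_t}$ I would use that $\psi_t\le 0$ everywhere together with the definition $\Omega_t=\{\psi_t<0\}$ to conclude that $\psi_t\equiv 0$ there; hence $dd^c\psi_t=0$ on this open set and $\mu|_{X\setminus\overline{\Omega_t}}=\omega$.

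The only remaining piece is the boundary $\partial\Omega_t$, and this is where I expect the main (albeit mild) subtlety to lie: a priori $\mu$ could carry singular mass concentrated on $\partial\Omega_t$. To rule this out I would invoke the $\mathcal{C}^{1,1}$ regularity of $\psi_t$ on $X\setminus\{z_0\}$ from Theorem \ref{lemma:basicHS}, noting that $z_0\in\Omega_t$ (by Corollary \ref{cor:open}) so that $z_0\notin\partial\Omega_t$. This makes $dd^c\psi_t$ a form with $L^\infty$ coefficients in a neighbourhood of $\partial\Omega_t$, so that $\mu$ is absolutely continuous with respect to Lebesgue measure there. Combined with the preceding corollary, which gives $|\partial\Omega_t|=0$, this forces $\mu(\partial\Omega_t)=0$.

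Assembling the three pieces yields $\mu=t\delta_{z_0}+\omega|_{X\setminus\overline{\Omega_t}}$. Since $\omega$ is a smooth form it charges no set of Lebesgue measure zero, so $\omega(\partial\Omega_t)=0$ and we may rewrite $\omega|_{X\setminus\overline{\Omega_t}}=\omega|_{X\setminus\Omega_t}=(1-\chi_{\Omega_t})\omega$, which gives \eqref{eq:laplaceenvelope2}.

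Finally, for \eqref{eq:volumeHSflow} I would integrate this identity over the compact $X$. As $\psi_t\in L^1(X)$ the current $dd^c\psi_t$ is exact, so $\omega_{\psi_t}$ and $\omega$ are cohomologous and $\int_X\omega_{\psi_t}=\int_X\omega=1$. Integrating the right-hand side of \eqref{eq:laplaceenvelope2} gives $\int_{X\setminus\Omega_t}\omega+t$, and comparing with the total mass $1$ yields $\int_{X\setminus\Omega_t}\omega=1-t$, hence $\int_{\Omega_t}\omega=t$. The case $t=0$ is covered trivially, since then $\psi_t\equiv 0$ and $\Omega_t=\emptyset$ by Proposition \ref{prop:HSlelong}(1).
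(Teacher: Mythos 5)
Your proposal is correct and follows essentially the same route as the paper's proof: both decompose $X$ into $\Omega_t$, its boundary, and the open exterior, use Proposition \ref{prop:HSlelong}(3d) on $\Omega_t$ and the vanishing of $\psi_t$ on the exterior, kill the boundary contribution via the $\mathcal{C}^{1,1}$ regularity (absolute continuity of $\omega_{\psi_t}$) together with $|\partial\Omega_t|=0$, and deduce \eqref{eq:volumeHSflow} from $\int_X\omega_{\psi_t}=\int_X\omega$. The only cosmetic differences are your explicit remark that $z_0\notin\partial\Omega_t$ and your separate treatment of $t=0$, both of which are harmless refinements of the same argument.
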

\begin{proof}
Since $\psi_t$ is $\mathcal{C}^{1,1}$, $\omega_{\psi_t}$ is absolutely continuous with respect to $\omega$, thus $\partial \Omega_t$ having zero measure with respect to $\omega$ means the same is true for $\omega_{\psi_t}$. We thus get $$\omega_{\psi_t}=\chi_{\Omega_t}\omega_{\psi_t}+(1-\chi_{\overline{\Omega}_t})\omega_{\psi_t}.$$
We have already seen $\omega_{\psi_t}=\delta_{z_0}$ on $\Omega_t$. By definition $\psi_t=0$ on $\Omega_t^c$ and hence on $(\Omega^c_t)^{\circ}$. As this set is open $dd^c \psi_t = 0$ there, giving $$(1-\chi_{\overline{\Omega}_t})\omega_{\psi_t}=(1-\chi_{\overline{\Omega}_t})\omega.$$ 
Again using that $\partial \Omega_t$ has zero measure yields $$(1-\chi_{\overline{\Omega}_t})\omega=(1-\chi_{{\Omega}_t})\omega$$ and hence \eqref{eq:laplaceenvelope2}. The second statement follows from this as
$$ \int_X \omega = \int_X \omega_{\psi_t} =\int_X ((1-\chi_{\Omega_t}) \omega +t\delta_{z_0})=  \int_{\Omega_t^c} \omega + t.$$
\end{proof}

We end this section with a final convexity property satisfied by the Hele-Shaw envelopes.  Although simple, it is essential in ensuring no information is lost when we later take the Legendre transform.

\begin{lemma}[Convexity]\label{lem:convexity}
For any given $z$ the function  $t\mapsto \psi_t(z)$ is concave, decreasing and continuous in $t$.
\end{lemma}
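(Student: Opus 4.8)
The plan is to prove the three assertions separately, with concavity being the one carrying real content; monotonicity is immediate from the definition and continuity will be extracted from concavity. Throughout I would exploit that, by Proposition~\ref{prop:HSlelong}(3b,c), the envelope $\psi_t$ is itself admissible (it lies in $\Sh(X,\omega)$, is $\le 0$, and has $\nu_{z_0}(\psi_t)=t$), so the supremum in its definition is attained and no information is lost to upper-semicontinuous regularisation.

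For monotonicity, I would observe that the admissible classes are nested: if $t_1\le t_2$ then any $\psi\in\Sh(X,\omega)$ with $\psi\le 0$ and $\nu_{z_0}(\psi)\ge t_2$ also has $\nu_{z_0}(\psi)\ge t_1$, so the family defining $\psi_{t_2}$ sits inside the one defining $\psi_{t_1}$. Passing to pointwise suprema gives $\psi_{t_2}\le\psi_{t_1}$, hence $t\mapsto\psi_t(z)$ is decreasing. For concavity, fix $t_1,t_2\in[0,1]$ and $\lambda\in[0,1]$ and set $t:=\lambda t_1+(1-\lambda)t_2$. The key point is that the admissible class is closed under convex combinations: if $\psi_1,\psi_2$ are admissible for $t_1,t_2$ and $\psi:=\lambda\psi_1+(1-\lambda)\psi_2$, then $\psi\le 0$, while $\omega+dd^c\psi=\lambda\,\omega_{\psi_1}+(1-\lambda)\,\omega_{\psi_2}\ge 0$ gives $\psi\in\Sh(X,\omega)$. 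Since the supremum defining the Lelong number is attained (as recorded in \S\ref{sec:lelong}), we have $\psi_i\le\nu_{z_0}(\psi_i)\ln|z-z_0|^2+O(1)$ near $z_0$, and combining these linearly yields $\psi\le t\ln|z-z_0|^2+O(1)$, so $\nu_{z_0}(\psi)\ge t$. Thus $\psi$ is admissible for $t$ and $\psi\le\psi_t$ pointwise. Taking the maximal competitors $\psi_1=\psi_{t_1}$ and $\psi_2=\psi_{t_2}$ gives $\lambda\psi_{t_1}(z)+(1-\lambda)\psi_{t_2}(z)\le\psi_t(z)$ for every $z$, which is exactly concavity. (When $t_1$ or $t_2$ lies outside $[0,1]$ the inequality is either trivial, since some term is $-\infty$, or reduces to the explicit values in Proposition~\ref{prop:HSlelong}(1),(2).)

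Finally, continuity I would deduce from concavity. For fixed $z\neq z_0$ the function $t\mapsto\psi_t(z)$ is finite on $(-\infty,1]$ — it vanishes for $t\le 0$ by Proposition~\ref{prop:HSlelong}(1) and is locally bounded away from $z_0$ for $t\in[0,1]$ by (3a) — and a finite concave function is continuous on the interior of its domain of finiteness, giving continuity on $(-\infty,1)$ (in particular at the interior point $t=0$). The only delicate point, which I expect to be the one real obstacle, is the left-hand limit at $t=1$, where concavity alone would permit a downward jump. Here I would argue that $\psi_{1^-}:=\lim_{t\to1^-}\psi_t$ is a decreasing limit of functions in $\Sh(X,\omega)$, hence again $\omega$-subharmonic (and not identically $-\infty$, being finite off $z_0$); since $\psi_{1^-}\le\psi_t$ forces $\nu_{z_0}(\psi_{1^-})\ge\nu_{z_0}(\psi_t)=t$ for every $t<1$, we get $\nu_{z_0}(\psi_{1^-})\ge 1$, so $\psi_{1^-}$ is admissible for $t=1$ and therefore $\psi_{1^-}\le\psi_1$; as monotonicity gives the reverse inequality $\psi_{1^-}\ge\psi_1$, they coincide, establishing left-continuity at $t=1$. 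Beyond $t=1$ the envelope degenerates to $-\infty$ by Proposition~\ref{prop:HSlelong}(2), so the continuity asserted is understood on the range where $\psi_t$ is finite.
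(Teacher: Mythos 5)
Your proposal is correct and follows essentially the same route as the paper: concavity is obtained by observing that $\lambda\psi_{t_1}+(1-\lambda)\psi_{t_2}$ is itself a candidate for the envelope defining $\psi_t$, and continuity comes from combining concavity with left-continuity, which is established by noting that the decreasing limit $\lim_{t\to s^-}\psi_t$ is again $\omega$-subharmonic and admissible. The only (harmless) difference is one of packaging: the paper runs the left-limit argument at every $s$ and then invokes concavity, whereas you invoke concavity on the interior and reserve the left-limit argument for the endpoint $t=1$; your explicit remark that continuity is to be understood where $\psi_t(z)$ is finite (i.e.\ away from $z=z_0$) is a point the paper leaves implicit.
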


\begin{proof}
It is clear $\psi_t$ is concave in $t$ since if $t=at_1+(1-a)t_2$ where $a\in [0,1)$ and $t_1,t_2\in [0,1]$ then $$a\psi_{t_1}+(1-a)\psi_{t_2}\leq \psi_t$$ simply because the left hand side is clearly in $\Sh(X,\omega)$,  has at least Lelong number $t$ at $z_0$ and is bounded above by $0$.   That $\psi_t(z)$ decreases with $t$ is obvious, and this implies $\lim_{t \to s-}\psi_t$ is $\omega$-subharmonic and thus one sees $$\lim_{t \to s-}\psi_t=\psi_s,$$ i.e. $\psi_t$ is left-continuous in $t$.  Combined with concavity this implies continuity.  
\end{proof}

\subsection{Basic properties of the Hele-Shaw flow in the plane}\label{sec:basicplane}

We will also want to discuss the weak Hele-Shaw on the plane.  So suppose in this section $X=\mathbb C$ and $z_0$ is the origin.  Our K\"ahler metric $\omega$ can then be written as
$$ \omega = dd^c \phi$$
for some smooth function $\phi:\mathbb C\to \mathbb R$.  We assume throughout the growth condition that for all $t>0$
\begin{equation}\phi(z) \ge t\ln |z|^2 + O(1) \text{ for } |z|\gg 0\label{eq:growth}
\end{equation}
So, for example, this clearly holds for the standard K\"ahler metric on $\mathbb C$ for which $\phi(z) = |z|^2$.  

We are not assuming that the plane has finite area with respect to $\omega$, and so we need to add a word as to why the basic properties of the Hele-Shaw flow from the previous section continue to hold.  Given any $t>0$ consider the function
$$ \alpha_t = t\ln |z|^2 - \phi.$$
Clearly $\alpha_t\in \Sh(\mathbb C,\omega)$ and $\omega_{\alpha_t} = t\delta_0$ and $\nu_0(\alpha_t) = t$.  On the other hand the growth condition \eqref{eq:growth} implies $\alpha_t$ is bounded as $|z|$ tends to infinity, so subtracting a constant we may suppose $\alpha_t\le 0$.  Thus we may use $\alpha_t$ to replace the function provided  by Lemma \ref{lem:existenceofalphat}.   Using this one can check the proofs of the basic properties of the Hele-Shaw envelope go through essentially unchanged and give the following.

\begin{proposition}[Basic Properties of the weak Hele-Shaw flow in the Plane]\label{prop:basicplane}
Still assuming the growth condition \eqref{eq:growth} holds, for all $t>0$ we have (1) $\psi_t\in \Sh(\mathbb C,\omega)$ is locally bounded away from $z_0$ (2) $\nu_0(\psi_t)\ge t$ (3) $\psi_t\in \mathcal{C}^{1,1}(\mathbb C\setminus \{0\})$ (4) $\omega_{\psi_t} = (1-\chi_{\Omega_t}) \omega + t\delta_0$ (5) $\int_{\Omega_t} \omega = t$ (6) $\Omega_t$ is open, connected, contains the origin and $\partial \Omega_t$ has measure zero.  \end{proposition}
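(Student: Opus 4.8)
The plan is to re-run, essentially verbatim, the arguments used for the compact case in Proposition~\ref{prop:HSlelong} and its corollaries, making the single substitution of the function $\alpha_t := t\ln|z|^2 - \phi$ in place of the auxiliary function $\alpha$ produced by Lemma~\ref{lem:existenceofalphat}. Here $\alpha_t$ is normalised by an additive constant so that $\alpha_t\le 0$, which is possible precisely because the growth condition \eqref{eq:growth} forces $\alpha_t$ to be bounded above as $|z|\to\infty$. One checks directly that $\alpha_t\in \Sh(\mathbb C,\omega)$, that $\omega_{\alpha_t}=t\delta_0$, and that $\nu_0(\alpha_t)=t$, so $\alpha_t$ is an admissible competitor in the envelope defining $\psi_t$. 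This is exactly the point at which finiteness of the total area was used in the compact case to manufacture a competitor, so producing $\alpha_t$ is the key first step.

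Granting this, items (1)--(3) follow as before and are all \emph{local} in nature, so no new difficulty arises. The lower bound $\psi_t\ge \alpha_t$ shows $\psi_t$ is locally bounded away from the origin, and being an upper-semicontinuous regularisation of $\omega$-subharmonic functions it lies in $\Sh(\mathbb C,\omega)$, giving (1). For (2), the delicate issue is that the $O(1)$ in the definition of the Lelong number may a priori depend on the competitor; this is handled exactly as in the proof of Proposition~\ref{prop:HSlelong}, by solving a classical Dirichlet problem for the Laplacian on a small ball around $z_0$ to build a uniform upper barrier. Since that construction takes place in a fixed neighbourhood of $z_0$ it transfers unchanged. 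The smoothness in (3) on $\Omega_t\setminus\{0\}$ follows because $\psi_t$ is $\omega$-harmonic there, while the $\mathcal C^{1,1}$ regularity up to $\partial\Omega_t$ is again local and reduces, as in Theorem~\ref{lemma:basicHS}, to the Caffarelli--Kinderlehrer regularity for the obstacle problem in the plane.

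The genuinely new point, and the one I expect to be the main obstacle, is controlling the behaviour at infinity, where the compact proof relied on $\int_X\omega=1$. The crux is to show $\Omega_t$ is bounded, equivalently that $\psi_t\equiv 0$ outside a compact set. I would establish this by comparison: using \eqref{eq:growth} one selects a radially symmetric Kähler form $\hat\omega\le\omega$ still satisfying the growth hypothesis, for which Example~\ref{ex:radiallysymmetric} produces an \emph{explicit} bounded disc $\hat\Omega_t$. The monotonicity argument of Lemma~\ref{lem:monotonicity}, applied on a large ball containing $\hat\Omega_t$ (it needs only relative compactness and so adapts to the plane), then yields $\Omega_t\subset\hat\Omega_t$, so $\Omega_t$ is bounded. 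Verifying this comparison carefully, in particular the existence of a suitable radial $\hat\omega$, is where the real work lies. Once boundedness is in hand, openness and $0\in\Omega_t$ in (6) are immediate from semicontinuity and $\psi_t(0)=-\infty$, the connectedness argument of Corollary~\ref{cor:open} applies on each now-bounded component, and the measure-zero statement follows as in the corollary to Theorem~\ref{lemma:basicHS}.

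Finally, boundedness lets the computation of Corollary~\ref{cor:indofphi} be carried out locally. Identity (4), $\omega_{\psi_t}=(1-\chi_{\Omega_t})\omega+t\delta_0$, is a pointwise identity of measures and follows exactly as there from the $\mathcal C^{1,1}$ regularity, the relation $\omega_{\psi_t}=t\delta_0$ on $\Omega_t$, and $\partial\Omega_t$ having measure zero. For (5), since $\psi_t$ vanishes outside a compact set, Stokes gives $\int_{\mathbb C}dd^c\psi_t=0$; as $dd^c\psi_t=\omega_{\psi_t}-\omega=-\chi_{\Omega_t}\omega+t\delta_0$ is compactly supported and integrable, integrating yields $0=-\int_{\Omega_t}\omega+t$, that is $\int_{\Omega_t}\omega=t$, the plane analogue of \eqref{eq:volumeHSflow}.
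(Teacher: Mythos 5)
Your first step is precisely the paper's proof: the entire content of the paper's argument is that $\alpha_t:=t\ln|z|^2-\phi$, normalised by a constant to be $\le 0$ (which uses \eqref{eq:growth}), can stand in for the function $\alpha$ of Lemma \ref{lem:existenceofalphat}, after which the local arguments behind Proposition \ref{prop:HSlelong}, Corollary \ref{cor:open}, Theorem \ref{lemma:basicHS} and Corollary \ref{cor:indofphi} are repeated; your treatment of (1)--(4) matches this. The gap is in the step you yourself call the crux: the claimed existence of a radially symmetric K\"ahler form $\hat\omega\le\omega$ still satisfying the growth hypothesis. The condition \eqref{eq:growth} constrains the \emph{potential} $\phi$, not the \emph{density} of $\omega$, and the super-logarithmic growth of $\phi$ along a given ray can be produced entirely by mass of $\omega$ lying far from that ray. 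Concretely, let $g$ be a smoothing of $y\mapsto(|y|-1)_+^2$, let $\phi_0$ solve $\Delta\phi_0=e^{-|z|^2}$, and set $\phi(z)=\tfrac14\operatorname{Re}(z^2)+\tfrac12 g(\operatorname{Im}z)+\epsilon_0\phi_0(z)$. Then $\omega=dd^c\phi$ is K\"ahler, and since $(r|\sin\theta|-1)_+^2\ge r^2\sin^2\theta-2r$ one checks $\phi(re^{i\theta})\ge \tfrac14 r^2-O(r)$ in \emph{every} direction, so \eqref{eq:growth} holds; yet every circle $\{|z|=r\}$ meets the strip $\{|\operatorname{Im}z|<1\}$, where the density of $\omega$ is $O(\epsilon_0 e^{-r^2})$. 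Hence \emph{any} radial K\"ahler form $\hat\omega\le\omega$ has total mass at most $C\epsilon_0$, which can be made smaller than the given $t$. But for a radial form the analysis of Example \ref{ex:radiallysymmetric} produces a bounded disc $\hat\Omega_t$ only for $t$ below the total mass (equivalently, a radial form satisfies \eqref{eq:growth} if and only if its total mass is infinite), so for the given $t$ there is no admissible comparison form and the appeal to Lemma \ref{lem:monotonicity} returns nothing.

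This is not a repairable detail within your strategy, because boundedness of $\Omega_t$ is not something the hypotheses give you: note that, in contrast with the compact case, item (6) of the proposition deliberately does not assert it, and in thin-channel examples as above (permeability $\kappa=1/\text{density}$ enormous along the strip, whose total $\omega$-area is $O(\epsilon_0)$) there is every reason to expect $\Omega_t$ to be unbounded for finite $t$. Since your proofs of (5), of connectedness in (6), and the concluding Stokes argument all route through compact support of $\psi_t$, they are left unsupported; by contrast the paper never raises boundedness, and items (1)--(4) together with openness and measure-zero boundary in (6) are purely local, so they survive exactly as you say. The genuinely global statements need plane-specific arguments in place of compactness. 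For instance, one half of (5) follows from (4) and Jensen's formula: the circular averages obey $\frac{d}{d(\ln r^2)}\bar\psi_t(r)=(dd^c\psi_t)(B_r)=t-\omega(\Omega_t\cap B_r)$, so $\omega(\Omega_t)<t$ would force $\bar\psi_t(r)\to+\infty$, contradicting $\psi_t\le0$; the reverse inequality, and connectedness on possibly unbounded components, require a maximum principle valid on unbounded planar domains (Phragm\'en--Lindel\"of, i.e.\ parabolicity of $\mathbb C$) or an exhaustion argument rather than Stokes' theorem. To be fair to you, the paper is terse exactly here --- it asserts the compact-case proofs go through ``essentially unchanged'' --- but the one item for which that is not literally true, namely (5), is also the one your proposal fails to reach.
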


Furthermore,  analogs of the monotonicity and locality statements (Lemma \ref{lem:monotonicity} and Corollary \ref{cor:locality}) hold; precise statements are left to the reader.  Of course one can relate the planar case and the compact case by thinking of $\mathbb C\subset \mathbb P^1$ in the standard way.  Given any large $R$ one can find a K\"ahler form $\tilde{\omega}$ on $\mathbb P^1$ that agrees with $\omega$ on the ball $S: = \{ |z|<R\}$.  Then, with an argument as in the proof of the monotonicity statement (Lemma \ref{lem:monotonicity}) if one assumes the weak Hele-Shaw flow domains $\Omega_t$ and $\tilde{\Omega_t}$ induced by $\omega$ and $\tilde{\omega}$ respectively  are both are relatively compact in $S$, then $\Omega_t = \tilde{\Omega}_t$.     In this way one easily passes from statements about the Hele-Shaw on the plane to corresponding statements on $\mathbb P^1$.

\subsection{Bibliographical remarks}\label{sec:bibremarksHS}

For a much more comprehensive survey on the Hele-Shaw flow, which also goes under the name of Laplacian-growth, the reader is referred to the book of Gustafsson-Teodorescu-Vasil`ev \cite{Gustafssonbook} which also serves as a guide to the vast literature.      A difference between what is written here  is that we have been working on a compact Riemann surface endowed with a K\"ahler form, whereas the more classical treatment involves the complex plane, usually with the standard Euclidean structure. However this has little effect, and essentially all the fundamental results from the Hele-Shaw theory carry over without difficulty.     The point of view of the Hele-Shaw flow on Riemann surfaces was taken up by Hedenmalm-Shimorin \cite{Hedenmalm} and Hedenmalm-Olofsson \cite{Hedenmalm2}, who emphasise particularly the case of  simply connected Riemann surfaces, and it is from these papers that several of the basic properties above are taken.    The case of the flow on non-simply connected compact Riemann surfaces has been studied more recently by Skinner \cite{Skinner}.     

One can ask for more information about the structure of the boundary $\partial \Omega_t$.  At least when $X=\mathbb C$ and the background metric is real analytic, it is know this boundary consists of a finite number of real simple analytic curves having a finite number of double and cusp points (see \cite{CaffarelliR, Hedenmalm} as well as the work of Sakai \cite{Sakai4,Sakai2,Sakai3,Sakai,Sakai5}).
 
  Constructions similar to the Hele-Shaw envelope are abundant in (pluri)potential theory (as we have seen they show up in the Perron-Bremermann envelope and pluricomplex Green function) and sometimes go under the name of ``extremal envelopes" (see, for example \cite{ComanGuedj, Guedjcapacities, Larusson1,Larusson4, Larusson2, Larusson3,Rashkovskii1,Rashkovskii2, RashkovskiiSigurdsson1,RashkovskiiSigurdsson2}).

Lemma \ref{lem:alternativeenvelope} casts the Hele-Shaw envelope $\psi_t$ in the framework of variational inequalities and obstacle problems which is a subject in its own right (see, for instance \cite{CaffarelliK, Friedman, Kinderlehrer,Petrosyan}).   Perhaps the most important property of $\psi_t$ we have discussed is its $\mathcal{C}^{1,1}$ regularity (sometimes called ``optimal regularity"), from which we deduced both $\partial \Omega_t$ has measure zero and a formula for $\omega_{\psi_t}$ (in fact for this second statement, at least, one can get away with slightly less regularity).  Both of these results originate with the work of Caffarelli-Kinderlehrer \cite{CaffarelliK} and Caffarelli-Rivi\`ere \cite{CaffarelliR}  who restrict attention, for the most part, to domains in $\mathbb R^n$ (although given Lemma \ref{lem:genuineobstacle} it may well be possible that their techniques can be used to prove Theorem \ref{lemma:basicHS}).   Regularity of related envelopes, especially in higher dimensions, has been taken up in many places, for instance \cite{Berman2,Berman, BermanDemailly, ChuZhou, DarvasRubenstein, RossNystromEnvelopes,Tossatti}.

The radially symmetric case from Example \ref{ex:radiallysymmetric} can be generalised to toric manifolds, which was considered by Shiffman-Zelditch \cite{Shiffman} and Pokorny-Singer \cite{SingerPokorny}.  There appear to be many different names for the domain $\Omega_t$ and its complement.   In \cite{Shiffman} the analog of $\Omega_t$ is called the ``forbidden region".   The complement $X\setminus \Omega_t$ is called the ``equilbrium set" by Berman \cite{Berman} and in the theory of variational inequalities and obstacle problems $\partial \Omega_t$ sometimes goes under the name of ``free boundary" and $X\setminus \Omega_t$ goes under the name of ``coincidence set" (e.g.\ \cite[Definition 6.8]{Kinderlehrer}).

\section{The Duality Theorem}
We are now ready to connect the weak Hele-Shaw flow to the HMAE.  We continue with $X$ being a compact connected Riemann-surface with distinguished point $z_0$ and background K\"ahler form $\omega$ normalised so $\int_X\omega =1$. 

\subsection{Another HMAE} 

Let $\pi_X\colon X\times \overline{\mathbb D} \to X$ and $\pi_{\mathbb D}\colon X\times \overline{\mathbb D} \to \overline{\mathbb D}$ be the projections.  

\begin{definition}Set
\begin{equation}\label{eq:weaksolutionDtimes}
\tilde{\Phi} :=\sup \left\{ 
\begin{array}{c}
\Psi\in \Psh(X\times \overline{\mathbb D}, \pi_X^*\omega) \text{ : } \limsup_{\zeta\to \zeta'} \Psi(\zeta)\le 0\text{ for } \zeta'\in X\times \partial \mathbb D \\\quad\quad \text{ and } \nu_{(z_0,0)}(\Psi)\ge 1
\end{array}
\right\}.
\end{equation}\end{definition}

In the above, if $\tau$ is the standard coordinate on $\overline{\mathbb D}$ and $z$ a holomorphic coordinate on $X$ defined near $z_0$ the Lelong number condition $\nu_{(z_0,0)}(\Psi)\ge 1$ means that for all $c<1$,
$$ \Psi(z,\tau) \le c \ln ( |z-z_0|^2+|\tau|^2) + O(1) \text{ for } (z,\tau) \text{ near } (z_0,0).$$
Clearly $\tilde{\Phi}$ is analogous to the pluricomplex Green function discussed in \S \ref{sect:pluricomplex}.  The reason for us introducing this function is it is the weak solution for the following HMAE:

\begin{proposition}\label{prop:weakhmaetildephi}
The function $\tilde{\Phi}$ lies in $\Psh(X\times \overline{\mathbb D}, \pi_X^*\omega)$, is locally bounded away from $(z_0,0)$ and solves
\begin{align}
\MA_{\pi_X^*\omega}(\tilde{\Phi}) &=0 \text{ on } X\times \mathbb D \setminus \{(z_0,0)\} \label{eq:hmaelelong1b}\\
\nu_{(z_0,0)}(\tilde{\Phi}) &\ge 1 \label{eq:hmaelelong2}  \\
\lim_{|\tau|\to 1} \tilde{\Phi}(z,\tau) &= 0.\label{eq:continuityboundary}
\end{align}
Furthermore
\begin{equation}\tilde{\Phi}(z,\tau) = \tilde{\Phi}(z,e^{i\theta} \tau) \text{ for all } (z,\tau)\in X\times\overline{\mathbb D} \text{ and } \theta\in \mathbb R.\label{eq:hmaelelong4}
\end{equation}
\end{proposition}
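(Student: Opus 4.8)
The plan is to treat $\tilde\Phi$ exactly as a pluricomplex Green function with pole at $(z_0,0)$, extracting five of the six conclusions from two elementary facts about the defining class and reserving one genuine comparison argument for the Lelong bound. Throughout I read $\tilde\Phi$ as the upper semicontinuous regularisation of the supremum in \eqref{eq:weaksolutionDtimes}. The first fact is a slice maximum principle: since $\pi_X^*\omega$ restricts to $0$ on each fibre $\{z\}\times\mathbb D$, any competitor $\Psi$ is subharmonic in $\tau$ there, so the constraint $\limsup\le 0$ forces $\Psi(z,\cdot)\le 0$; hence every competitor, and so $\tilde\Phi$, is $\le 0$. The second is an explicit barrier from below: with $\alpha$ as in Lemma \ref{lem:existenceofalphat}, set $\Psi_*:=\max\{\pi_X^*\alpha,\ \ln|\tau|^2\}$. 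Both entries are $\pi_X^*\omega$-subharmonic (for $\ln|\tau|^2$ because $\pi_X^*\omega+dd^c\ln|\tau|^2\ge 0$, the correction being a positive current supported on $X\times\{0\}$), both are $\le 0$ on $X\times\overline{\mathbb D}$, and near the pole $\Psi_*=\ln\max\{|z-z_0|^2,|\tau|^2\}+O(1)=\ln(|z-z_0|^2+|\tau|^2)+O(1)$, so $\nu_{(z_0,0)}(\Psi_*)=1$. Thus $\Psi_*$ is a competitor, the class is nonempty, and $\Psi_*\le\tilde\Phi\le 0$. Being a regularised supremum of a class uniformly bounded above by $0$, $\tilde\Phi$ is $\pi_X^*\omega$-subharmonic by the standard envelope theorem \cite[Thm.\ 2.6.1(iv)]{Klimek}; since $\Psi_*$ is continuous and finite on $X\times\overline{\mathbb D}\setminus\{(z_0,0)\}$, the squeeze $\Psi_*\le\tilde\Phi\le 0$ gives local boundedness away from the pole; and $\Psi_*\ge\ln|\tau|^2$ yields $\ln|\tau|^2\le\tilde\Phi\le 0$, which squeezes to $0$ as $|\tau|\to 1$ and proves \eqref{eq:continuityboundary}.

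The symmetry \eqref{eq:hmaelelong4} is then immediate from invariance of the defining class. The rotation $R_\theta(z,\tau):=(z,e^{i\theta}\tau)$ is a biholomorphism of $X\times\overline{\mathbb D}$ commuting with $\pi_X$, so it fixes $\pi_X^*\omega$, preserves $X\times\partial\mathbb D$, and fixes $(z_0,0)$; as Lelong numbers are biholomorphism invariant, $\Psi\mapsto\Psi\circ R_\theta$ maps the competing class to itself. Hence its regularised supremum is $R_\theta$-invariant, i.e.\ $\tilde\Phi(z,\tau)=\tilde\Phi(z,e^{i\theta}\tau)$.

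The maximality \eqref{eq:hmaelelong1b} is the usual balayage argument, now available thanks to the local boundedness just established. Given $p\in X\times\mathbb D\setminus\{(z_0,0)\}$, pick a small coordinate ball $B'\ni p$ with $\overline{B'}$ avoiding the pole and the boundary; there $\pi_X^*\omega=dd^c h$ with $h$ smooth and $\tilde\Phi$ bounded. Solving the homogeneous Dirichlet problem on $B'$ (Theorem \ref{thm:locdir}, $B'$ being strictly pseudoconvex) with the boundary values of $\tilde\Phi$ produces a $\pi_X^*\omega$-subharmonic $w\ge\tilde\Phi$ on $B'$, equal to $\tilde\Phi$ on $\partial B'$, with $\MA_{\pi_X^*\omega}(w)=0$. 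Regluing $w$ into $\tilde\Phi$ leaves a competitor (the pole, the boundary constraint and the bound $\le 0$ are untouched), so maximality of the envelope forces $w=\tilde\Phi$ and hence $\MA_{\pi_X^*\omega}(\tilde\Phi)=0$ on $B'$; this is precisely how the Perron--Bremermann envelope is shown to solve the HMAE \cite[Ch.\ 1]{Guedjbook}.

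The main obstacle is the Lelong lower bound \eqref{eq:hmaelelong2}, because each competitor only satisfies $\Psi\le\ln(|z-z_0|^2+|\tau|^2)+O(1)$ with a competitor-dependent $O(1)$, and a supremum can be strictly less singular than its members unless this is made uniform. In one variable this is handled (as in the proof of Proposition \ref{prop:HSlelong}) by subtracting the \emph{harmonic} function $t\ln|z-z_0|^2$ and invoking the maximum principle; here that fails, since $\ln(|z-z_0|^2+|\tau|^2)$ is only plurisubharmonic, not pluriharmonic. My plan is to replace the harmonic barrier by the pluricomplex Green function of a ball. On a small coordinate ball $B=\{|z-z_0|^2+|\tau|^2<R^2\}$ with $\pi_X^*\omega=dd^c h$ and $M:=\sup_B h$, every competitor gives a plurisubharmonic $\Psi+h-M$ with $\limsup\le 0$ on $\partial B$ and $\nu_{(z_0,0)}\ge 1$; by the extremal characterisation of the Green function (\S\ref{sect:pluricomplex}, which for the ball is the explicit $\ln(|z-z_0|^2+|\tau|^2)-\ln R^2$) this forces $\Psi+h-M\le\ln(|z-z_0|^2+|\tau|^2)-\ln R^2$. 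Since $h$ is bounded, $\Psi\le\ln(|z-z_0|^2+|\tau|^2)+C$ with $C$ independent of the competitor, and passing to the regularised supremum gives $\nu_{(z_0,0)}(\tilde\Phi)\ge 1$ (the reverse $\le 1$ is already visible from $\tilde\Phi\ge\Psi_*$). This comparison is where the real work sits; everything else is bookkeeping about the defining class.
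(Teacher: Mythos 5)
Your proposal is correct, and for most of the statement it runs along the same lines as the paper's own (sketch) proof: the bound $\tilde{\Phi}\le 0$ comes from the maximum principle on the slices $\{z\}\times\overline{\mathbb D}$, the lower barrier is built from the function $\alpha$ of Lemma \ref{lem:existenceofalphat} together with $\ln|\tau|^2$ (the paper feeds these two candidates into the envelope separately, where you take their maximum $\Psi_*$ --- equivalent bookkeeping), the invariance \eqref{eq:hmaelelong4} follows from invariance of the competing class, and \eqref{eq:hmaelelong1b} is the Perron--Bremermann balayage. The genuine divergence is in the key step, the uniform Lelong bound \eqref{eq:hmaelelong2}, where a supremum could a priori be less singular than its members. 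The paper handles this by passing to the blowup $p\colon Y\to X\times\overline{\mathbb D}$ at $(z_0,0)$ and running a maximum-principle argument (modelled on Proposition \ref{prop:HSlelong}(3c)) on neighbourhoods of the exceptional divisor to make the $O(1)$ term independent of the competitor. You instead compare each competitor, after adding a bounded local potential $h$ of $\pi_X^*\omega$, with the explicit pluricomplex Green function $\ln(|z-z_0|^2+|\tau|^2)-\ln R^2$ of a small coordinate ball, whose extremal characterisation (\S \ref{sect:pluricomplex}) gives the uniform constant at once, and indeed with the exponent $c=1$ rather than for each $c<1$. Your route avoids the blowup entirely and is arguably more classical, at the cost of invoking the standard identification of the ball's Green function (see \cite{Klimek}); the paper's blowup argument is self-contained relative to its own one-dimensional Proposition \ref{prop:HSlelong}(3c).

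One ordering point to fix in your write-up: in the balayage step you ``reglue $w$ into $\tilde{\Phi}$'', and for the glued function to be a competitor its Lelong number at $(z_0,0)$ --- which equals that of $\tilde{\Phi}$, since the modification happens away from the pole --- must be at least $1$; that is exactly \eqref{eq:hmaelelong2}, which you prove only afterwards. Since your Green-function argument for \eqref{eq:hmaelelong2} does not use maximality, this is repaired by simply proving \eqref{eq:hmaelelong2} before \eqref{eq:hmaelelong1b}, or alternatively by performing the Poisson modification on an arbitrary competitor $\Psi$ rather than on $\tilde{\Phi}$ itself (noting that the modified function stays $\le 0$ by the Bedford--Taylor comparison principle, since $(dd^ch)^2=\pi_X^*(\omega^2)=0$ on the coordinate ball). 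This is a presentational issue, not a gap in substance.
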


\begin{proof}
We give a sketch proof.  Observe first both $\ln |\tau|^2$ and the function $\alpha(z)$ from Lemma \ref{lem:existenceofalphat} are candidates for the envelope defining $\tilde{\Phi}$, which implies it is locally bounded away from $(z_0,0)$ and $\ln |\tau|^2\le \tilde{\Phi}(z,\tau)$.  On the other hand the maximum principle applied to the slices $\{z\}\times \overline{\mathbb D}$ shows that $\tilde{\Phi}\le 0$ over $X\times \overline{\mathbb D}$, giving \eqref{eq:continuityboundary} 

For \eqref{eq:hmaelelong2} it is convenient to consider the blowup $p: Y\to X\times \overline{\mathbb D}$ at the point $(z_0,0)$ which has an exceptional divisor we denote by $E$.  Suppose $\pi_X^*\omega + dd^c\Psi \ge 0$ satisfies $\Psi \le c( \ln |z-z_0|^2 + |\tau|^2) + O(1)$ near $(z_0,0)$.  Then $E$ is covered by open subsets $U$ on which $E$ is the zero set of some holomorphic function $u$ say, so that $p^* \Psi|_U \le c \ln |u|^2 + O(1)$.  Then similar to the proof of Proposition \ref{prop:HSlelong}(3c), one can use the maximum principle to deduce in fact $p^*\Psi|_U \le c\ln |u|^2 + O(1)$ (and thus  $\Psi\le c\ln (|z-z_0|^2 + |\tau|^2) +O(1)$) for an $O(1)$ term that is independent of $\Psi$.  We leave the details to the reader.

The fact that $\tilde{\Phi}$ solves the claimed HMAE is as in the Perron-Bremermann envelope.    Finally \eqref{eq:hmaelelong4} is a consequence of the previous statements, since if $\theta$ is fixed then $\tilde{\Phi}(z,e^{i\theta} \tau)$ is a candidate for the envelope defining $\tilde{\Phi}$.
\end{proof}

It is convenient to extend the domain of definition of $\tilde{\Phi}$.  By \eqref{eq:hmaelelong4} for fixed $z\in X$, the function $\tilde{\Phi}(z,e^{-s/2})$ is independent of the imaginary part of $s$, and is subharmonic.  Thus we can think of  $\tilde{\Phi}(z,e^{-s/2})$ as a convex function of $s\in [0,\infty)$.   If we set
$$\tilde{\Phi}(z,e^{-s/2}) = +\infty \text{ for } s<0$$
then $\tilde{\Phi}(z,e^{-s/2})$ is a convex function for all $s\in \mathbb R$.

\begin{theorem}[Duality Theorem,  Ross-Witt Nystr\"om \cite{RWHarmonicDiscs}]
The weak solution $\tilde{\Phi}(z,\tau)$ to the HMAE and the Hele-Shaw envelopes $\psi_t(z)$ are related by a Legendre transform. That is, 
\begin{equation} \label{legendre1}
\psi_t(z)=\inf_{|\tau|>0}\{\tilde{\Phi}(z,\tau)-(1-t)\ln|\tau|^2\}
\end{equation} 
and 
\begin{equation} \label{legendre2}
\tilde{\Phi}(z,\tau)=\sup_{t}\{\psi_{t}(z)+(1-t)\ln |\tau|^2\}.
\end{equation}
\end{theorem}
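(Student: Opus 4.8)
The plan is to prove the two formulas \eqref{legendre1} and \eqref{legendre2} by recognizing them as mutually inverse Legendre transforms, so it suffices to establish one of them and then appeal to the involutive property of the Legendre transform applied to the convex function $s\mapsto \tilde{\Phi}(z,e^{-s/2})$. I would focus on proving \eqref{legendre1}, namely that for each fixed $z$,
$$\psi_t(z)=\inf_{s}\{\tilde{\Phi}(z,e^{-s/2})-(1-t)s\},$$
where I have substituted $s=-\ln|\tau|^2$ so that $\ln|\tau|^2=-s$ and the infimum over $|\tau|>0$ becomes an infimum over $s\in\mathbb{R}$. Since Lemma \ref{lem:convexity} tells us $t\mapsto\psi_t(z)$ is concave, continuous and decreasing, and the extended function $s\mapsto\tilde{\Phi}(z,e^{-s/2})$ is convex by \eqref{eq:hmaelelong4} together with subharmonicity, both sides of the duality are genuine convex/concave objects and the Legendre transform loses no information; this is exactly the role flagged for Lemma \ref{lem:convexity} in the text.

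\textbf{The two inequalities.}
To prove \eqref{legendre1} I would argue both directions using the envelope definitions. For the inequality $\psi_t(z)\le\tilde{\Phi}(z,\tau)-(1-t)\ln|\tau|^2$, I would fix $\tau$ and show that the right-hand side, viewed as a function of $z$, is a competitor in the envelope defining $\psi_t$: one checks it is $\omega$-subharmonic in $z$ (since $\tilde{\Phi}(\cdot,\tau)\in\Sh(X,\omega)$ by Proposition \ref{prop:weakhmaetildephi}), that it is $\le 0$ (using $\tilde{\Phi}\le 0$ and $(1-t)\ln|\tau|^2\le 0$ for $t\le 1$), and that its Lelong number at $z_0$ is at least $t$ (extracting the $t\ln|z-z_0|^2$ singularity from the combined Lelong condition \eqref{eq:hmaelelong2} on $\tilde\Phi$). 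For the reverse inequality $\tilde{\Phi}(z,\tau)\ge\sup_t\{\psi_t(z)+(1-t)\ln|\tau|^2\}$, I would fix $t$ and show $\psi_t(z)+(1-t)\ln|\tau|^2$, now as a function of $(z,\tau)$, is a competitor in the envelope \eqref{eq:weaksolutionDtimes} defining $\tilde{\Phi}$: it is $\pi_X^*\omega$-psh (a sum of an $\omega$-psh function in $z$ and a harmonic-in-$\tau$ pluriharmonic term), it has the right boundary behaviour as $|\tau|\to 1$, and its Lelong number at $(z_0,0)$ is at least $1$ because the Lelong contributions $t$ from $\psi_t$ in the $z$-direction and $(1-t)$ from $\ln|\tau|^2$ in the $\tau$-direction add up to $1$.

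\textbf{The main obstacle.}
The delicate point is the Lelong-number bookkeeping at the singular point $(z_0,0)$ in the reverse inequality: I must verify that the sum $t\ln|z-z_0|^2+(1-t)\ln|\tau|^2$ genuinely produces a Lelong number $\ge 1$ for the \emph{isotropic} singularity $\ln(|z-z_0|^2+|\tau|^2)$ used in \eqref{eq:hmaelelong2}, which is not merely additive and requires comparing the two weights carefully near the origin. The clean way to handle this is to pass to the extended convex picture: writing everything in the variable $s=-\ln|\tau|^2$, the reverse inequality becomes the statement that $\tilde{\Phi}(z,e^{-s/2})\ge\psi_t(z)-(1-t)s$ for all $t$, which is precisely the supporting-line characterization of the convex function $s\mapsto\tilde{\Phi}(z,e^{-s/2})$ having slopes $-(1-t)$ and intercepts $\psi_t(z)$. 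I would then close the argument by noting that, since the Hele-Shaw envelopes $\psi_t$ are exactly the Legendre dual data (slopes indexed by $t$, values $\psi_t(z)$), the two transforms \eqref{legendre1} and \eqref{legendre2} are formally inverse, and the convexity/concavity established above guarantees that applying the transform twice recovers the original function, so proving one direction suffices.
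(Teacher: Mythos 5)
Your proposal has a genuine gap: the two inequalities you set out to prove are in fact one and the same inequality, and the hard half of the theorem is never addressed. Indeed, ``$\psi_t(z)\le\tilde{\Phi}(z,\tau)-(1-t)\ln|\tau|^2$ for all $\tau$'' and ``$\tilde{\Phi}(z,\tau)\ge\psi_t(z)+(1-t)\ln|\tau|^2$ for all $t$'' are both the single statement that $\psi_t(z)+(1-t)\ln|\tau|^2$ minorizes $\tilde{\Phi}$. Your second argument (the competitor in the envelope defining $\tilde{\Phi}$) proves this correctly, exactly as the paper does; your first argument is logically backwards --- exhibiting $z\mapsto\tilde{\Phi}(z,\tau)-(1-t)\ln|\tau|^2$ as a competitor in the envelope defining $\psi_t$ would show it is $\le\psi_t$, the \emph{opposite} of the inequality you want, and in any case it is not a competitor, since $\sup_X\tilde{\Phi}(\cdot,\tau)=0$ while $-(1-t)\ln|\tau|^2>0$ for $|\tau|<1$, so the function is positive somewhere. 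Thus all you have established is the ``$\le$'' half of \eqref{legendre1}, equivalently the ``$\ge$'' half of \eqref{legendre2}. The missing half, $\psi_t(z)\ge\inf_{|\tau|>0}\{\tilde{\Phi}(z,\tau)-(1-t)\ln|\tau|^2\}$, cannot be conjured by the Fenchel--Moreau involution: the involution converts the \emph{full equality} \eqref{legendre1} into \eqref{legendre2}, but from a one-sided inequality it returns only the same one-sided inequality. Your closing ``supporting-line'' remark begs the question: knowing that the lines $s\mapsto\psi_t(z)-(1-t)s$ lie below the convex function $s\mapsto\tilde{\Phi}(z,e^{-s/2})$ does not show that, slope by slope, $\psi_t(z)$ is the \emph{largest} admissible intercept --- and that optimality is precisely the unproved half of the theorem.

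For comparison, the paper proves the missing inequality by showing that $\tilde{\psi}_t(z):=\inf_{|\tau|>0}\{\tilde{\Phi}(z,\tau)-(1-t)\ln|\tau|^2\}$ is itself a competitor in the envelope defining $\psi_t$, and this needs two inputs absent from your proposal. First, a partial infimum of $\omega$-subharmonic functions is not subharmonic in general; here $\tilde{\psi}_t\in\Sh(X,\omega)$ follows from Kiselman's minimum principle, which applies precisely because of the $S^1$-invariance \eqref{eq:hmaelelong4} of $\tilde{\Phi}$ in $\tau$. Second --- and this is where the real Lelong-number bookkeeping lives, not in the direction you flagged, where the elementary estimate $t\ln|z-z_0|^2+(1-t)\ln|\tau|^2\le\ln(|z-z_0|^2+|\tau|^2)$ for $t\in[0,1]$ suffices --- one must prove $\nu_{z_0}(\tilde{\psi}_t)\ge t$: starting from the isotropic bound $\tilde{\Phi}(z,\tau)\le c\ln(|z-z_0|^2+|\tau|^2)+C$ for $c<1$, one minimizes $\ln(e^{-s}+|z-z_0|^2)+(1-t)s$ over $s\ge 0$ (for $t\in(0,1)$ the minimum occurs at $e^{-s}=\tfrac{1-t}{t}|z-z_0|^2$), obtaining $\tilde{\psi}_t(z)\le ct\ln|z-z_0|^2+O(1)$ and then letting $c\to 1$; the cases $t\in\{0,1\}$ and $t\notin[0,1]$ are handled separately. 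Together with $\tilde{\psi}_t\le 0$ (from the boundary values \eqref{eq:continuityboundary}) this yields $\tilde{\psi}_t\le\psi_t$, completing \eqref{legendre1}; only at that point does your (otherwise sound) appeal to convexity, lower semicontinuity and the involution legitimately produce \eqref{legendre2}.
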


\begin{proof}
For $t\in [0,1]$ consider 
$$\alpha_t(z,\tau):=\psi_{t}(z)+(1-t)\ln |\tau|^2 \text{ for } (z,\tau)\in X\times \overline{\mathbb D}.$$ 
Clearly $\alpha_t\le 0$ and $\pi_X^* \omega + dd^c \alpha_t = \pi_X^* \omega_{\psi_t}\ge 0$.  Also as $\nu_{z_0}(\psi_t)\ge t$, 
$$\alpha_t (z,\tau)\le t\ln |z-z_0|^2 + (1-t) \ln |\tau|^2 + O(1) \le \ln ( |z-z_0|^2 + |\tau|^2) + O(1).$$
Thus $\alpha_t$ is a candidate for the envelope defining $\tilde{\Phi}$ giving
\begin{equation}
 \psi_t(z) \le \tilde{\Phi}(z,\tau) - (1-t)\ln |\tau|^2.\label{eq:duality1}
 \end{equation}
On the other hand if $t>1$ then $\psi_t\equiv -\infty$ and if $t<0$ then $\tilde{\Phi}(z,\tau) - (1-t)\ln |\tau |^2 \ge \tilde{\Phi}(z,\tau) - \ln |\tau|^2 \ge 0 = \psi_t(z)$.  Hence  \eqref{eq:duality1} holds for all $t\in \mathbb R$, and taking the infimum over all $|\tau|>0$,
$$\psi_t(z)\le \inf_{|\tau|>0}\{\tilde{\Phi}(z,\tau)-(1-t)\ln|\tau|^2\}.$$
For the other inequality, since $\tilde{\Phi}(z,\tau)$ is independent of the argument of $\tau$, it follows from Kiselman's minimum principle \cite{Kiselman} that 
$$\tilde{\psi}_{t}(z): =  \inf_{|\tau|>0}\{\tilde{\Phi}(z,\tau)-(1-t)\ln|\tau|^2\}$$
is in $\Sh(X,\omega)$.  We wish to show $\tilde{\psi}_t$ is a candidate for the envelope defining $\psi_t$.  

First, using \eqref{eq:continuityboundary} and letting $\tau\to 1$ gives $\tilde{\psi}_t \le 0$.  We claim $\nu_{z_0}(\tilde{\psi}_t)\ge t$.  To see this, recall we are thinking of $\tilde{\Phi}(z,e^{-s/2})$ as a convex function in $s\in [0,\infty)$.  So for a fixed $z$
\begin{equation}
\tilde{\psi}_t(z)=\inf_{s\geq 0}\{\tilde{\Phi}(z,e^{-s/2})+(1-t)s\}.\label{eq:dualityeq1}
\end{equation}
Now $\tilde{\Phi}$ has Lelong number at least 1 at $(z_0,0)$.  So for any $c<1$ there is a constant $C$ such that
\begin{equation}
\tilde{\Phi}(z,\tau) \le c\ln (|z-z_0|^2 + |\tau|^2) +C =c \ln(|z-z_0|^2 +e^{-s}) +C\label{eq:lelongtildepsi}
\end{equation} 
for $(z,\tau)$ near $(z_0,0)$.  Combining with  \eqref{eq:dualityeq1} yields \begin{equation}
\tilde{\psi}_{t}(z)\leq c\inf_{s\geq 0}\{\ln (e^{-s}+|z-z_0|^2)+(1-t)s\}+C.\label{eq:dualityeq2}
\end{equation}

By elementary means one easily checks if $t\in (0,1)$ the infimum of $\ln (e^{-s}+|z-z_0|^2)+(1-t)s$ is attained when $e^{-s} = \frac{1-t}{t}|z-z_0|^2$ and at this point the right hand side of \eqref{eq:dualityeq2} is equal to
$$c(t\ln |z-z_0|^2 -(1-t)\ln (1-t) -t\ln t)+C.$$
Hence $\tilde{\psi}_t(z)\le ct\ln |z-z_0|^2 + O(1)$ for $z$ near $z_0$.   Since this holds for all $c<1$ we conclude $\nu_{z_0}(\tilde{\psi}_t)\ge t$ for $t\in (0,1)$.   For $t=0$ one notes $\ln |\tau|^2$ is a candidate for the envelope defining $\tilde{\Phi}$, so $ \ln |\tau|^2\le \tilde{\Phi}$, which gives $\tilde{\psi}_0 = \inf_{s\ge 0}\{ \tilde{\Phi}(z,e^{-s/2}) + s\} \ge 0$ and hence in fact $\tilde{\psi}_0 = 0 = \psi_0$.  For $t=1$, observe $\tilde{\psi}_1(z)\leq \tilde{\Phi}(z,0) \le \ln |z-z_0|^2 + O(1)$  so $\nu_{z_0}(\tilde{\psi}_t) \ge 1$.  For $t<0$ then certainly $\nu_{z_0}(\tilde{\psi}_t) \ge t$, and thus we conclude for $t\le 1$ that $\tilde{\psi}_t$ is a candidate for the envelope defining $\psi_t$, and thus $\tilde{\psi}_t=\psi_t$.  Finally for $t>1$ by taking $s\to \infty$ in the definition of $\tilde{\psi}_t$ it is immediate $\tilde{\psi}_t\equiv -\infty$ and so $\tilde{\psi}_t = \psi_t$ for all $t$ giving (\ref{legendre1}).

After some rearranging, we have shown  $$-\psi_t(z)=\sup_{s \in \mathbb{R}}\{ts-(\tilde{\Phi}(z,e^{-s/2})+s)\},$$ i.e. that $-\psi_t(z)$ is the Legendre transform of $u(s):=\tilde{\Phi}(z,e^{-s/2})+s.$   So, the second statement follows from the first by the involution property of the Legendre transform.  In fact, we can see that $u(s)$ is convex and lower semicontinuous (since it is continuous on $[0,\infty)$ and constantly $-\infty$ on $(-\infty,0)$).  Thus 
by the Fenchel-Moreau Theorem (see e.g. \cite{Rockafellar})  $u(s)$ is the Legendre transform of $-\psi_t(z)$ which is \eqref{legendre2}). 
\end{proof}

\begin{remark}
Thinking of $\tilde{\Phi}$ as a solution to the HMAE over the punctured disc, we can interpret it as a weak geodesic ray in the space of K\"ahler potentials $\mathcal K(X,\omega)$.  We have seen that $\tilde{\Phi}(z,\tau)$ depends only on the absolute value of $\tau$, and so using the variable $s=-\log |\tau|$ the potentials $\tilde{\Phi}(z,e^{-s})$ for $s\in [0,\infty)$ give a weak geodesic ray in this space, starting at the potential that is identically zero when $s=0$.  In the limit as $s\to \infty$, this ray ends up with a singular potential on $X$ that puts all of its mass at the distinguished point $z_0$ (and so it is these geodesic rays that are related through a Legendre transform to the Hele-Shaw flow).  In  previous work of Donaldson \cite{DonaldsonNahm}, a different free boundary problem is related, again through a Legendre transform, to the HMAE over the annulus, and thus to weak geodesic segments in $\mathcal K(X,\omega)$.
\end{remark}

\subsection{Connection with the Hele-Shaw domains}

So far we have related the solution $\tilde{\Phi}$ to the HMAE with the Hele-Shaw envelopes, and now we connect it to the weak Hele-Shaw domains. 

\begin{definition}\label{def:hamiltonian}
Let $H\colon X\times \overline{\mathbb{D}}^{\times}\to \mathbb R$ be defined by
\begin{equation}
H(z,\tau):=\frac{\partial}{\partial s^+} \tilde{\Phi}(z,e^{-s/2})\label{eq:defhamiltonian}
\end{equation}
where $s:=-\ln |\tau|^2$.  
\end{definition}
Here the notation means we are taking the right derivative, which by by convexity of $s\mapsto H(z,e^{-s/2})$ always exists.  Our reason for introducing this function is that it records the time at which the weak Hele-Shaw flow arrives at a given point in $X$.

\begin{proposition}\label{prop:Hu_t}
$$H(z,1)+1=\sup\{t: \psi_{t}(z)=0\} = \sup\{ t: z\notin \Omega_t\}.$$
\end{proposition}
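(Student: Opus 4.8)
The plan is to reduce everything to an elementary statement about the convex function $s\mapsto \tilde{\Phi}(z,e^{-s/2})$ and its right derivative at the origin. First, the right-hand equality $\sup\{t:\psi_t(z)=0\}=\sup\{t:z\notin\Omega_t\}$ requires no work: since $\psi_t\le 0$ everywhere and $\Omega_t=\{\psi_t<0\}$ by definition, we have $z\notin\Omega_t$ precisely when $\psi_t(z)\ge 0$, i.e.\ precisely when $\psi_t(z)=0$. So the two index sets $\{t:\psi_t(z)=0\}$ and $\{t:z\notin\Omega_t\}$ literally coincide, and hence so do their suprema. It remains to identify this common value with $H(z,1)+1$.

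Fix $z$ and write $g(s):=\tilde{\Phi}(z,e^{-s/2})$, which we recall is convex on $[0,\infty)$ and continuous up to $s=0$, with $g(0)=\tilde{\Phi}(z,1)=0$ by the boundary behaviour \eqref{eq:continuityboundary}. With the substitution $s=-\ln|\tau|^2$, the Legendre formula \eqref{legendre1}, in the form \eqref{eq:dualityeq1}, reads $\psi_t(z)=\inf_{s\ge 0}\{g(s)+(1-t)s\}$; and by Definition \ref{def:hamiltonian} we have $H(z,1)=g'_+(0)$, since $|\tau|=1$ corresponds to $s=0$.

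The heart of the proof is then to decide when the infimum defining $\psi_t(z)$ equals $0$. Set $h(s):=g(s)+(1-t)s$, a convex function with $h(0)=g(0)=0$; thus $\inf_{s\ge 0}h(s)\le 0$ always, with equality exactly when $h$ is nondecreasing on $[0,\infty)$. Because $h$ is convex with $h(0)=0$, this happens if and only if $h'_+(0)\ge 0$, that is $g'_+(0)+(1-t)\ge 0$, i.e.\ $t\le g'_+(0)+1=H(z,1)+1$. Hence $\{t:\psi_t(z)=0\}=(-\infty,\,H(z,1)+1]$, whose supremum is $H(z,1)+1$, which is what we want.

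I expect no serious obstacle here; the only points needing care are the one-dimensional convexity facts, namely that a convex $h$ with $h(0)=0$ attains its minimum over $[0,\infty)$ at $s=0$ if and only if $h'_+(0)\ge 0$, and that $g'_+(0)=\inf_{s>0}g(s)/s$. As a sanity check worth recording, one sees $g'_+(0)\in[-1,0]$: the bound $g'_+(0)\le 0$ follows from $g\le 0=g(0)$, while $g'_+(0)\ge -1$ follows from $\psi_0\equiv 0$ (Proposition \ref{prop:HSlelong}(1)) together with \eqref{eq:dualityeq1}, which forces $g(s)\ge -s$. This guarantees $H(z,1)+1\in[0,1]$, consistent with the fact that $\psi_t(z)=0$ for all $t\le 0$ while $\psi_t\equiv-\infty$ for $t>1$.
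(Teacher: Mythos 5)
Your proof is correct, and it takes a genuinely different route from the paper's. You start from the Legendre formula \eqref{legendre1}, writing $\psi_t(z)=\inf_{s\ge 0}\{g(s)+(1-t)s\}$ where $g(s)=\tilde{\Phi}(z,e^{-s/2})$, so that the whole proposition collapses to one elementary fact: a convex function $h$ on $[0,\infty)$ with $h(0)=0$ satisfies $\inf_{s\ge 0}h=0$ if and only if $h'_+(0)\ge 0$. This yields the equivalence $\psi_t(z)=0 \Leftrightarrow t\le H(z,1)+1$ in one stroke. The paper argues instead from the dual formula \eqref{legendre2} and proves the two inequalities separately: if $\psi_t(z)=0$ then $\tilde{\Phi}(z,e^{-s/2})\ge (t-1)s$, so $H(z,1)\ge t-1$ by convexity; while if $\psi_t(z)=a<0$, it invokes the concavity and monotonicity of $t'\mapsto\psi_{t'}(z)$ (Lemma \ref{lem:convexity}) to split the supremum in \eqref{legendre2} over ranges of $t'$ and deduce $\tilde{\Phi}(z,e^{-s/2})\le \max((t-1)s,a)$, hence $H(z,1)\le t-1$. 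Your route buys economy and self-containment: both directions at once, no appeal to Lemma \ref{lem:convexity}, and all the work localized in one-variable convex analysis (your sanity check that $H(z,1)\in[-1,0]$ is a pleasant bonus, not needed). What the paper's route buys is the explicit supporting-line picture for the convex function $s\mapsto\tilde{\Phi}(z,e^{-s/2})$ --- the bounds $g\ge (t-1)s$ and $g\le\max((t-1)s,a)$ --- which is precisely the picture reused in the proof of Lemma \ref{lem:ham2}. Both arguments rest on the same implicit convention, forced by \eqref{eq:continuityboundary}, that $g(0)=0$; and your observation that the second equality $\sup\{t:\psi_t(z)=0\}=\sup\{t:z\notin\Omega_t\}$ is purely definitional (since $\Omega_t=\{\psi_t<0\}$ and $\psi_t\le 0$) correctly supplies a point the paper leaves unsaid.
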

\begin{proof}
From (\ref{legendre2}) if $\psi_{t}(z)=0$ then $$\tilde{\Phi}(z,e^{-s/2})\geq (t-1)s$$ 
where as always $s=-\ln |\tau|^2$, and thus by convexity $$H(z,1)\geq t-1.$$   For the other direction, suppose $\psi_{t}(z)=a$ for some $a<0$.  Recalling for a fixed $z$ the function $t'\mapsto \psi_{t'}(z)$ is concave and decreasing in $t'$, one sees that for $t\le t'\le 1$ and $s\ge 0$ we have $\psi_{t'}(z) + (t'-1)s \le a$.  On the other hand $\psi_{t'}\le 0$ so if $0\le t'\le t$ then $\psi_{t'}(z) + (t'-1)s\le (t-1)s$.  Putting this together with (\ref{legendre2}) gives $$\tilde{\Phi}(z,e^{-s/2})\leq  \max( (t-1)s,a)$$ and so $H(z,1)\leq t-1,$ which proves the proposition.
\end{proof}

As an application we are able to give the following statement about the movement of the boundary of the Hele-Shaw flow.   By means of notation, for any $S\subset X$ and $r>0$ let
$$ S + B_r = \{ z\in X :  d(z,z')<r \text{ for some } z'\in S\}$$
where $d$ denotes a fixed distance function on  $X$ (for instance we could take the geodesic
distance with respect to the background K\"ahler metric determined by $\omega$).

\begin{corollary}\label{cor:HSmovement}\ 
\begin{enumerate}
\item Assume that  $H(\cdot, 1)$ is continuous.  Then the boundary of the weak Hele-Shaw flow is strictly increasing.  That is, if $z\in \partial \Omega_t$ for some $t>0$ then $z\in \Omega_{t'}$ for all $t'>t$.
\item Assume that $H(\cdot, 1)$ is moreover Lipschitz.  Then there is a lower bound on the rate of increase of the weak Hele-Shaw flow.  That is, there exist a $\delta>0$ such that for all $0<t<t'<1$ 
$$\Omega_t + B_{\delta(t-t')} \subset \Omega_{t'}.$$
\end{enumerate}
\end{corollary}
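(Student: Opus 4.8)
The plan is to deduce both statements from Proposition \ref{prop:Hu_t}, which tells us that $H(\cdot,1)+1$ records the arrival time of the flow, precisely $H(z,1)+1 = \sup\{t : z\notin \Omega_t\}$. It is worth recording first the monotone structure: since $t\mapsto \psi_t(z)$ is decreasing (Lemma \ref{lem:convexity}), the domains are nested, $\Omega_t\subset\Omega_{t'}$ for $t\le t'$, and so the function
$$ T(z):=\sup\{t : z\notin\Omega_t\} = H(z,1)+1 $$
satisfies $z\in\Omega_t \iff T(z)<t$ (using openness of $\Omega_t$ from Corollary \ref{cor:open} to handle the boundary case) while $z\in\partial\Omega_t$ forces $T(z)=t$. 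The whole corollary is thus a statement about the sublevel sets of the single function $T=H(\cdot,1)+1$.

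For part (1), I would argue as follows. Suppose $z\in\partial\Omega_t$ for some $t>0$, so by the above $T(z)=t$. Fix any $t'>t$; I want $z\in\Omega_{t'}$, equivalently $T(z)<t'$, which is immediate since $T(z)=t<t'$. The only subtlety is the edge case where $T(z)$ might be attained but $z$ could still lie on $\partial\Omega_{t'}$ rather than in the interior; here continuity of $H(\cdot,1)$ is what I would invoke, since it forces $T$ to be continuous, and then the strict inequality $T(z)<t'$ together with $\Omega_{t'}=\{T<t'\}$ being open gives $z\in\Omega_{t'}$ genuinely in the interior. I expect this part to be essentially formal once Proposition \ref{prop:Hu_t} is in hand.

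For part (2), the Lipschitz hypothesis on $H(\cdot,1)$, say with constant $L$, transfers directly to $T$. The claim $\Omega_t + B_{\delta(t'-t)}\subset\Omega_{t'}$ (note the sign: the displayed statement reads $B_{\delta(t-t')}$ but monotonicity forces the radius to be positive, so I read this as $\delta(t'-t)$ with $t<t'$) amounts to: if $T(z)<t$ and $d(z,w)<\delta(t'-t)$ then $T(w)<t'$. The Lipschitz bound gives $T(w)\le T(z)+L\,d(z,w) < t + L\delta(t'-t)$, and choosing $\delta := 1/L$ (or any $\delta\le 1/L$) makes the right-hand side at most $t+(t'-t)=t'$, yielding $T(w)<t'$ as desired. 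Thus $\delta=1/L$ works, and one reads off the uniform lower bound on the rate of advance of the free boundary.

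The main obstacle, and the only genuinely non-trivial input, is not in these deductions themselves but in the continuity and Lipschitz hypotheses on $H(\cdot,1)$: these are taken as assumptions in the statement rather than proved, so the work lies in identifying $T$ with $H(\cdot,1)+1$ and carefully handling the interface between the open set $\{T<t\}=\Omega_t$ and its boundary $\{T=t\}\supset\partial\Omega_t$. I would therefore present the argument by first establishing the clean identification $\Omega_t=\{T<t\}$, being careful that $T(z)=t$ does \emph{not} by itself decide whether $z$ lies in $\Omega_t$ or its complement, and then running the two short monotonicity-plus-regularity arguments above.
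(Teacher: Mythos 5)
Your proposal is correct and follows essentially the same route as the paper: both deduce the corollary from Proposition \ref{prop:Hu_t} by treating $T=H(\cdot,1)+1$ as the arrival-time function, using monotonicity of the domains to get $T\le t$ on $\Omega_t$, continuity to propagate this bound to $\partial\Omega_t$ in part (1), and the Lipschitz bound with $\delta=1/L$ in part (2). One small inaccuracy: the strict identification $\Omega_t=\{T<t\}$ does not follow from openness (Corollary \ref{cor:open}) but from the continuity of $t\mapsto\psi_t(z)$ in Lemma \ref{lem:convexity}; however, your arguments only ever need the non-strict bound $T\le t$ on $\Omega_t$ together with the easy implication $T(z)<t'\Rightarrow z\in\Omega_{t'}$, exactly as in the paper, so this does not affect the proof.
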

\begin{proof}
We start with the first statement. Let $z\in \partial \Omega_t$ and $(z_n)_{n\in \mathbb N}$ be a sequence of points in $\Omega_t$ tending to $z$ as $n$ tends to infinity.  Fixing $n$ we then have $z_n\in \Omega_s$ for all $s\ge t$ and so
$$ H(z_n,1) + 1 \le t.$$
By continuity of $H(\cdot,1)$ this implies
$$ H(z,1) + 1 \le t$$
and so if $t'>t$ we must have $z\in \Omega_{t'}$ as desired.

For the second statement, let $C$ be the Lipschitz constant of $H(\cdot, 1)$, so
$$ |H(z,1) - H(\tilde{z},1)|\le C d(z,\tilde{z}) \text{ for all } z,\tilde{z}\in X,$$
and set $\delta = C^{-1}$.  Fix $t'>t$ and $z\in \Omega_t + B_{\delta (t'-t)}$.  Then there exists $z'\in \Omega_t$ with $d(z,z')<\delta (t'-t)$.    As $z'\in \Omega_t$ we clearly have $H(z')+1\le t$.  On the other hand if $z\notin \Omega_{t'}$ then $H(z)+1\ge t'$ giving
$$ t'-t \le H(z) - H(z') \le Cd(z,z')  < C\delta (t'-t) =  t'-t$$
which is absurd.  Hence we must have $z\in \Omega_{t'}$ as required.
\end{proof}

\begin{remark}
Of course if $\tilde{\Phi}$ lies in $\mathcal{C}^{1,1}(X\times \Sigma)$ then $H$ will be Lipschitz.  We will see in the next section that this always holds when $X=\mathbb P^1$, and expect that $\tilde{\Phi}$ should be at least $\mathcal{C}^{1,\alpha}$ for all $\alpha<1$ when $X$ is a general compact Riemann surface.  

Even in the case when $X=\mathbb P^1$ Corollary \ref{cor:HSmovement} is new (as far as we are aware).  Heden\-malm-Shimorin have a similar statement \cite[Proposition 3.2]{Hedenmalm} but under the hypothesis $\Omega_t$ is simply connected along with some regularity assumptions about $\partial \Omega_t$.    The proof above rests on regularity of $\tilde{\Phi}$, and it would be interesting to compare this with a proof (if one exists) that uses only one-dimensional techniques such as those from \S \ref{sec:HS}.  
\end{remark}

\subsection{Twisting}\label{sec:hmaetwist}

We end this section by discussing a certain ``twisting" technique that applies when $X=\mathbb P^1$ to show the quantity $\tilde{\Phi}$
we have been considering can be expressed in a different way without the condition on the Lelong number.    We have two motivations for wanting to do this.  First, the new formulation solves the classical version of the HMAE as discussed in the introduction, and thus this twisting relates it also to the Hele-Shaw flow.   Second, we can use known regularity results about this version of the HMAE to conclude regularity of $\tilde{\Phi}$.   
 
 The necessity of restricting to $\mathbb P^1$ is that we will make use of the existence of a global holomorphic $S^1$-action.  Consider $\mathbb P^1$ covered by two copies of $\mathbb C$ in the standard way with coordinates $z$ and $w=1/z$.   For non-zero $\tau\in \mathbb \overline{\mathbb D}$ the map
 $$\rho_{\tau}:\mathbb P^1\to \mathbb P^1 \text{ given by } f(z) = \tau z$$
 is a biholomorphism fixing $z_0$.  Restricting to those $\rho_{\tau}$ with $|\tau|=1$ gives a global holomorphic $S^1$-action.
 
 Now $\rho_{\tau}^*\omega$ lies in the same cohomology class as $\omega$ and hence we can write
 $$ \rho^*_\tau \omega = \omega + dd^c\phi_\tau$$
 for some smooth function $\phi_{\tau}$ on $X$.  By choosing these to be normalised by requiring $\int_X \phi_\tau \omega =0$, the $\phi_\tau$ are uniquely defined and 
 \begin{equation}\label{eq:boundarydisc}
 \phi(z,\tau) : = \phi_\tau(z)
 \end{equation}
 is a smooth function on $X\times \overline{\mathbb D}^\times$.

 From now on,  let $\Phi$ be the Perron-Bremermann envelope on $X\times \overline{\mathbb D}$ with boundary data $\phi$.  Thus
  \begin{equation}\label{eq:weaksolutionD}
\Phi :=\sup \left\{ 
\begin{array}{c}
\Psi\in \Psh(X\times \overline{\mathbb D}, \pi_X^*\omega) \text{ : } \limsup_{\zeta\to \zeta'} \Psi(\zeta)\le \phi(\zeta')\text{ for } \zeta'\in X\times \partial \mathbb D 
\end{array}
\right\}.
\end{equation}

 So the difference between $\tilde{\Phi}$ and $\Phi$ is the latter is taken with respect to the ``twisted" boundary data $(z,\tau)\mapsto \phi( z,\tau)$, but does not have any condition on the Lelong number at $(z_0,0)$.  The following simple Lemma gives the explicit relationship between these two envelopes.  It will be crucial later on when we wish to translate results about envelopes over the punctured disc (which connects most naturally with the Hele-Shaw flow on $X$) to analogous statements about envelopes over the unpunctured disc.

\begin{lemma} \label{lem:twist}
We have
$$\Phi(z,\tau)=\tilde{\Phi}(\tau z,\tau)+\phi( z,\tau) -\ln |\tau|^2 \text{ for } (z,\tau)\in \mathbb P^1\times \overline{\mathbb D}^{\times}.$$
\end{lemma}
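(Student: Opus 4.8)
The plan is to realise the claimed formula as coming from a single biholomorphism of the total space that intertwines the two extremal problems defining $\tilde\Phi$ and $\Phi$. Writing $z_0$ for the origin, consider
$$F\colon \mathbb P^1\times\overline{\mathbb D}^\times\to \mathbb P^1\times\overline{\mathbb D}^\times,\qquad F(z,\tau)=(\tau z,\tau)=(\rho_\tau(z),\tau),$$
a biholomorphism with inverse $F^{-1}(w,\tau)=(w/\tau,\tau)$, and introduce the substitution operator $Tu:=u\circ F+\phi-\ln|\tau|^2$, so that the assertion of the Lemma is precisely $\Phi=T\tilde\Phi$. The idea is to show that $T$ is an order-preserving bijection carrying competitors for the envelope defining $\tilde\Phi$ onto competitors for the envelope defining $\Phi$; since an order-preserving bijection of the two families carries supremum to supremum, this gives $\Phi=T\tilde\Phi$. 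In practice I would only verify the two implications that $T\tilde\Phi$ is a competitor for $\Phi$ (hence $T\tilde\Phi\le\Phi$) and that $T^{-1}\Phi$ is a competitor for $\tilde\Phi$ (hence $T^{-1}\Phi\le\tilde\Phi$, and applying the order-preserving $T$ gives $\Phi\le T\tilde\Phi$), which suffices.

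The heart of the matter is the form identity
$$F^*(\pi_X^*\omega)=\pi_X^*\omega+dd^c\phi \quad\text{on } \mathbb P^1\times\overline{\mathbb D}^\times.$$
On each slice $\mathbb P^1\times\{\tau\}$ this is exactly the defining relation $\rho_\tau^*\omega=\omega+dd^c\phi_\tau$; the real content is that the mixed and vertical ($d\tau\wedge d\bar\tau$) components agree as well, which one checks directly from the explicit potentials. Granting this, and using that $dd^c\ln|\tau|^2=0$ away from $\tau=0$, a one-line computation gives
$$\pi_X^*\omega+dd^c(Tu)=\pi_X^*\omega+F^*(dd^c u)+dd^c\phi=F^*\bigl(\pi_X^*\omega+dd^c u\bigr),$$
so $Tu$ is $\pi_X^*\omega$-plurisubharmonic if and only if $u$ is, and $T$ preserves the pointwise order; the same computation run backwards handles $T^{-1}$.

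It remains to match the side conditions. On the boundary $|\tau|=1$ the map $F$ preserves $\mathbb P^1\times\partial\mathbb D$ and $\ln|\tau|^2=0$ there, so the condition $\limsup\Psi\le 0$ for the $\tilde\Phi$-problem is converted, using $\tilde\Phi\to 0$ on the boundary, into $\limsup(T\Psi)\le\phi$ for the $\Phi$-problem. At the distinguished point the term $-\ln|\tau|^2$ does the bookkeeping: if $\nu_{(z_0,0)}(u)\ge 1$ then near $(z_0,0)$ one has $u(\tau z,\tau)\le \ln(|\tau z|^2+|\tau|^2)+O(1)=\ln|\tau|^2+O(1)$, so $Tu=u\circ F-\ln|\tau|^2+\phi$ stays bounded and extends across $(z_0,0)$ as a competitor for $\Phi$; conversely, since $\Phi$ is bounded on the compact $\mathbb P^1\times\overline{\mathbb D}$, the $+\ln|\tau|^2$ appearing in $T^{-1}\Phi$ forces $T^{-1}\Phi\le \ln(|w|^2+|\tau|^2)+O(1)$, i.e. $\nu_{(z_0,0)}(T^{-1}\Phi)\ge 1$. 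This yields the two required inequalities and hence the identity.

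I expect the two genuine obstacles to be, first, upgrading the slice-wise relation $\rho_\tau^*\omega=\omega+dd^c\phi_\tau$ to the full form identity on the product: the vertical $d\tau\wedge d\bar\tau$ term is exactly what must be pinned down, and it is sensitive to how the ambiguity of $\phi_\tau$ by an additive function of $\tau$ is fixed, so some care with the normalisation is needed to keep the identity clean (and with it the positivity preserved by $T$). Second, the singularity bookkeeping near $(z_0,0)$, where one must verify uniformly as $\tau\to 0$ that the $-\ln|\tau|^2$ shift converts the Lelong condition into mere local boundedness and back. Both are of the same flavour as arguments already used for $\tilde\Phi$ (compare the Lelong estimate in the proof of Proposition \ref{prop:weakhmaetildephi}), so no essentially new technique should be required.
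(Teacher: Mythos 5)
Your proposal is, in outline, exactly the paper's proof: the paper sets $\beta(z,\tau):=\Phi(\tau^{-1}z,\tau)-\phi(\tau^{-1}z,\tau)+\ln|\tau|^2$, which is your $T^{-1}\Phi$, checks that it is a competitor for the envelope defining $\tilde{\Phi}$ (positivity, boundary values, Lelong number), concludes $\beta\le\tilde{\Phi}$, and obtains the reverse inequality ``similarly'', i.e.\ by checking that your $T\tilde{\Phi}$ is a competitor for $\Phi$. So the two-sided transport of competitors under $(z,\tau)\mapsto(\tau z,\tau)$ is the same argument, and your Lelong and boundary bookkeeping matches what the paper leaves to the reader.

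However, the first ``obstacle'' you flag is not a point where mere care suffices: with the normalisation actually used in this survey, $\int_X\phi_\tau\,\omega=0$, the identity $F^*\pi_X^*\omega=\pi_X^*\omega+dd^c\phi$ is \emph{false}, and so is the Lemma as literally stated. Take $\omega$ to be the Fubini--Study form. Then $\rho_\tau^*\omega-\omega=dd^c g_\tau$ (the $dd^c$ taken on $\mathbb P^1$) with $g_\tau(z)=\ln\frac{1+|\tau z|^2}{1+|z|^2}$, and the product identity holds precisely for the choice $\phi(z,\tau)=g_\tau(z)$; but the normalised potential is $\phi_\tau=g_\tau+c(\tau)$ where
$$c(\tau)=1+\frac{|\tau|^2\ln|\tau|^2}{1-|\tau|^2},$$
a bounded, non-constant, radial function of $\tau$, hence not harmonic on $\overline{\mathbb D}^{\times}$, so $dd^c\phi$ differs from $F^*\pi_X^*\omega-\pi_X^*\omega$ by $dd^cc\neq 0$. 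One can even see the failure of the Lemma directly in this example: rotations preserve $\omega_{FS}$, so the boundary data vanishes and $\Phi\equiv 0$ by the maximum principle on the slices $\{z\}\times\overline{\mathbb D}$, while $\tilde{\Phi}(z,\tau)=\ln\frac{|z|^2+|\tau|^2}{1+|z|^2}$ (verify it is a competitor solving the HMAE with the right Lelong number, or compute it from the Duality Theorem); the right-hand side of the Lemma then equals $c(\tau)\not\equiv 0$. The repair is to drop the normalisation $\int_X\phi_\tau\omega=0$ and instead \emph{define} $\phi$ by the product identity: writing $\omega=\omega_{FS}+dd^cu$ with $u$ smooth on $\mathbb P^1$, the function $\phi(z,\tau):=\ln\frac{1+|\tau z|^2}{1+|z|^2}+u(\tau z)-u(z)$ is smooth on $\mathbb P^1\times\overline{\mathbb D}^{\times}$, restricts on each slice to a valid potential for $\rho_\tau^*\omega-\omega$, and satisfies $F^*\pi_X^*\omega=\pi_X^*\omega+dd^c\phi$ exactly; with this choice your argument (and the paper's one-line proof, which needs the same identity to check $\pi_X^*\omega+dd^c\beta\ge 0$) goes through. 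Your singularity bookkeeping is fine, granted the standard fact, recorded in the paper, that the supremum defining a Lelong number is attained, so that $\nu_{(z_0,0)}(\Psi)\ge 1$ really yields $\Psi\le\ln(|z-z_0|^2+|\tau|^2)+O(1)$ with $c=1$. In short: same approach as the paper, and the normalisation sensitivity you suspected is a genuine defect of the setup as written, not of your argument.
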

\begin{proof}
Let $\beta(z,\tau):=\Phi(\tau ^{-1} z,\tau)-\phi(\tau^{-1} z,\tau) +\ln |\tau|^2$.  One easily checks if $|\tau|=1$ then $\beta(z,\tau)=0$ and $\pi_{\mathbb P^1}^* \omega + dd^c\beta \ge 0$ and also $\nu_{(z_0,0)}(\beta)\ge 1$.  Hence $\beta(z,\tau) \le \tilde{\Phi}(z,\tau)$ giving one inequality, and the other is proved similarly.
\end{proof}

\begin{theorem}\label{thm:c11}
When $X=\mathbb P^1$ the envelope $\tilde{\Phi}$ is $\mathcal{C}^{1,1}$ on $\mathbb P^1\times \overline{\mathbb D}^\times$.
\end{theorem}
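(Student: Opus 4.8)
The plan is to import all the genuine analytic difficulty from the Chu--Tosatti--Weinkove theorem (Theorem \ref{thm:C11}), applied to the \emph{twisted} envelope $\Phi$, and then transport the resulting regularity to $\tilde{\Phi}$ through the explicit change of variables furnished by Lemma \ref{lem:twist}. Thus the only work that is really ours is to check that the hypotheses of Theorem \ref{thm:C11} hold for $\Phi$ and that the twisting is smooth enough to preserve $\mathcal{C}^{1,1}$ away from the singular point.

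First I would verify that $\Phi$ satisfies the hypotheses of Theorem \ref{thm:C11}. The boundary data $\phi(z,\tau)=\phi_\tau(z)$ is smooth on $\mathbb P^1\times \partial\mathbb D$, and for each $\tau$ with $|\tau|=1$ the map $\rho_\tau$ is a biholomorphism of $\mathbb P^1$, so $\rho_\tau^*\omega=\omega+dd^c\phi_\tau$ is again a K\"ahler form; hence $\phi_\tau\in\mathcal K(\mathbb P^1,\omega)$. Therefore Theorem \ref{thm:C11} applies and yields $\Phi\in\mathcal{C}^{1,1}(\mathbb P^1\times\overline{\mathbb D})$, the regularity holding up to both the centre and the boundary of the disc (and note $\Phi$ itself has no singularity at $\tau=0$, unlike $\tilde{\Phi}$).

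Next I would rewrite Lemma \ref{lem:twist} as a formula for $\tilde{\Phi}$. Setting $w=\tau z$, so that $z=\rho_\tau^{-1}(w)=w/\tau$, the lemma reads
\begin{equation}
\tilde{\Phi}(w,\tau)=\Phi(\rho_\tau^{-1}(w),\tau)-\phi(\rho_\tau^{-1}(w),\tau)+\ln|\tau|^2
\end{equation}
for $(w,\tau)\in\mathbb P^1\times\overline{\mathbb D}^\times$. The twisting map $F(w,\tau):=(\rho_\tau^{-1}(w),\tau)=(w/\tau,\tau)$ is a $\mathcal{C}^\infty$ diffeomorphism of $\mathbb P^1\times\overline{\mathbb D}^\times$ onto itself (it fixes $0$ and $\infty$, is holomorphic in $w$ for each $\tau\ne 0$, and depends holomorphically on $\tau$), with smooth inverse $F^{-1}(z,\tau)=(\tau z,\tau)$. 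Composition with such a diffeomorphism preserves $\mathcal{C}^{1,1}_{\mathrm{loc}}$: by the chain rule $D(\Phi\circ F)=(D\Phi\circ F)\,DF$ is a product of a locally Lipschitz factor $D\Phi\circ F$ (as $D\Phi$ is Lipschitz and $F$ smooth) with a smooth factor $DF$, hence locally Lipschitz. So the first term of the displayed formula is $\mathcal{C}^{1,1}$ on every compact subset of $\mathbb P^1\times\overline{\mathbb D}^\times$, while the remaining two terms are smooth there, giving $\tilde{\Phi}\in\mathcal{C}^{1,1}(\mathbb P^1\times\overline{\mathbb D}^\times)$.

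The one point requiring care, and the reason the statement is confined to the punctured region, is the behaviour as $\tau\to 0$: there the twist $\rho_\tau^{-1}$ degenerates (collapsing $\mathbb P^1$ onto its fixed points) and $DF$ blows up, so the $\mathcal{C}^{1,1}$ bounds are not uniform up to $\tau=0$. Indeed this degeneration, together with the $\ln|\tau|^2$ term, is precisely what produces the Lelong-number singularity of $\tilde{\Phi}$ at $(z_0,0)$. This is the main obstacle to a cleaner statement, but it is handled routinely: on any compact $K\subset\overline{\mathbb D}^\times$ one has $|\tau|\ge\delta>0$, so $F$ and $F^{-1}$ are smooth with bounded derivatives, and since $\mathbb P^1$ is compact the argument above gives a genuine $\mathcal{C}^{1,1}$ estimate on $\mathbb P^1\times K$; exhausting $\overline{\mathbb D}^\times$ by such $K$ completes the proof.
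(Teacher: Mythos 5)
Your proposal is correct and follows exactly the paper's own argument: the paper likewise applies the Chu--Tosatti--Weinkove result (Theorem \ref{thm:C11}) to the twisted envelope $\Phi$ and then transfers the $\mathcal{C}^{1,1}$ regularity to $\tilde{\Phi}$ via Lemma \ref{lem:twist}. The paper states this in two sentences (also noting B\l ocki's theorem as an alternative since $\mathbb P^1$ has nonnegative bisectional curvature); your additional verifications --- that $\phi_\tau\in\mathcal K(\mathbb P^1,\omega)$, and that composition with the smooth twisting diffeomorphism preserves $\mathcal{C}^{1,1}$ on compact subsets of $\mathbb P^1\times\overline{\mathbb D}^\times$ --- are precisely the routine details the paper leaves implicit.
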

\begin{proof}
From the work of Chu-Tossati-Weinkove (Theorem \ref{thm:C11}) we have $\Phi$ is $
\mathcal{C}^{1,1}$ over $X\times \overline{\mathbb D}$ (we could also use the work of B\l ocki \cite{Blocki2} as $\mathbb P^1$ has nonnegative bisectional curvature so \cite[Theorem 1.4]{Blocki2} applies).  Thus the desired statement for $\tilde{\Phi}$ follows from Lemma \ref{lem:twist}.  
\end{proof}

\begin{remark}\label{rem:C1alphaingeneral}
It seems likely on a general compact Riemann surface that $\Phi$ also satisfies some regularity, and should be at least $\mathcal{C}^{1,\alpha}$ for any $\alpha<1$.    Our reason for saying this is $\Phi$ is describing a weak geodesic ray in the space of K\"ahler potentials on $X$, and such regularity is known to hold for many related geodesic rays, such as those considered by Phong-Sturm \cite{PhongSturmRegularity}.
\end{remark}

\begin{remark}
A point $c$ lying on the boundary $\partial \Omega_t$ of the Hele-Shaw domain for $t$ in some non-trivial interval is referred to as a \emph{stationary point}.  Theorem \ref{thm:c11} combined with Corollary \ref{cor:HSmovement}(1) imply that the Hele-Shaw flow on $\mathbb P^1$ with a smooth area form and empty initial condition never develops any stationary points (as far as we are aware this statement in the smooth case is new).
\end{remark}

\section{Harmonic discs}

We return now to the case of a general compact Riemann surface $X$ with K\"ahler form $\omega$.  The next theorem describes precisely the proper harmonic discs of the weak solution to the HMAE in terms of the Riemann map of those weak Hele-Shaw domains that are simply connected.

As above consider
\begin{equation}
\tilde{\Phi} :=\sup \left\{ 
\begin{array}{c}
\Psi\in \Psh(X\times \overline{\mathbb D}, \pi_X^*\omega) \text{ : } \limsup_{\zeta\to \zeta'} \Psi(\zeta)\le 0\text{ for } \zeta'\in X\times \partial \mathbb D \\\quad\quad \text{ and } \nu_{(z_0,0)}(\Psi)\ge 1
\end{array}
\right\}.
\end{equation}


\begin{definition}
We say the graph of a holomorphic $f:\mathbb D\to X$ is a proper \emph{harmonic disc} for $\tilde{\Phi}$ if  $\pi_X^*\omega + dd^c\tilde{\Phi}$ vanishes along the graph of $f$ away from the origin, or said another way
$\tilde{\Phi}(f(\tau),\tau)$ is $f^*\omega$-harmonic on $\mathbb D^{\times}$.
\end{definition}


\begin{theorem}[Regularity Theorem, Ross-Witt Nystr\"om \cite{RWHarmonicDiscs}] \label{thm:holdiscs}
The graph of a holomorphic $f:\mathbb D\to X$ is a proper harmonic disc of $\tilde{\Phi}$ if and only if either\begin{enumerate}
\item $f$ is the constant map $f(\tau)=z_0$ for all $\tau \in \mathbb D$ (where $z_0$ is our given distinguished point in $X$)
\item For some $t$ the weak Hele-Shaw domain $\Omega_t$ for $\omega$ is simply connected and $f:\mathbb D\to \Omega_t$ is a biholomorphism (i.e.\ a Riemann map) with $f(0) = z_0$.
\item $f$ is the constant map $f(\tau)= z'$ for all $\tau\in \mathbb D$, for some fixed $z'\in X\setminus \Omega_1$.
\end{enumerate}
Moreover in the first case $H(f(\tau),\tau)\equiv -1$, in the second case $H(f(\tau),\tau)\equiv t-1$ and in the third $H(f(\tau),\tau)\equiv 0$.  
\end{theorem}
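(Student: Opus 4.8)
The plan is to reduce the statement to understanding, for each point $(f(\tau),\tau)$ on the graph, which level $t'$ realizes the supremum in the Duality Theorem representation $\tilde{\Phi}(z,\tau)=\sup_{t'}\{\psi_{t'}(z)+(1-t')\ln|\tau|^2\}$. Writing $F(\tau):=(f(\tau),\tau)$, the defining condition for a proper harmonic disc is $F^*(\pi_X^*\omega+dd^c\tilde{\Phi})=0$ on $\mathbb D^{\times}$, i.e.\ that $w:=\tilde{\Phi}\circ F$ is $f^*\omega$-harmonic. I will show that along such a graph the maximizing level is a single constant $t$, so that $w=\psi_t\circ f+(1-t)\ln|\tau|^2$, after which the Monge-Amp\`ere mass formula $\omega_{\psi_t}=(1-\chi_{\Omega_t})\omega+t\delta_{z_0}$ from \eqref{eq:laplaceenvelope2} makes both the harmonicity and the value of $H$ transparent. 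The two implications are then handled separately.

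\textbf{The ``if'' direction.} Suppose $\Omega_t$ is simply connected with Riemann map $f\colon\mathbb D\to\Omega_t$, $f(0)=z_0$. Let $G_{\Omega_t}(\cdot,z_0)$ be the Green's function of $\Omega_t$ with $dd^cG_{\Omega_t}=\delta_{z_0}$ and $G_{\Omega_t}|_{\partial\Omega_t}=0$; by conformal invariance of the Green's function $G_{\Omega_t}(f(\tau),z_0)=\ln|\tau|^2$. I would then prove the tangent inequality $\psi_{t'}(f(\tau))-\psi_t(f(\tau))\le(t'-t)\ln|\tau|^2$ for every $t'$ by a maximum-principle comparison: using \eqref{eq:laplaceenvelope2} to compute $dd^c\psi_{t'}$ and $dd^c\psi_t$ on $\Omega_t$, the auxiliary function $\psi_{t'}-\psi_t+(t-t')G_{\Omega_t}(\cdot,z_0)$ is $\omega$-subharmonic (the point masses at $z_0$ cancel, leaving $(1-\chi_{\Omega_{t'}})\omega\ge0$) and equals $\psi_{t'}\le0$ on $\partial\Omega_t$, hence is $\le0$ on $\Omega_t$. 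This says exactly that the supremum in the Duality Theorem is attained at $t'=t$, whence $w=\psi_t\circ f+(1-t)\ln|\tau|^2$ and $H(f(\tau),\tau)\equiv t-1$. Finally $F^*(\pi_X^*\omega+dd^c\tilde{\Phi})=f^*\omega_{\psi_t}=f^*\big((1-\chi_{\Omega_t})\omega+t\delta_{z_0}\big)$, which vanishes on $\mathbb D^{\times}$ since $f$ maps into $\Omega_t$ and hits $z_0$ only at $\tau=0$. The two constant cases are read off directly from the Duality Theorem: at $z_0$ one gets $\tilde{\Phi}(z_0,\tau)=\ln|\tau|^2$ (maximizer $t'=0$, $H=-1$) because $\psi_{t'}(z_0)=-\infty$ for $t'>0$, while at $z'\in X\setminus\Omega_1$ one gets $\tilde{\Phi}(z',\tau)=0$ (maximizer $t'=1$, $H=0$) because $\psi_{t'}(z')=0$ for all $t'\le1$.

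\textbf{The ``only if'' direction.} Conversely, let the graph of $f$ be a proper harmonic disc, so $w$ is $f^*\omega$-harmonic. Each summand $W_{t'}:=\psi_{t'}\circ f+(1-t')\ln|\tau|^2$ is $f^*\omega$-subharmonic on $\mathbb D^{\times}$ and $w=\sup_{t'}W_{t'}$; at a point where the maximizing level is $t_0$, the difference $w-W_{t_0}$ is $f^*\omega$-superharmonic, nonnegative, and vanishes there, so by the minimum principle it vanishes on a neighbourhood. Hence locally $w=W_{t_0}$ with $t_0$ constant, and by connectedness of $\mathbb D^{\times}$ the maximizing level is a global constant $t$, giving $w=\psi_t\circ f+(1-t)\ln|\tau|^2$. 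Harmonicity then forces $f^*\omega_{\psi_t}=0$ on $\mathbb D^{\times}$, so by \eqref{eq:laplaceenvelope2} the map $f$ sends $\mathbb D^{\times}$ into $\Omega_t$ and avoids $z_0$ off the origin. If $f$ is non-constant, the boundary condition $\tilde{\Phi}\to0$ as $|\tau|\to1$ shows $f(\tau)\to\partial\Omega_t$, so $f$ is a proper, hence finite branched covering, map onto $\Omega_t$; since $f^{-1}(z_0)\subseteq\{0\}$ it must satisfy $f(0)=z_0$, and comparing the $\ln|\tau|^2$-growth of $w=\psi_t\circ f+(1-t)\ln|\tau|^2$ near $0$ with the lower bound $\tilde{\Phi}\ge\ln|\tau|^2$ forces the branching order at $0$, and hence the degree, to be one. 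Thus $f$ is a Riemann map of the necessarily simply connected $\Omega_t$. The case $f$ constant is disposed of by the Duality Theorem: a constant disc is harmonic only when $\tilde{\Phi}(z',e^{-s/2})$ is affine in $s=-\ln|\tau|^2$ on all of $(0,\infty)$, and computing its asymptotic slopes (namely $t_{z'}-1$ as $s\to0^+$, where $t_{z'}=\sup\{t:z'\notin\Omega_t\}$, against $0$ as $s\to\infty$) shows this happens precisely for $z'=z_0$ and for $z'\in X\setminus\Omega_1$.

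\textbf{Main obstacle.} The technical heart is the maximum-principle comparison in the ``if'' direction: it must combine the Monge-Amp\`ere mass formula \eqref{eq:laplaceenvelope2}, the conformal invariance of the Green's function, and the free-boundary ($\mathcal C^{1,1}$) regularity of $\psi_t$ to conclude that the Legendre maximizer is exactly $t$, and one must justify applying the maximum principle despite $\psi_t$ being only $\mathcal C^{1,1}$ across $\partial\Omega_t$. On the converse side the delicate points are the touching argument establishing that the maximizing level is locally constant, together with the properness and degree computation that identifies $f$ with a Riemann map.
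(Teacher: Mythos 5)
Your proposal is correct, and for the main (``only if'') direction it follows essentially the same route as the paper: write $\tilde{\Phi}$ along the graph via the Duality Theorem, use the maximum/minimum principle to show the Legendre-maximizing parameter $t_0$ is a single constant on all of $\mathbb D^{\times}$ (your superharmonic-minimum-principle phrasing of $w-W_{t_0}$ is literally the paper's subharmonic function $\alpha=W_{t_0}-w$ with signs reversed, and the paper's Lemma \ref{lem:ham2} plays the role of your compactness/continuity argument for attainment of the supremum), then invoke the mass formula \eqref{eq:laplaceenvelope2} to place the image in $\Omega_{t_0}$, the boundary condition \eqref{eq:continuityboundary} for properness, and the Lelong-number comparison $mt_0+(1-t_0)\le 1$ to force multiplicity one, with your ``finite branched covering'' argument replacing the paper's explicit winding-number Lemma \ref{lem:winding}; your slope analysis of the constant discs is likewise equivalent to the paper's evaluation of the identity at $\tau=0$ and $|\tau|\to 1$. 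The one genuine difference is that you actually prove the converse (``if'') direction, which the paper dismisses as ``easier and left to the reader'': your tangent inequality $\psi_{t'}-\psi_t+(t-t')G_{\Omega_t}(\cdot,z_0)\le 0$ on $\Omega_t$, obtained by combining the mass formula with conformal invariance of the Green's function and the maximum principle, is a clean self-contained way to see that the Riemann map realizes the Legendre maximizer at $t'=t$, and it buys an explicit verification (including the values of $H$) that the paper never writes down. The only step you should flag as needing more care is one the paper itself elides: concluding $f(\tau')\in\Omega_{t_0}$ (rather than merely $f(\tau')\in\overline{\Omega}_{t_0}$, since $\partial\Omega_{t_0}$ has measure zero) from the vanishing of $(1-\chi_{\Omega_{t_0}})\omega$ near $f(\tau')$; since your treatment of this point is identical to the survey's, it is not a gap relative to the paper, but in a fully detailed write-up both arguments would need to rule out boundary points of the contact set there.
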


\begin{remark}
More generally we would say that a proper holomorphic curve $g:\Sigma \to X\times \mathbb{D}$ is a proper harmonic curve of $\tilde{\Phi}$ if $\tilde{\Phi}\circ g$ was $(\pi_X\circ g)^*\omega$ harmonic except at $g^{-1}(z_0,0)$. But it is not hard to see that any such $g$ would have to be a composition of one of the proper harmonic discs described in Theorem \ref{thm:holdiscs} with a finite cover of the unit disc, so in particular having the same image.
\end{remark}

Before the proof we need the following statement:

\begin{lemma} \label{lem:ham2}
Fix $0<|\tau|<1$.  Then $$H(z,\tau)=t-1 \qquad \Longleftrightarrow \qquad \tilde{\Phi}(z,\tau)=\psi_t(z)+(1-t)\ln |\tau|^2.$$
\end{lemma}

\begin{proof}
Fix a point $z\in X$ and $0<|\tau_0|<1$ and let $s_0 = -\ln |\tau_0|^2$.   From the Duality Theorem (\ref{legendre2})  $$\tilde{\Phi}(z,\tau_0)=\sup_{t}\{\psi_t(z)+(1-t)\ln |\tau_0|^2\}.$$    Now $\psi_t(z)$ is continuous in $t$ (Lemma \ref{lem:convexity}), so for some $t$
$$\tilde{\Phi}(z,\tau_0)=\psi_{t}(z)+(1-t)\ln |\tau_0|^2 = \psi_{t}(z) - (1-t)s_0.$$
On the other hand, we certainly have
$$\tilde{\Phi}(z,e^{-s/2})\geq \psi_{t}(z)-(1-t)s\text{ for all } s.$$
So the slope of the convex function $\tilde{\Phi}(z,e^{-s/2})$  at the point $(s_0,\tilde{\Phi}(z,e^{-s_0/2}))$ is equal to the slope of the linear function $s\mapsto \psi_t(z) - (1-t)s$, which is clearly  $t-1$.   Hence
$$H(z,\tau_0)=\frac{\partial}{\partial s^+}_{|s=s_0} \tilde{\Phi}(z,e^{-s/2})=t-1,$$
which is enough to prove the lemma.
\end{proof}

\begin{proof}[Proof of Theorem \ref{thm:holdiscs}]
 
We shall prove if the graph of $f$ is a proper harmonic disc for $\tilde{\Phi}$ then it is of one of the three forms in the statement of the theorem.  Fix some $\tau_0\in \mathbb D^\times$ and set
$$ t_0 = H(f(\tau_0),\tau_0) +1.$$
We claim
\begin{equation}
 \psi_{t_0}(f(\tau)) + (1-t_0) \ln |\tau|^2 = \tilde{\Phi}(f(\tau),\tau) \text{ for all }\tau\in \mathbb D. \label{eq:thmregeq1}
\end{equation}
To see this, consider
$$ \alpha(\tau): = \psi_{t_0}(f(\tau)) + (1-t_0) \ln |\tau|^2 - \tilde{\Phi}(f(\tau),\tau) \text{ for }\tau\in \mathbb D.$$
Then $\alpha$ is subharmonic (since $\tilde{\Phi}$ is $\pi_X^*\omega$-harmonic along $\{(f(\tau),\tau): \tau\neq 0\}$), satisfies $\alpha\le 0$  by the Duality Theorem \eqref{legendre2} and $\alpha(\tau_0) = 0$ by Lemma \ref{lem:ham2}.  If $f(0)\neq  0$ then  $\tilde{\Phi}$ is $\pi_X^*\omega$-harmonic even over   $\{(f(\tau),\tau): \tau\in \mathbb D\}$ and so \eqref{eq:thmregeq1} follows from the maximum principle.  If $f(0)=0$ then by looking at the Lelong number, $\alpha$ extends over $\tau=0$ and the maximum principle still applies to give \eqref{eq:thmregeq1}.   In particular Lemma \ref{lem:ham2} combined with \eqref{eq:thmregeq1} implies  $H(f(\tau),\tau)\equiv t_0-1$ for all $\tau\neq 0$, giving the last statement of the theorem.

Suppose now that $f$ is non-constant.  We shall show $f$ is as in case (2) of the statement, by first proving the image of $f$ lies in $\Omega_{t_0}$ and then proving it is a biholomorphism taking $0$ to $z_0$.     Observe if $\tilde{\Phi}(f(\tau),\tau)$ is $f^*\pi_X^*\omega$ harmonic on a neighbourhood of some $\tau'\in \mathbb D$, then \eqref{eq:thmregeq1} implies $\psi_{t_0}$ is $\omega$-harmonic on a neighbourhood of $f(\tau')$.  But  Corollary \ref{cor:indofphi} implies
\begin{equation}
\omega_{\psi_t} = (1-\chi_{\Omega_t}) \omega +t\delta_{z_0}.\label{eq:laplaceenvelope}
\end{equation}
so this in turn implies $f(\tau')\in \Omega_{t_0}$.    

By hypothesis, $\tilde{\Phi}(f(\tau),\tau)$ is $f^*\pi_X^*\omega$ harmonic on a neighbourhood of any non-zero $\tau'\in \mathbb D$, so $f(\tau')\in \Omega_{t_0}$ for all $\tau'\neq 0$.  In particular $\Omega_{t_0}$ is non-empty, so we must have $t_0>0$ and so $z_0\in \Omega_{t_0}$ by Corollary \ref{cor:open}.    If $f(0)=z_0$ then $f(0)\in \Omega_{t_0}$ .  On the other hand, if $f(0)\neq z_0$ then  $\tilde{\Phi}(f(\tau),\tau)$ is $f^*\pi_X^*\omega$-harmonic on a neighbourhood of $0\in \mathbb D$, giving $f(0)\in \Omega_{t_0}$.  Thus in either case  $f(0)\in \Omega_{t_0}$, and hence the image of $f$ lies in $\Omega_{t_0}$ as claimed.

We next prove $f$ is proper.   To see this let $\tau_i$ be a sequence in $\mathbb D$ such that $|\tau_i|\to 1$ as $i\to \infty$.  Then by \eqref{eq:thmregeq1} and then \eqref{eq:continuityboundary} 
$$ \lim_{i\to \infty} \psi_{t_0}(f(\tau_i),\tau_i)=\lim_{i\to \infty} \tilde{\Phi}(f(\tau_i),\tau_i) - (1-t_0)\ln |\tau_i| = 0.$$
But $\Omega_{t_0}$ is exhausted by the compact sets $\{z : \psi_{t_0}(z)\le  - 1/n\}$ for $n\in \mathbb N$ and $f(\tau_i)$ escapes to infinity in $\Omega_{t_0}$.  Thus $f$ is proper as claimed.

Next we show the preimage  $S:=f^{-1}(z_0)$ is precisely the point $0$ with multiplicity one.  Given this, the fact that $f$ is a biholomorphism with $f(0) = z_0$ follows from a standard argument with the winding number (Lemma \ref{lem:winding}).   Observe $f(\tau)\neq z_0$ for any $\tau\neq 0$, since otherwise the right hand side of \eqref{eq:thmregeq1} would be $-\infty$  whereas the left hand side is finite.  If $f(0)\neq z_0$ then $S$ would be empty, which is absurd by Lemma \ref{lem:winding}.  So we conclude $z_0\in S$ with some multiplicity $m\ge 1$.  Using \eqref{eq:thmregeq1} once again
$$ \psi_{t_0}(f(\tau)) + (1-t_0) \ln |\tau|^2 = \tilde{\Phi}(f(\tau),\tau) \ge  \ln |\tau|^2.$$
Clearly  $\psi_{t_0}(f(\tau))$ has Lelong number $mt_0$ at $0$, so the left hand side has Lelong number $mt_0 + (1-t_0)$ at $0$.  By the right hand side has Lelong number 1, giving $mt_0 + (1-t_0) \le 1$, so $m=1$.      So in conclusion we have shown if $f$ is not-constant then $f$ is of the form case (2). 

  Suppose now $f\equiv z'$ is constant.  If $z'=z_0$ then $f$ is as in case (1).  Otherwise $z'\neq z_0$, and so $\tilde{\Phi}(f(\tau),\tau)$ is $f^*\pi_X^*\omega$-harmonic even near $\tau=0$.  Again \eqref{eq:thmregeq1} gives
$$ \tilde{\Phi}(z',\tau) = \psi_{t_0}(z') + (1-t_0) \ln |\tau|^2 \text{ for } \tau \in \mathbb D.$$
But this implies $t_0=1$, else otherwise the right hand side takes the value $-\infty$ at the point $\tau=0$, whereas the left hand side is finite.   Letting $\tau\to 1$ and using \eqref{eq:continuityboundary}
$$\psi_1(z') = \lim_{\tau\to 1} \tilde{\Phi}(z',\tau) = 0$$
and hence $z'\in \Omega_1$, implying $f$ is as in case (3).

The converse, namely that each of the three listed functions, are proper harmonic discs is easier and is left to the reader.
\end{proof}

\begin{lemma} \label{lem:winding}
If $f\colon D_1\to D_2$ is a proper holomorphic map between two open domains in $\mathbb{P}^1$ then the number of preimages $N_p:=\#\{f^{-1}(p)\}$ (counted with multiplicity) is constant. 
\end{lemma}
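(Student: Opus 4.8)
The plan is to show that the counting function $N_p$ is locally constant on $D_2$ and then invoke connectedness of the domain $D_2$. The essential input is the properness of $f$, which prevents preimages from escaping to the boundary of $D_1$ as the target point varies, together with the classical argument principle, which computes each local contribution to $N_p$ as a winding number that is stable under small perturbations of the target.

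First I would record that $f$ is non-constant and that $f^{-1}(p)$ is finite for every $p\in D_2$: properness makes $f^{-1}(p)$ compact, while discreteness of the fibres of a non-constant holomorphic map makes it a discrete set, and a compact discrete set is finite. (A constant map is excluded in the relevant setting, as it fails to be proper onto the open set $D_2$ when $D_1$ is a proper subdomain.) Write $f^{-1}(p)=\{a_1,\dots,a_k\}$ and, working in local holomorphic coordinates on $\mathbb{P}^1$ so that the argument principle applies verbatim, choose pairwise disjoint coordinate discs $U_j\ni a_j$ with $\overline{U_j}\subset D_1$ and $f^{-1}(p)\cap\overline{U_j}=\{a_j\}$.

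The crux is to produce a neighbourhood $V$ of $p$ with $f^{-1}(V)\subset\bigcup_j U_j$, and this is exactly where properness enters. I would argue by contradiction: if no such $V$ existed, there would be points $z_n\notin\bigcup_j U_j$ with $f(z_n)\to p$. Fixing a compact neighbourhood $\overline{V_0}\ni p$, properness makes $f^{-1}(\overline{V_0})$ compact, so the $z_n$ (which lie in it for large $n$) accumulate at some $z_\infty\notin\bigcup_j U_j$ with $f(z_\infty)=p$, contradicting $f^{-1}(p)\subset\bigcup_j U_j$. Having fixed such a $V$, note that $f$ is bounded away from $p$ on the compact set $\bigcup_j\partial U_j$, so after shrinking $V$ I may assume $f(z)\neq q$ for all $z\in\bigcup_j\partial U_j$ and all $q\in V$. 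For each $j$ the argument principle then gives that the number of solutions of $f(z)=q$ in $U_j$, counted with multiplicity, equals $\frac{1}{2\pi i}\oint_{\partial U_j}\frac{f'(z)}{f(z)-q}\,dz$, an integer-valued continuous function of $q\in V$ and hence constant, with value at $q=p$ the multiplicity of $a_j$. Summing over $j$ and using $f^{-1}(q)\subset\bigcup_j U_j$ yields $N_q=N_p$ for all $q\in V$.

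Thus $N$ is locally constant on $D_2$, and since $D_2$ is connected it is constant, as claimed. The step I expect to be the main obstacle is the properness argument of the third paragraph: it is the only place where the hypothesis is genuinely used, and it is precisely what guarantees that no preimage mass leaks out through $\partial D_1$ as $q$ moves, so that the count recorded by the winding numbers over the $\partial U_j$ accounts for every preimage of $q$.
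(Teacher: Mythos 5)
Your proof is correct, and it is structured differently from the one in the paper. You prove that $q\mapsto N_q$ is locally constant --- isolating the finitely many points of $f^{-1}(p)$ in disjoint discs $U_j$, using properness to confine $f^{-1}(V)$ to $\bigcup_j U_j$ for $V$ a small neighbourhood of $p$, and then invoking the argument principle, whose contour-integral form is integer-valued and continuous in $q$ --- and you conclude by connectedness of $D_2$. The paper instead compares two arbitrary points $p,q\in D_2$ in a single step: it joins them by a smooth curve $\gamma$, covers the compact set $f^{-1}(\gamma)$ by a finite union $U$ of discs relatively compact in $D_1$, and observes that, since $f(\partial U)$ cannot cross $\gamma$, the winding number of the image of each boundary component of $U$ is the same about $p$ as about $q$; since these winding numbers count preimages, $N_p=N_q$. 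Both arguments rest on exactly the same two ingredients --- properness, to prevent preimages from leaking out through $\partial D_1$, and winding numbers, to count them --- but they deploy properness at different scales: you apply it to a small compact neighbourhood $\overline{V_0}$ of a single target point, the paper to the preimage of a whole path. Your local-to-global version is the standard argument and yields slightly finer information (the fibre over nearby $q$ decomposes into local multiplicities at each $a_j$, and you also dispose of the degenerate constant-map case explicitly); the paper's path argument is shorter and does not need fibre finiteness as a preliminary step, since the finite cover of $f^{-1}(\gamma)$ plays that role. The only points worth making explicit in your write-up are that $\overline{V_0}$ must be taken inside $D_2$ (properness concerns compact subsets of $D_2$), and that $V$ should be taken connected so that the integer-valued continuous count is constant on it; both are cosmetic.
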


\begin{proof}
Let $\gamma$ be a smooth curve in $D_2$ connecting two points $p$ and $q$ and let $U$ be a finite union of open discs compactly supported in $D_1$ which together cover the compact set $f^{-1}(\gamma)$. Since the image of any boundary component of $U$ cannot cross $\gamma$  the winding numbers of the image of any such boundary component with respect to $p$ and $q$ must be the same.  Since that winding number counts the number of preimages inside that component we get by adding up the winding numbers for the different boundary components that $N_p=N_q$.
\end{proof}

From this we get a description of all the proper harmonic discs for a more classical version of the HMAE, at least when $X=\mathbb P^1$.

\begin{corollary}
Let $X=\mathbb P^1$.  Then the graph of $g:\mathbb D\to \mathbb P^1$ is a proper harmonic disc for the weak solution to the HMAE over $X\times \overline{\mathbb D}$ with boundary data $\phi(z,\tau)$ from \eqref{eq:boundarydisc} if and only if either (1) $g$ is the constant map $g(\tau) = z_0$ for all $\tau\in \mathbb D$ or (2) for some $t$ the weak-Hele shaw domain $\Omega_t$ for $\omega$ is simply connected and the map $\tau\mapsto \tau g(\tau)$ is a Riemann-map from $\mathbb D$ to $\Omega_t$ taking $0$ to $z_0$ or (3) $g(\tau) = \tau^{-1} z'$ for some fixed $z'\in \Omega_1$. 
\end{corollary}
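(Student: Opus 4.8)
The plan is to deduce this from the Regularity Theorem (Theorem \ref{thm:holdiscs}) by transporting its classification across the twisting map. Write $m(z,\tau):=\tau z$ and let $T\colon\mathbb P^1\times\overline{\mathbb D}^\times\to\mathbb P^1\times\overline{\mathbb D}^\times$ be the fibrewise biholomorphism $T(z,\tau):=(\tau z,\tau)=(m(z,\tau),\tau)$, with inverse $(w,\tau)\mapsto(\tau^{-1}w,\tau)$. Given a holomorphic $g\colon\mathbb D\to\mathbb P^1$ I set $f(\tau):=\tau g(\tau)$, so that $T$ carries the graph of $g$ onto the graph of $f$; conversely $g(\tau)=\tau^{-1}f(\tau)$, and $g\mapsto f$ is a bijection between holomorphic maps $\mathbb D^\times\to\mathbb P^1$. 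As $T$ is a biholomorphism of $\mathbb P^1\times\overline{\mathbb D}^\times$, the graph of $g$ is proper exactly when the graph of $f$ is.

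The heart of the argument is the claim that the graph of $g$ is a proper harmonic disc for $\Phi$ if and only if the graph of $f$ is a proper harmonic disc for $\tilde\Phi$. To prove it I would compare the two solution forms $\Omega_\Phi:=\pi_X^*\omega+dd^c\Phi$ and $\Omega_{\tilde\Phi}:=\pi_X^*\omega+dd^c\tilde\Phi$. By Lemma \ref{lem:twist}, $\Phi=T^*\tilde\Phi+\phi-\ln|\tau|^2$ on $\mathbb P^1\times\overline{\mathbb D}^\times$; since $T$ is holomorphic and $dd^c\ln|\tau|^2=0$ on $\overline{\mathbb D}^\times$, applying $dd^c$ and using $\pi_X\circ T=m$ gives
\[
\Omega_\Phi=T^*\Omega_{\tilde\Phi}-\eta,\qquad \eta:=m^*\omega-\pi_X^*\omega-dd^c\phi.
\]
The decisive point is that $\eta=0$, i.e.\ $m^*\omega=\pi_X^*\omega+dd^c\phi$. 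Restricted to a slice $\{\tau=\text{const}\}$ this reads $\rho_\tau^*\omega-\omega=dd^c\phi_\tau$, which is exactly the defining property of $\phi_\tau$; hence $\eta$ is, at worst, a form pulled back from the $\tau$-disc, and the slice identity determines $\phi$ only up to an additive function of $\tau$. Adjusting the normalisation of $\phi_\tau$ by such a function changes $\Phi$ only by the harmonic extension into $\mathbb D$ of that function, hence alters neither $\Omega_\Phi$ nor the harmonic-disc property; so I may normalise $\phi$ so that the global identity holds and $\eta=0$. Then $\Omega_\Phi=T^*\Omega_{\tilde\Phi}$, and $T$, being a biholomorphism preserving the fibration over $\mathbb D$, carries the degeneracy leaves of $\Omega_{\tilde\Phi}$ precisely onto those of $\Omega_\Phi$. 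Equivalently, evaluating along the graphs, $\eta=0$ yields $dd^c_\tau[\phi(g(\tau),\tau)]=f^*\omega-g^*\omega$ on $\mathbb D^\times$, so by Lemma \ref{lem:twist} the function $\Phi(g(\tau),\tau)$ is $g^*\omega$-harmonic exactly when $\tilde\Phi(f(\tau),\tau)$ is $f^*\omega$-harmonic, which is the claim.

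With the equivalence in hand it remains to feed the three possibilities for $f$ from Theorem \ref{thm:holdiscs} through $g=\tau^{-1}f$. If $f\equiv z_0$, the common fixed point $0$ of the $\rho_\tau$, then $g=\tau^{-1}z_0\equiv z_0$, giving case (1). If $f$ is a Riemann map $\mathbb D\to\Omega_t$ onto a simply connected $\Omega_t$ with $f(0)=z_0$, then since $f(0)=0$ the map $g=\tau^{-1}f$ extends holomorphically across $\tau=0$ with $g(0)=f'(0)\neq0$, and by construction $\tau\mapsto\tau g(\tau)=f(\tau)$ is a Riemann map onto $\Omega_t$ sending $0$ to $z_0$, which is case (2). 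Finally, if $f\equiv z'$ is the constant disc of Theorem \ref{thm:holdiscs}(3) then $g(\tau)=\tau^{-1}z'$ (a holomorphic map $\mathbb D\to\mathbb P^1$ with $g(0)=\infty$), which is case (3). Running the bijection backwards shows that each of the three listed $g$ corresponds to an admissible $f$, so these are precisely the proper harmonic discs for $\Phi$.

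The step I expect to require the most care is the vanishing $\eta=0$ and the normalisation bookkeeping behind it. The slicewise identity is immediate, but the naive choice $\int_X\phi_\tau\,\omega=0$ contributes a correction that is a function of $\tau$ alone and need not be pluriharmonic, so it does not satisfy the global identity on the nose; the resolution is that the harmonic-disc notion depends only on $\Omega_\Phi$, which is insensitive to this correction, reducing matters to the clean intertwining $\Omega_\Phi=T^*\Omega_{\tilde\Phi}$. Once this is secured, everything else is the formal transport of Theorem \ref{thm:holdiscs} together with the elementary analysis of $g=\tau^{-1}f$ at $\tau=0$.
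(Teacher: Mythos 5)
Your proposal is correct and is, at bottom, the same argument as the paper's: the paper's proof of this corollary is a single sentence observing that Lemma \ref{lem:twist} makes the graph of $f$ a harmonic disc for $\tilde\Phi$ precisely when the graph of $g(\tau)=\tau^{-1}f(\tau)$ is one for $\Phi$, and then transporting the three cases of Theorem \ref{thm:holdiscs}. What you add is the justification of that equivalence, and you have correctly located the one genuine subtlety the paper elides: applying $dd^c$ to the identity of Lemma \ref{lem:twist} leaves the error term $\eta=m^*\omega-\pi_X^*\omega-dd^c\phi$ (with $m(z,\tau)=\tau z$ in your notation), and with the paper's normalisation $\int_X\phi_\tau\,\omega=0$ this term really is nonzero: already for $\omega=\omega_{FS}$ the natural global potential is $\psi(z,\tau)=\log\bigl((1+|\tau z|^2)/(1+|z|^2)\bigr)$, and the correction $c(\tau)=\int_X\psi(\cdot,\tau)\,\omega_{FS}$ is not harmonic in $\tau$. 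Your resolution --- that changing the boundary data by a function of $\tau$ alone changes $\Phi$ by a pluriharmonic pullback from the disc, hence changes neither $\pi_X^*\omega+dd^c\Phi$ nor its harmonic discs --- is exactly right, and is the reason the corollary is insensitive to this normalisation issue.

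The one step you assert without proof is that the slice identity forces $\eta$ to be a pullback from the $\tau$-disc (equivalently, that some additive renormalisation of $\phi$ by a function of $\tau$ achieves $\eta=0$). This is not a formal consequence of the slice identity: a closed real $(1,1)$-form vanishing on every fibre can still have mixed components along $dz\wedge d\bar\tau$ and $d\tau\wedge d\bar z$. It is true here for reasons special to $X=\mathbb P^1$: closedness of $\eta$ forces its mixed component to restrict on each fibre to a holomorphic $1$-form, and $\mathbb P^1$ carries none; alternatively, and more concretely, write $\omega=\omega_{FS}+dd^cu$ and check that $\psi(z,\tau)=\log\bigl((1+|\tau z|^2)/(1+|z|^2)\bigr)+u(\tau z)-u(z)$ is smooth on $\mathbb P^1\times\overline{\mathbb D}^\times$ and satisfies $m^*\omega=\pi_X^*\omega+dd^c\psi$ globally. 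With that supplied, your argument is complete. One last item, inherited from the paper rather than introduced by you: Theorem \ref{thm:holdiscs}(3) has $z'\in X\setminus\Omega_1$, whereas case (3) of the corollary as printed reads $z'\in\Omega_1$; your transport of the constant discs of course yields the former, and the corollary's condition is a typo.
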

\begin{proof}
This is immediate from Theorem \ref{thm:holdiscs}, since Lemma \ref{lem:twist} implies that the graph of $f$ is a harmonic disc for $\tilde{\Phi}$ if and only if the graph of $g(\tau) = \tau^{-1} f(\tau)$ is a harmonic disc for $\Phi$.
\end{proof}

\begin{example}
The above may be used to produce examples of boundary conditions for the HMAE over the (punctured) disc for which the weak solution to the HMAE is regular.  For suppose $X=\mathbb P^1$ with coordinate $z\subset \mathbb C\subset \mathbb P^1$ and $\Omega_t$ for $t\in (0,1)$ is a smoothly varying family of simply connected domains with the property that $\Omega_t$ is a symmetric disc around $z=0$ with area equal to $t$ (taken with respect to the Fubini-Study form $\omega_{FS}$) for $t<\epsilon$ and $t>1-\epsilon$.    We will see in \S \ref{sec:slits} that $\{\Omega_t\}_{t\in (0,1)}$ is the weak Hele-Shaw flow with respect to some K\"ahler form $\omega_{FS} +dd^c\phi$ where $\phi\in \mathcal K(X,\omega_{FS})$.  Thus Theorem \ref{thm:holdiscs}, the weak solution to the HMAE with boundary data determined by $\phi$ will be regular (the reader will find essentially the same example in \cite{Donaldson}).
\end{example}

We next discuss an interesting link between the Riemann map, the Hele-Shaw flow and the family of forms coming from the solution $\tilde{\Phi}$ to the HMAE.  
Continue to assume $X=\mathbb P^1$, $z_0$ is the origin in the chart $\mathbb C_z\subset \mathbb P^1$, and for each $\tau\in \overline{\mathbb D}^\times$ set
$$\omega_{\tau} : = \omega + dd^c \tilde{\Phi}(\cdot, \tau).$$
Then $\omega_1 = \omega$, but in general $\omega_{\tau}$ is a semipositive $(1,1)$-current on $X$ (not necessarily smooth).  One can define the weak Hele-Shaw flow with respect to such $\omega_{\tau}$ in precisely the same way as the smooth case, and we denote the associated Hele-Shaw domains by $\Omega_{t}^{\omega_\tau}$.   For $r>0$ set $\mathbb D_r = \{ z\in \mathbb C : |z|<r\}$.

\begin{proposition}\label{prop:innerdisc}
Suppose $t$ is such that $\Omega_t^\omega \subset \mathbb C_z\subset \mathbb P^1$ is proper and simply connected and let $f_t:\mathbb D\to \Omega_t^\omega$ be a Riemann-map with $f(0)=0$.  Then for each $\tau\in \mathbb D^\times$ 
$$ \Omega_t^{\omega_{\tau}}=f_t(\mathbb D_{|\tau|}).$$
\end{proposition}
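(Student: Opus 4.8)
The plan is to exhibit the Hele-Shaw flow for the (semipositive, non-smooth) background $\omega_\tau$ as a holomorphic reparametrization of the flow for $\omega$, and then to read off the domain $\Omega_t^{\omega_\tau}$ from the harmonic-disc classification of Theorem \ref{thm:holdiscs}. First note that $\omega_\tau=\omega+dd^c\tilde\Phi(\cdot,\tau)\ge 0$ satisfies $\int_X\omega_\tau=1$, and that for $\tau\neq0$ it has bounded local potentials: by Theorem \ref{thm:c11} the function $\tilde\Phi$ is $\mathcal C^{1,1}$ on $\mathbb P^1\times\overline{\mathbb D}^\times$, so $\tilde\Phi(\cdot,\tau)$ is $\mathcal C^{1,1}$ and $\omega_\tau$ has bounded density. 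Hence the weak Hele-Shaw theory applies with background $\omega_\tau$; write $\tilde\Phi^{(\tau)}$, $\psi^{\omega_\tau}_t$, $\Omega^{\omega_\tau}_t$ and $H^{(\tau)}$ for the objects of Proposition \ref{prop:weakhmaetildephi}, Corollary \ref{cor:indofphi} and Definition \ref{def:hamiltonian} associated to $\omega_\tau$.

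The key reduction I would establish is the reparametrization formula
$$\tilde\Phi^{(\tau)}(z,\sigma)=\tilde\Phi(z,\tau\sigma)-\tilde\Phi(z,\tau)\quad\text{for }(z,\sigma)\in\mathbb P^1\times\overline{\mathbb D}.$$
Writing $g_\tau(z,\sigma)=(z,\tau\sigma)$ one has $\pi_X\circ g_\tau=\pi_X$ and $dd^c_z\tilde\Phi(\cdot,\tau)=\omega_\tau-\omega$, so a direct computation gives $\pi_X^*\omega_\tau+dd^c[\tilde\Phi(z,\tau\sigma)-\tilde\Phi(z,\tau)]=g_\tau^*(\pi_X^*\omega+dd^c\tilde\Phi)\ge0$ with vanishing square away from $(z_0,0)$. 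Combined with $\tilde\Phi(z,\tau\sigma)\le\tilde\Phi(z,\tau)$ (monotonicity in $|\sigma|$, from \eqref{eq:hmaelelong4} and the maximum principle on slices) and the Lelong estimate coming from $|\tau\sigma|\le|\sigma|$, this shows the right-hand side is a candidate for the envelope $\tilde\Phi^{(\tau)}$, giving ``$\ge$''. For ``$\le$'' I would take any candidate $\Psi$ for $\tilde\Phi^{(\tau)}$, set $\hat\Psi(z,w):=\Psi(z,w/\tau)+\tilde\Phi(z,\tau)$ on $\{|w|\le|\tau|\}$, and glue it with $\tilde\Phi$ on $\{|w|\ge|\tau|\}$ via a maximum; the boundary match on $\{|w|=|\tau|\}$ uses $\Psi\le0$ there, and a short calculation shows the glued function is a candidate for $\tilde\Phi$. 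Comparing with $\tilde\Phi$ then forces $\Psi(z,\sigma)\le\tilde\Phi(z,\tau\sigma)-\tilde\Phi(z,\tau)$, proving the formula.

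With the formula in hand, set $h(\sigma):=f_t(\tau\sigma)$, a nonconstant holomorphic map $\mathbb D\to f_t(\mathbb D_{|\tau|})$ with $h(0)=z_0$. I would verify that $h$ is a proper harmonic disc for $\tilde\Phi^{(\tau)}$ by a direct differentiation on $\mathbb D^\times$: since the graph of $f_t$ is a harmonic disc for $\tilde\Phi$ we have $dd^c_{\sigma'}[\tilde\Phi(f_t(\sigma'),\sigma')]=-f_t^*\omega$, so writing $m_\tau(\sigma)=\tau\sigma$,
$$dd^c_\sigma\big[\tilde\Phi^{(\tau)}(h(\sigma),\sigma)\big]=dd^c_\sigma\big[\tilde\Phi(f_t(\tau\sigma),\tau\sigma)\big]-dd^c_\sigma\big[\tilde\Phi(h(\sigma),\tau)\big]=-h^*\omega-h^*(\omega_\tau-\omega)=-h^*\omega_\tau,$$
which is exactly the condition that $\tilde\Phi^{(\tau)}(h(\sigma),\sigma)$ be $h^*\omega_\tau$-harmonic. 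Applying Theorem \ref{thm:holdiscs} with background $\omega_\tau$: as $h$ is nonconstant with $h(0)=z_0$ it must be of type (2), so some $\Omega^{\omega_\tau}_{t'}$ is simply connected and $h$ is a Riemann map onto it, whence $\Omega^{\omega_\tau}_{t'}=h(\mathbb D)=f_t(\mathbb D_{|\tau|})$. Finally, to identify $t'=t$ I would use the last clause of Theorem \ref{thm:holdiscs}, namely $H^{(\tau)}(h(\sigma),\sigma)\equiv t'-1$: the reparametrization formula yields $H^{(\tau)}(z,\sigma)=H(z,\tau\sigma)$, so $H^{(\tau)}(h(\sigma),\sigma)=H(f_t(\tau\sigma),\tau\sigma)=t-1$ by the corresponding clause for $\tilde\Phi$. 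Hence $t'=t$ and $\Omega^{\omega_\tau}_t=f_t(\mathbb D_{|\tau|})$.

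The two places that require care, and which I regard as the main obstacles, are: (i) the reverse inequality in the reparametrization formula, where the gluing across $\{|w|=|\tau|\}$ and the bookkeeping of the Lelong number at $(z_0,0)$ must be done carefully; and (ii) the legitimacy of invoking Theorem \ref{thm:holdiscs} (and the supporting results such as Corollary \ref{cor:indofphi}) for the merely semipositive background $\omega_\tau$ rather than a genuine K\"ahler form. For $X=\mathbb P^1$ the second point is resolved by Theorem \ref{thm:c11}, which makes $\tilde\Phi^{(\tau)}=g_\tau^*\tilde\Phi-\tilde\Phi(\cdot,\tau)$ itself $\mathcal C^{1,1}$, so the $\mathcal C^{1,1}$ regularity underlying those results persists and the harmonic-disc classification goes through unchanged.
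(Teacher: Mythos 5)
Your reparametrization formula $\tilde\Phi^{(\tau)}(z,\sigma)=\tilde\Phi(z,\tau\sigma)-\tilde\Phi(z,\tau)$ is correct, and it is in substance the same observation the paper's proof starts from (phrased there as: the restriction of $\tilde\Phi$ to $\mathbb P^1\times\overline{\mathbb D}_{|\tau|}$ solves the Dirichlet problem for the HMAE with its own boundary values and the Lelong condition at $(z_0,0)$); likewise your computation that $h(\sigma)=f_t(\tau\sigma)$ is a proper harmonic disc for $\tilde\Phi^{(\tau)}$, and the identity $H^{(\tau)}(z,\sigma)=H(z,\tau\sigma)$, are sound. The genuine gap is the next step: you invoke Theorem \ref{thm:holdiscs} (and, already in your setup, Corollary \ref{cor:indofphi}) with background form $\omega_\tau$. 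These results are stated and proved in the paper only for a K\"ahler form, i.e.\ smooth and strictly positive, and $\omega_\tau$ is neither: it is a semipositive current with bounded density which can vanish identically on a nonempty open set --- indeed \S\ref{sec:ex3} exhibits exactly this ($\omega_\tau$ gives zero mass to the complement of $f(\mathbb D_{|\tau|})$, which has interior). Strict positivity is not a technicality in the proof of Theorem \ref{thm:holdiscs}: its key step reads off from $\omega_{\psi_{t_0}}=(1-\chi_{\Omega_{t_0}})\omega+t_0\delta_{z_0}$ \eqref{eq:laplaceenvelope2} that wherever $\psi_{t_0}$ is $\omega$-harmonic the point must lie in $\Omega_{t_0}$, and this implication is simply false at points where the background form vanishes. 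Moreover Corollary \ref{cor:indofphi} itself rests on the obstacle-problem regularity of Theorem \ref{lemma:basicHS}, again established only for smooth K\"ahler $\omega$; when the paper later needs even the weaker mass identity $\int_{\Omega_t^{\omega_\tau}}\omega_\tau=t$ for these degenerate forms, it has to import it from \cite{BermanDemailly}. Your proposed fix --- that Theorem \ref{thm:c11} makes $\tilde\Phi^{(\tau)}$ itself $\mathcal{C}^{1,1}$ --- concerns the wrong object: what the classification proof needs is regularity of the Hele-Shaw envelopes $\psi_s^{\omega_\tau}$ and the structure of $\omega_\tau+dd^c\psi_s^{\omega_\tau}$, not regularity of the HMAE solution, so the claim that the harmonic-disc classification ``goes through unchanged'' is unjustified.

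The paper's proof is arranged precisely so as never to apply Theorem \ref{thm:holdiscs} to $\omega_\tau$. It applies that theorem only to the smooth K\"ahler form $\omega$, obtaining that $\tilde\Phi$ is harmonic along the graph of $f_t$ with $H(f_t(\sigma),\sigma)\equiv t-1$; for the degenerate background $\omega_\tau$ it then uses only Proposition \ref{prop:Hu_t}, whose proof rests on the Duality Theorem alone and, as the paper explicitly remarks, requires neither smoothness nor strict positivity of the background form. Via your identity $H^{(\tau)}(z,1)=H(z,\tau)$ this gives $\sup\{s: z\notin\Omega_s^{\omega_\tau}\}=H(z,\tau)+1$, and by $S^1$-invariance $H(\cdot,|\tau|)$ equals $t-1$ on the Jordan curve $f_t(\partial\mathbb D_{|\tau|})$; since $\Omega_t^{\omega_\tau}$ is open, connected and contains $z_0$ (Corollary \ref{cor:open}, whose proof is also soft enough to survive degeneration of the form), it must be the interior component of that curve, namely $f_t(\mathbb D_{|\tau|})$. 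So the repair is to keep your argument up to and including the identity for $H^{(\tau)}$, and to replace the appeal to the classification theorem for $\omega_\tau$ by this duality-plus-topology argument.
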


\begin{proof}
Fix $\sigma\in \mathbb D^{\times}$ and set $r: = |\sigma|$, so our aim is to show $ f_t(\mathbb D_r) = \Omega_t^{\omega_\sigma}$.   As $\tilde{\Phi}$ is invariant under $(z,\tau)\mapsto (z,e^{i\theta} \tau)$ \eqref{eq:hmaelelong4} we may as well assume $\sigma$ is real, so $\omega_{\sigma} = \omega_r$. 

For a function $F$ on $\mathbb P^1\times \overline{\mathbb D}$ and $D\subset \overline{\mathbb D}$ we write $F|_{D}$ for the restriction of $F$ to $\mathbb P^1\times D$. Then $\tilde{\Phi}|_{\overline{\mathbb D}_r}$ is the solution to the Dirichlet problem for the HMAE with boundary data $\tilde{\Phi}(\cdot,\tau)_{\tau\in \partial \mathbb D_r}$ and the requirement that $\tilde{\Phi}|_{\overline{\mathbb D}_r}$ has Lelong number 1 at the point $(z_0,0)\in \mathbb C_z\times \mathbb D_r\subset \mathbb P^1\times \mathbb D_r$.

Letting $s:=-\ln |\tau|^2$ consider again
 $$H(z,\tau):=\frac{\partial}{\partial s^+} \tilde{\Phi}(z,e^{-s/2})\text{ for }(z,\tau)\in \mathbb P^1\times \overline{\mathbb D}^\times$$ 
 which is well-defined and Lipschitz (Theorem \ref{thm:c11}).    Clearly this is compatible with restriction, i.e.\
 $$ H|_{\overline{\mathbb D}^\times_r}(z,\tau) = \frac{\partial}{\partial s^+} \tilde{\Phi}|_{\overline{\mathbb D}_r}(z,e^{-s/2}).$$

By Theorem \ref{thm:holdiscs},  $\tilde{\Phi}$ is $\pi_X^*\omega$-harmonic along the graph of $f$ and $ H(f(\tau),\tau) = t-1$.   Now $H$ is also $S^1$-invariant and so this in particular implies
$$ H(f(re^{i\theta}),r)  = H(f(re^{i\theta}), re^{i\theta}) = t-1 \text{ for all } \theta\in \mathbb R.$$
In other words the function $H(\cdot, r)$
takes the value $t-1$ on the boundary of $f(\mathbb D_r)$.    On the other hand Proposition \ref{prop:Hu_t} implies
$$ H(z,r) + 1 = \sup\{ s: z\notin \Omega_s^{\omega_r}\}$$
(we remark the proof of Proposition \ref{prop:Hu_t}  does not require smoothness or  strict positivity assumptions of $\omega_r$).  Thus $\Omega_t^{\omega_r}$ is the interior component 
of the curve $\theta\mapsto f(re^{i\theta})$ (that is, the component containing the point $z=0$), which gives $\Omega_t^{\omega_r} = f(\mathbb D_r)$ as claimed.
\end{proof}

\section{The Strong Hele-Shaw Flow}\label{sec:strong}

We turn next to the strong Hele-Shaw flow.   Although it is certainly possible to consider this on a general Riemann surface, for ease of exposition we shall consider only the case of the complex plane.  We will, however,  take the flow with respect an arbitrary area form, which generalises the classical case in which the plane is usually equipped with the standard Euclidean structure.

\subsection{Definitions}
Let $0<a<b<\infty$ and suppose $\{\Omega_t\}_{t\in (a,b)}$ is a family of \emph{smoothly} bounded domains in $\mathbb C$.    By this we mean given any $t_0\in (a,b)$ and any point $p\in \partial \Omega_{t_0}$ there exists real coordinates $x,y$ on an open set $U\subset \mathbb C$ containing $p$ such that 
$$\partial \Omega_{t_0} \cap U = \{ (x,y) : y = g_{t_0}(x) \}$$
for some smooth function $g_{t_0}$.  We also assume this family is \emph{smooth}, by which we mean one can pick $U$ so that $g_t$ is smooth in $t$ for $t$ close to $t_0$.  As a last assumption we assume also $\Omega_t$ is increasing, so $\Omega_t\subset \Omega_{t'}$ for $t<t'$.

So if $n$ denotes the outward unit normal vector field $n$ on $\partial \Omega_{t_0}$ then for $t$ close to $t_0$ we can write $\partial \Omega_t = \{ x + f(x,t) n_x : x\in \partial \Omega_{t_0}\}$ for some smooth function $f_t(x) = f(x,t)$ on $\partial \Omega_{t_0}$ that is positive for $t>t_0$ and negative for $t<t_0$.     The \emph{normal velocity} of $\partial \Omega_{t_0}$ is defined to be
$$ V_{t_0} := \frac{df_t}{dt}\big\vert_{t=0} n.$$

We will take the origin $0$ as our distinguished point, and assume  $0\in\Omega_t$ for all $t$.    For each $t$ let $$p_t(z):=-G_{\Omega_t}(z)$$ where $G_{\Omega_t}$ denotes the Green's function for $\Omega_t$ with logarithmic singularity at the origin. Thus 
$$ p_t = 0 \text{ on } \partial \Omega_t \text{ and } \Delta p_t =-\delta_0.$$
The statement that $p_t$ exists and is smooth on $\overline{\Omega}_t\setminus \{0\}$ is classical.   We also fix a smooth area form on $\mathbb C$ which we write as 
$$\frac{1}{\kappa} dA$$
where $dA=dx\wedge dy$ is the standard Lebesgue measure and $\kappa$ is a strictly positive real-valued smooth function on $\mathbb C$.  

\begin{definition}(Strong Hele-Shaw flow)
We say $\{\Omega_t\}_{t\in (a,b)}$ is the \emph{strong Hele-Shaw flow} if
\begin{equation}\label{eq:HSclassical:again} 
 V_t = -\kappa \nabla p_t \text{ on } \partial \Omega_t \text{ for } t\in (a,b)
\end{equation}
where $V_t$ is the normal velocity of $\partial \Omega_t$.   When necessary to emphasise the dependence on the area form we refer to this as the strong Hele-Shaw flow with respect to the area form $\frac{1}{\kappa}dA$ (or with respect to $\kappa$).
\end{definition}

The above has the following physical interpretation.  Consider two parallel plates infinite in all directions separated by a small gap.  Suppose between these two plates is some porous medium with varying permeability, and a fluid is injected into the gap through a fixed point in one of the plates at a constant rate.      As the gap between the plates is small, this is essentially a two-dimensional flow that is modelled by the region $\Omega_t$ that the fluid occupies at time $t$.   We may as well assume the fluid is injected at the origin.  Then the permeability of the medium is encoded by a function $\kappa:\mathbb C\to \mathbb R_+$, so the fluid moves more freely in the areas of the plane in which $\kappa$ is relatively big.   The function $p_t$ models the pressure of the system, and we make some physical assumptions, namely the fluid is incompressible (meaning $p_t$ is harmonic away from the origin) and the medium itself does not exert any pressure on the system (meaning that $p_t$ is constant on the boundary, so after subtracting a constant we may as well take to be zero).    The equation of motion \eqref{eq:HSclassical:again} for the strong Hele-Shaw flow is then a case of Darcy's law which describes the flow of a fluid through a porous medium.
 
\subsection{Strong implies weak}
Our next goal is to prove the strong Hele-Shaw flow is also a weak one.    To do so, we start with a famous calculation due to Richardson \cite{Richardson}.

\begin{lemma}\label{lem:richardson}
Suppose $\{\Omega_t\}_{t\in (a,b)}$ is a  strictly increasing smooth family of simply connected domains in $\mathbb C$ containing the origin that satisfies
\begin{equation}\label{eq:HSclassical:repeat} 
 V_t = -\kappa \nabla p_t \text{ on } \partial \Omega_t 
\end{equation}
as in \eqref{eq:HSclassical:again}.    Then for any integrable subharmonic function $h$ on $\Omega_t,$ and $t_0<t$ 
$$  \int_{\Omega_t\setminus \Omega_{t_0}} h \frac{dA}{\kappa}  \ge (t-t_0)  h(0).$$ 
\end{lemma}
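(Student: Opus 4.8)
The plan is to study the ``moment'' quantity
\[
 I(s) := \int_{\Omega_s} h\,\frac{dA}{\kappa}
\]
as a function of $s$, to show that $I'(s)\ge h(0)$, and then to integrate. Since $h$ is integrable and $\kappa$ is smooth and strictly positive on the bounded set $\Omega_t$, each $I(s)$ is finite, and because the family is increasing we have $I(t)-I(t_0)=\int_{\Omega_t\setminus\Omega_{t_0}}h\,dA/\kappa$, which is exactly the left-hand side of the claimed inequality.

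First I would differentiate $I$ using the transport theorem for the smoothly moving domain $\Omega_s$: writing $n$ for the outward unit normal on $\partial\Omega_s$ and $V_s\cdot n$ for the scalar normal velocity, one gets
\[
 I'(s)=\int_{\partial\Omega_s}\frac{h}{\kappa}\,(V_s\cdot n)\,d\sigma.
\]
Substituting the defining equation of the strong flow \eqref{eq:HSclassical:repeat}, namely $V_s\cdot n=-\kappa\,\partial_n p_s$, the factors of $\kappa$ cancel, giving $I'(s)=-\int_{\partial\Omega_s}h\,\partial_n p_s\,d\sigma$. The key step is then to evaluate this boundary integral by Green's second identity applied to the pair $(h,p_s)$ on $\Omega_s$, with the singularity at the origin excised. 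Working on $\Omega_s\setminus\overline{B_\epsilon(0)}$, where $p_s$ is harmonic, and using $p_s=0$ on $\partial\Omega_s$ to kill the term $p_s\partial_n h$ on the outer boundary, I would let $\epsilon\to 0$. The small circle $\partial B_\epsilon$ contributes precisely $h(0)$ in the limit, since $p_s=-\tfrac{1}{2\pi}\log|z|+O(1)$ near the origin (equivalently $\Delta p_s=-\delta_0$) and the circle averages of $h$ converge to $h(0)$, while the complementary term $\int_{\partial B_\epsilon}p_s\,\partial_n h\,d\sigma$ vanishes. The outcome is
\[
 I'(s)=h(0)+\int_{\Omega_s}p_s\,\Delta h\,dA.
\]
Now $p_s\ge 0$ on $\Omega_s$ by the maximum principle (it is harmonic away from the origin, tends to $+\infty$ there, and vanishes on $\partial\Omega_s$), and $\Delta h\ge 0$ since $h$ is subharmonic; hence the last integral is nonnegative and $I'(s)\ge h(0)$. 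Integrating over $s\in[t_0,t]$ and recalling $I(t)-I(t_0)=\int_{\Omega_t\setminus\Omega_{t_0}}h\,dA/\kappa$ then yields the inequality.

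The main obstacle is analytic rather than conceptual: justifying the transport-theorem differentiation and making Green's identity rigorous when $h$ is merely integrable and subharmonic, so that $\Delta h$ is only a positive measure and $\partial_n h$ need not exist in the classical sense. I would handle this by mollifying, replacing $h$ by the smooth subharmonic approximants $h_\epsilon=h*\rho_\epsilon$ (with $\rho_\epsilon$ a radial mollifier), which decrease to $h$; I would run the smooth computation for each $h_\epsilon$ and then pass to the limit. The pairing $\int_{\Omega_s}p_s\,\Delta h_\epsilon\,dA\ge 0$ survives because $p_s\ge 0$, the values $h_\epsilon(0)$ converge to $h(0)$ by the sub-mean-value property, and the integrals $\int_{\Omega_s}h_\epsilon\,dA/\kappa$ converge by monotone convergence. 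Extra care is needed near the origin, where both $p_s$ and possibly the mass of $\Delta h$ concentrate, but since both are nonnegative the direction of the inequality is preserved throughout.
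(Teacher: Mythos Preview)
Your proposal is correct and follows exactly the same route as the paper's proof: differentiate the moment $I(s)=\int_{\Omega_s}h\,dA/\kappa$ via the (Reynolds) transport theorem, substitute $V_s=-\kappa\nabla p_s$, apply Green's identity using $p_s|_{\partial\Omega_s}=0$ and $\Delta p_s=-\delta_0$, and conclude $I'(s)\ge h(0)$ from $p_s\ge 0$, $\Delta h\ge 0$. The paper presents this ``famous calculation due to Richardson'' in a single display without spelling out the excision of the singularity or the mollification of $h$, whereas you have added those analytic details; but the argument is the same.
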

\begin{proof}
We compute using the Reynolds transport theorem,
\begin{align}\label{eq:richardson}
  \frac{d}{dt} \int_{\Omega_t}h \frac{1}{\kappa} dA &= \int_{\partial \Omega_t} h \frac{V_t}{\kappa} ds = -\int_{\partial \Omega_t} h \frac{\partial p_t}{\partial n} ds \\
&= \int_{\Omega_t} \left(p_t \Delta h - h \Delta p_t\right) dA - \int_{\partial \Omega_t} p_t \frac{\partial h}{\partial n} ds\ge h(0)\nonumber
\end{align}
since $\Delta h\ge 0$ and $p_t=0$ on $\partial \Omega_t$ and $\Delta p_t = -\delta_0$.  
\end{proof}

\begin{corollary}\label{cor:moment}
With the assumption of the above lemma, suppose  $a=0$ and $\Omega_t$ tends to $\{0\}$ as $t\to 0$ (i.e.\  given any neighbourhood $U$ of the origin $\Omega_t\subset U$ for $t$ sufficiently small).  Then for any integrable subharmonic function $h$ on $\Omega_t$, 
\begin{equation} \int_{\Omega_t} h \frac{dA}{\kappa}  \ge t  h(0)\label{eq:momentineq}\end{equation}
and  equality holds if $h$ is harmonic.     In particular 
\begin{align}
  \int_{\Omega_t} \ln |z-\zeta|^2 \frac{dA_\zeta}{\kappa(\zeta)} &= t  \ln |z|^2 \text{ for } z\notin \Omega_t, \label{eq:momentineq2}\\
  \int_{\Omega_t} \ln |z-\zeta|^2 \frac{dA_\zeta}{\kappa(\zeta)} &> t  \ln |z|^2 \text{ for } z\in \Omega_t. \label{eq:momentineq3}
  \end{align}
\end{corollary}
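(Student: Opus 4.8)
The plan is to read off all three displayed statements from the differential identity that is implicit in the proof of Lemma~\ref{lem:richardson}. That computation, via the Green's-identity line of \eqref{eq:richardson} together with $p_t=0$ on $\partial\Omega_t$ and $\Delta p_t=-\delta_0$, establishes not merely the inequality $\tfrac{d}{dt}\int_{\Omega_t} h\,dA/\kappa\ge h(0)$ but the sharper identity
\[
\frac{d}{dt}\int_{\Omega_t} h\,\frac{dA}{\kappa} = h(0) + \int_{\Omega_t} p_t\,\Delta h\,dA.
\]
The only new input over the lemma is that I retain the term $\int_{\Omega_t} p_t\,\Delta h\,dA$ rather than discarding it as nonnegative. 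So the first step is to record this identity explicitly.

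For the moment inequality \eqref{eq:momentineq} I would apply Lemma~\ref{lem:richardson} on the interval $(t_0,t)$ and let $t_0\to 0^+$. Since $\Omega_{t_0}$ shrinks to $\{0\}$ its Lebesgue measure tends to $0$, so by absolute continuity of the integral $\int_{\Omega_{t_0}} h\,dA/\kappa\to 0$ for integrable $h$; hence $\int_{\Omega_t\setminus\Omega_{t_0}} h\,dA/\kappa\to\int_{\Omega_t} h\,dA/\kappa$ while $(t-t_0)h(0)\to t\,h(0)$, giving \eqref{eq:momentineq}. If moreover $h$ is harmonic then $\Delta h=0$, the extra term vanishes, and integrating the identity above from $0$ to $t$ yields equality.

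The two explicit formulas then come from specialising to $h(\zeta)=\ln|z-\zeta|^2$ for fixed $z$, for which $h(0)=\ln|z|^2$. When $z\notin\Omega_t$ this $h$ is harmonic on $\Omega_t$, so the equality case immediately gives \eqref{eq:momentineq2}. When $z\in\Omega_t$ one has $\Delta_\zeta\ln|z-\zeta|^2=c\,\delta_z$ for a positive constant $c$, so the identity reads
\[
\frac{d}{dt'}\int_{\Omega_{t'}} h\,\frac{dA}{\kappa} = h(0) + c\,\chi_{\Omega_{t'}}(z)\,p_{t'}(z),
\]
and integrating from $0$ to $t$ produces $t\ln|z|^2$ plus $c\int p_{t'}(z)\,dt'$ taken over the set of times at which $z\in\Omega_{t'}$.

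The crux --- and the only real obstacle --- is the strict inequality in \eqref{eq:momentineq3}. This rests on $p_{t'}(z)>0$ for every interior point $z\in\Omega_{t'}$, which holds because $p_{t'}=-G_{\Omega_{t'}}$ is strictly positive away from $\partial\Omega_{t'}$ by the maximum principle. Since the family is increasing, $z\in\Omega_t$ forces $z\in\Omega_{t'}$ for all $t'$ in a nonempty interval ending at $t$, so the extra term $c\int p_{t'}(z)\,dt'$ is strictly positive and \eqref{eq:momentineq3} follows. The remaining technical care is routine: one must justify $\int_{\Omega_{t'}} p_{t'}\,\Delta h = c\,p_{t'}(z)$ for the distributional point mass (e.g.\ by testing the smooth $p_{t'}$ against $\Delta h$, or by a limiting argument), and check the $t_0\to0$ passage, which is harmless since the logarithmic singularity of $h$ at $\zeta=z$ sits away from the shrinking region $\Omega_{t_0}$ near the origin.
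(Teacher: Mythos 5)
Your proposal is correct and takes essentially the same route as the paper: the inequality \eqref{eq:momentineq} is obtained by letting $t_0\to 0$ in Lemma \ref{lem:richardson}, and the strict inequality \eqref{eq:momentineq3} comes from rerunning Richardson's calculation \eqref{eq:richardson} with $h(\zeta)=\ln|z-\zeta|^2$ and retaining the extra term $\int_{\Omega_{t'}} p_{t'}\,\Delta h = 2p_{t'}(z)>0$, which is exactly what the paper does. The only cosmetic difference is in the harmonic equality case, where you set $\Delta h=0$ in the identity while the paper applies \eqref{eq:momentineq} to both $h$ and $-h$; these are interchangeable.
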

\begin{proof}
Taking the limit as $t_0\to 0$ in the above Lemma gives  \eqref{eq:momentineq}  The statement about harmonic functions follows as if $h$ is harmonic then $h$ and $-h$ are subharmonic.   Equation \eqref{eq:momentineq2} follows as if $z\notin \Omega_t$ then $h(\zeta):=\ln |z-\zeta|^2$ is harmonic for $\zeta \in \Omega_t$.  If $z\in \Omega_t$ then $\Delta \ln |z-\zeta|^2 = 2 \delta_{z}$, so in Richardson's calculation \eqref{eq:richardson}
$$ \frac{d}{dt} \int_{\Omega_{t}}h \frac{1}{\kappa} dA \ge \int_{\Omega_{t}} 2 p_{t} \delta_{z} + h(0) > h(0)$$
from which one deduces the strict inequality in \eqref{eq:momentineq3}.
\end{proof}

\begin{proposition}[Gustafsson]\label{prop:gust}
Suppose  $\{\Omega_t\}_{t\in (0,b)}$ is a smooth family of strictly increasing simply connected domains that is the strong Hele-Shaw flow with respect to $\kappa$, and assume $\{\Omega_t\}_{t\in (0,b)}$ tends to $\{0\}$ as $t\to 0$.  Then the weak Hele-Shaw envelope with respect to the K\"ahler form
 $$\omega : = \frac{1}{\kappa} dA$$ is given by
$$ {\psi_t}(z) = - \int_{\Omega_t}  \log |z-\zeta|^2 \frac{dA_\zeta}{\kappa(\zeta)} + t \ln |z|^2,$$
and
$\{\Omega_t\}_{t\in (0,b)}$ is the weak Hele-Shaw flow with respect to $\omega$.
\end{proposition}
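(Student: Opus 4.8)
The plan is to introduce the candidate function
$$ \tilde{\psi}_t(z) := -\int_{\Omega_t} \ln|z-\zeta|^2\,\frac{dA_\zeta}{\kappa(\zeta)} + t\ln|z|^2, $$
which is exactly the claimed formula for $\psi_t$, and to prove $\psi_t = \tilde{\psi}_t$; once this is done both assertions follow at once, since the weak Hele-Shaw domain is by definition $\{\psi_t<0\}$, and the moment identities of Corollary \ref{cor:moment} show precisely where $\tilde{\psi}_t$ vanishes. Indeed, writing $U(z):=\int_{\Omega_t}\ln|z-\zeta|^2\,dA_\zeta/\kappa(\zeta)$ for the logarithmic potential of the measure $\chi_{\Omega_t}\omega$, equation \eqref{eq:momentineq2} gives $U(z)=t\ln|z|^2$, hence $\tilde{\psi}_t(z)=0$, for $z\notin\Omega_t$, while \eqref{eq:momentineq3} gives $\tilde{\psi}_t(z)<0$ for $z\in\Omega_t$. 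In particular $\tilde{\psi}_t\le 0$ everywhere and $\{\tilde{\psi}_t<0\}=\Omega_t$.

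First I would check that $\tilde{\psi}_t$ is an admissible competitor in the envelope defining $\psi_t$, which yields the inequality $\tilde{\psi}_t\le\psi_t$. Since $U$ is the logarithmic potential of the positive measure $\chi_{\Omega_t}\omega$, differentiating under the integral sign using $dd^c_z\ln|z-\zeta|^2=\delta_\zeta$ gives $dd^c U=\chi_{\Omega_t}\omega$, and therefore
$$ \omega_{\tilde{\psi}_t}=\omega-dd^c U+t\,dd^c\ln|z|^2=(1-\chi_{\Omega_t})\omega+t\delta_0\ge 0, $$
so $\tilde{\psi}_t\in\Sh(X,\omega)$; note this is exactly the expression \eqref{eq:laplaceenvelope2} already forced by Corollary \ref{cor:indofphi}. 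Because $U$ is a continuous logarithmic potential of a bounded, compactly supported density it is bounded near the origin, so $\nu_0(\tilde{\psi}_t)=\nu_0(t\ln|z|^2)=t$. Together with $\tilde{\psi}_t\le 0$ this shows $\tilde{\psi}_t$ is a candidate for the supremum defining $\psi_t$, whence $\tilde{\psi}_t\le\psi_t$.

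For the reverse inequality I would argue by the maximum principle. Let $\psi\in\Sh(X,\omega)$ satisfy $\psi\le 0$ and $\nu_0(\psi)\ge t$. On $\Omega_t\setminus\{0\}$ we have $\omega_{\tilde{\psi}_t}=t\delta_0$, which vanishes away from the origin, so $dd^c(\psi-\tilde{\psi}_t)=\omega_\psi\ge 0$ there and $\psi-\tilde{\psi}_t$ is subharmonic on $\Omega_t\setminus\{0\}$. Since $\nu_0(\psi)\ge t$ forces $\psi\le t\ln|z|^2+O(1)$ and $U$ is bounded near $0$, the difference $\psi-\tilde{\psi}_t=\psi-t\ln|z|^2+U$ is bounded above near the origin and hence extends subharmonically across $0$ to the whole bounded domain $\Omega_t$. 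As $\tilde{\psi}_t$ is continuous with $\tilde{\psi}_t=0$ on $\partial\Omega_t$ and $\psi\le 0$, one has $\limsup(\psi-\tilde{\psi}_t)\le 0$ along $\partial\Omega_t$, so the maximum principle gives $\psi\le\tilde{\psi}_t$ on $\Omega_t$; outside $\Omega_t$ one has $\tilde{\psi}_t=0\ge\psi$ directly. Taking the supremum over all such $\psi$ yields $\psi_t\le\tilde{\psi}_t$, and combined with the previous step $\psi_t=\tilde{\psi}_t$, proving the formula. The identification of $\{\Omega_t\}$ as the weak Hele-Shaw flow is then immediate from $\{\psi_t<0\}=\{\tilde{\psi}_t<0\}=\Omega_t$.

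The genuine analytic content, namely the moment identities \eqref{eq:momentineq2} and \eqref{eq:momentineq3}, is already supplied by Richardson's computation in Corollary \ref{cor:moment}, so the remaining points are purely potential-theoretic: the identity $dd^c U=\chi_{\Omega_t}\omega$ for the logarithmic potential, the subharmonic extension of $\psi-\tilde{\psi}_t$ across the origin, and the boundary behaviour needed to run the maximum principle on $\Omega_t$. I expect the extension across the origin, where one must use the Lelong bound $\psi\le t\ln|z|^2+O(1)$ together with the boundedness of $U$ to see that $\psi-\tilde{\psi}_t$ stays bounded above, to be the only step requiring more than a routine verification.
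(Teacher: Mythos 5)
Your proof is correct and follows essentially the same route as the paper: the same candidate $\tilde{\psi}_t$, the moment identities of Corollary \ref{cor:moment} to pin down its sign, subharmonicity of the logarithmic potential to make it a competitor (hence $\tilde{\psi}_t\le\psi_t$), and the maximum principle with an extension across the origin for the reverse inequality. The only cosmetic difference is that you run the maximum principle against an arbitrary competitor $\psi$ and then take suprema, whereas the paper applies it directly to $\psi_t$ using that its Lelong number is exactly $t$; both are equally valid.
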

\begin{proof}
For the proof let
$$ \tilde{\psi_t}(z) := - \int_{\Omega_t}  \log |z-\zeta|^2 \frac{dA_\zeta}{\kappa(\zeta)} + t \ln |z|^2$$
and write $\Omega_t^w:= \{ z\in X : \psi_t(z)< 0\}$ for the weak Hele-Shaw flow with respect to $\omega$. 
So the goal is to prove $\tilde{\psi}_t = \psi_t$ and $\Omega_t^w= \Omega_t$

For large $R$ let $B_R = \{ |z|<R\}$ and set
$$ \phi(z) = \int_{B_R} \log |z-\zeta|^2 \frac{dA_\zeta}{\kappa(\zeta)} \text{ for } z\in \mathbb C.$$
Then on $B_R$, $dd^c\phi=\omega$ and 
$$\omega_{\tilde{\psi}_t} = dd^c (\phi + \tilde{\psi}_t) = dd^c \int_{B_R\setminus \Omega_t} \ln |z-\zeta|^2 \frac{dA_{\zeta}}{\kappa(\zeta)}\ge 0.$$ 
As $R$ can be arbitrarily large this implies $\tilde{\psi}_t\in \Sh(\mathbb C,\omega)$.  Clearly $\nu_0(\tilde{\psi}_t) = t$ and (\ref{eq:momentineq2},\ref{eq:momentineq3}) imply $\tilde{\psi}_t\le 0$ with equality on $\Omega_t^c$.  Thus $\tilde{\psi}_t$ is a candidate for the envelope defining the Hele-Shaw envelope, so $\tilde{\psi}_t\le \psi_t$ giving  $\Omega_t^w\subset \Omega_t$.  Now both $\psi_t$ and $\tilde{\psi}_t$ have Lelong number precisely $t$ at the origin, the maximum principle implies $\psi_t\le \tilde{\psi}_t$ over $\Omega_t$, and so $\psi_t = \tilde{\psi}_t$ everywhere, and $\Omega_t\subset \Omega_t^w$ follows from   \eqref{eq:momentineq3}.

  \end{proof}

\subsection{Weak and Smooth implies Strong}

We now show if the weak Hele-Shaw flow is smooth and smoothly varying, then it is in fact the strong Hele-Shaw flow.

\begin{lemma}\label{lem:momentimpliesstrong}
Suppose $\{\Omega_t\}_{t\in (0,t_0)}$ is a smoothly varying family of bounded increasing domains, such that for any function $h$ that is harmonic on $\overline{\Omega}_t,$  
\begin{equation}  \int_{\Omega_t} h \frac{dA}{\kappa}  = t  h(0) \label{eq:harmonic}\end{equation}
Then $\{\Omega_t\}_{t\in (0,t_0)}$ is the strong Hele-Shaw flow with respect to $\kappa$.
\end{lemma}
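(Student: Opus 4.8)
The plan is to show that the moment condition \eqref{eq:harmonic} forces the normal velocity equation \eqref{eq:HSclassical:again}. First I would differentiate the moment identity in $t$. Since $\{\Omega_t\}$ varies smoothly, the Reynolds transport theorem applies exactly as in Richardson's calculation \eqref{eq:richardson}: for any function $h$ harmonic on $\overline{\Omega}_t$,
\begin{equation}
\frac{d}{dt}\int_{\Omega_t} h\,\frac{dA}{\kappa} = \int_{\partial\Omega_t} h\,\frac{V_t}{\kappa}\,ds,
\end{equation}
where $V_t$ is the (scalar) normal velocity. But the left-hand side equals $\frac{d}{dt}\bigl(t\,h(0)\bigr)=h(0)$ by \eqref{eq:harmonic}. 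Thus I obtain the identity
\begin{equation}\label{eq:plan-key}
\int_{\partial\Omega_t} h\,\frac{V_t}{\kappa}\,ds = h(0)
\end{equation}
for every $h$ harmonic on $\overline{\Omega}_t$.

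Next I would compare this against the Green's function $p_t=-G_{\Omega_t}$, which satisfies $p_t=0$ on $\partial\Omega_t$ and $\Delta p_t=-\delta_0$. For $h$ harmonic on $\overline{\Omega}_t$, Green's second identity gives
\begin{equation}
h(0) = -\int_{\Omega_t} h\,\Delta p_t\,dA = \int_{\Omega_t}\bigl(p_t\,\Delta h - h\,\Delta p_t\bigr)\,dA = \int_{\partial\Omega_t}\Bigl(p_t\,\frac{\partial h}{\partial n} - h\,\frac{\partial p_t}{\partial n}\Bigr)\,ds = -\int_{\partial\Omega_t} h\,\frac{\partial p_t}{\partial n}\,ds,
\end{equation}
using $p_t|_{\partial\Omega_t}=0$ and $\Delta h=0$. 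Subtracting this from \eqref{eq:plan-key} yields
\begin{equation}\label{eq:plan-orthog}
\int_{\partial\Omega_t} h\,\Bigl(\frac{V_t}{\kappa} + \frac{\partial p_t}{\partial n}\Bigr)\,ds = 0
\end{equation}
for all $h$ harmonic on $\overline{\Omega}_t$. Here $V_t$ denotes the scalar normal velocity (the component of the vector velocity along $n$), and $\frac{\partial p_t}{\partial n}=\nabla p_t\cdot n$; the desired conclusion \eqref{eq:HSclassical:again} is precisely $V_t = -\kappa\,\partial p_t/\partial n$ on $\partial\Omega_t$.

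The final step is to conclude from \eqref{eq:plan-orthog} that the bracketed boundary function vanishes identically. The weight $g:=V_t/\kappa + \partial p_t/\partial n$ is smooth on $\partial\Omega_t$, and \eqref{eq:plan-orthog} says it is $L^2(\partial\Omega_t)$-orthogonal to the restriction of every function harmonic on $\overline{\Omega}_t$. Since $\Omega_t$ is a smoothly bounded simply connected (or at least nice) planar domain, the boundary values of functions harmonic on $\overline{\Omega}_t$ are dense in $L^2(\partial\Omega_t)$: indeed solving the Dirichlet problem shows that \emph{every} continuous boundary datum is realized as the trace of a harmonic function, so the traces already exhaust $C^0(\partial\Omega_t)$, which is dense in $L^2$. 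Therefore $g\equiv 0$, which is exactly \eqref{eq:HSclassical:again}. The main obstacle is this density/completeness step: one must be slightly careful that the test functions are harmonic on the \emph{closed} domain, but since $\partial\Omega_t$ is smooth one may approximate any $h$ harmonic on $\overline{\Omega}_t$ by functions harmonic on slightly larger domains, or simply invoke that solutions of the Dirichlet problem with smooth boundary data extend continuously to $\overline{\Omega}_t$ and their traces are dense, which suffices to kill the continuous weight $g$.
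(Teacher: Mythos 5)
Your proposal is correct and takes essentially the same route as the paper: differentiate the moment identity via the Reynolds transport theorem, compare against Green's identity for $p_t$ (Richardson's calculation run backwards), and conclude from the resulting orthogonality relation $\int_{\partial\Omega_t}h\left(\frac{V_t}{\kappa}+\frac{\partial p_t}{\partial n}\right)ds=0$ that the integrand's weight vanishes. The paper simply asserts this final step (``since this holds for all such harmonic functions\dots''), so your extra care about density of traces of functions harmonic on $\overline{\Omega}_t$ --- the only genuinely delicate point, since Dirichlet solutions are harmonic only in the open domain --- goes slightly beyond what the paper records, and your signs are the correct ones (the paper's displayed formulas contain a sign typo).
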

\begin{proof}
This is Richardson's calculation backwards.    Let $h$ be as in the statement.  Then using the hypothesis \eqref{eq:harmonic}
$$  h(0) = \frac{d}{dt} \int_{\Omega_t}h \frac{dA}{\kappa}  = \int_{\partial \Omega_t} h \frac{V_t}{\kappa} ds.$$
On the other hand,  just as in \eqref{eq:richardson}
$$  -\int_{\partial \Omega_t} h \frac{\partial p_t}{\partial n} ds = \int_{\Omega_t} \left(p_t \Delta h - h \Delta p_t\right) dA - \int_{\partial \Omega_t} p_t \frac{\partial h}{\partial n} ds= h(0).$$
Therefore
$$ \int_{\partial \Omega_t} h\left( \frac{V_t}{\kappa}-\frac{\partial p_t}{\partial n} \right) ds=0$$
and since this holds for all such harmonic functions we must have
$$\frac{V_t}{\kappa}=\frac{\partial p_t}{\partial n} \text{ on } \partial \Omega_t$$
making $\{\Omega_t\}_{t\in (0,t_0)}$ the strong Hele-Shaw flow
\end{proof}

\begin{corollary}
Suppose for some $t_0$ the weak Hele-Shaw domains $\{\Omega_t\}_{t\in (0,t_0)}$ taken respect to 
$$\omega: = \frac{1}{\kappa} dA$$
are bounded and smooth (i.e.\ each $\Omega_t$ is smoothly bounded and varies smoothly and each $\Omega_t$ is bounded for $t<t_0$).    Then $\{\Omega_t\}_{t\in (0,t_0)}$ is the strong Hele-Shaw flow with respect to $\kappa$.
\end{corollary}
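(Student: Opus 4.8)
The goal is to prove that the smooth weak Hele-Shaw flow is the strong flow. Looking at the tools just established, the natural strategy is to verify the hypothesis of Lemma~\ref{lem:momentimpliesstrong}, namely the moment equality $\int_{\Omega_t} h\,\frac{dA}{\kappa} = t\,h(0)$ for all $h$ harmonic on $\overline{\Omega}_t$. Once that equality is in hand, Lemma~\ref{lem:momentimpliesstrong} directly delivers the conclusion. So the entire content of the proof is to derive this moment identity from the weak flow definition.

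The key observation is that Corollary~\ref{cor:indofphi} (or its planar analogue, Proposition~\ref{prop:basicplane}(4),(5)) already encodes essentially this information. Since the $\Omega_t$ are smooth and bounded, $\psi_t$ is $\mathcal{C}^{1,1}$ and we have the distributional identity $\omega_{\psi_t} = (1-\chi_{\Omega_t})\omega + t\delta_0$, i.e. $dd^c\psi_t = -\chi_{\Omega_t}\omega + t\delta_0$. The plan is to pair this identity against a test function built from a harmonic $h$. Concretely, I would fix $h$ harmonic on $\overline{\Omega}_t$ and integrate: writing $\omega = \frac{1}{\kappa}dA$, one has
\begin{equation*}
\int_{\Omega_t} h\,\frac{dA}{\kappa} = \int_X h\,\chi_{\Omega_t}\,\omega = \int_X h\,(\omega - \omega_{\psi_t}) + t\,h(0) = t\,h(0) - \int_X h\,dd^c\psi_t,
\end{equation*}
using $\omega_{\psi_t} = \omega + dd^c\psi_t$ and $\int_X h\,\delta_0 = h(0)$. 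Thus the moment equality reduces to showing the correction term $\int_X h\,dd^c\psi_t$ vanishes; integrating by parts twice this equals $\int_X \psi_t\,dd^c h$, which should vanish because $h$ is harmonic precisely where it matters and $\psi_t$ is supported appropriately.

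The delicate point—and what I expect to be the main obstacle—is the integration by parts and the interplay of supports. The function $h$ is only assumed harmonic on $\overline{\Omega}_t$, not globally, so $dd^c h$ need not vanish off $\Omega_t$; meanwhile $\psi_t \equiv 0$ on $\Omega_t^c$, so $dd^c\psi_t$ is supported on $\overline{\Omega}_t$. One must therefore be careful to localize: since $\psi_t = 0$ on $\Omega_t^c$ and $\psi_t$ is $\mathcal{C}^{1,1}$ (so no boundary distributional mass appears on $\partial\Omega_t$, because $\partial\Omega_t$ has measure zero by the Corollary following Theorem~\ref{lemma:basicHS}), the pairing $\int_X h\,dd^c\psi_t$ can be computed over $\overline{\Omega}_t$ alone, where $h$ is genuinely harmonic, and the Green's-type identity then forces it to reduce to boundary terms that vanish since $\psi_t = 0$ and $\nabla\psi_t = 0$ on $\partial\Omega_t$. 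Justifying this integration by parts rigorously for the $\mathcal{C}^{1,1}$ envelope—controlling the singularity at the origin and handling the free boundary—is the technical heart of the argument.

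An alternative, and perhaps cleaner, route avoids the envelope entirely: by Proposition~\ref{prop:gust} one knows the weak flow arising from a \emph{strong} flow satisfies the Richardson moment identity with equality for harmonic $h$; here, however, we are given only the weak flow, so I would instead argue directly that smoothness of $\{\Omega_t\}$ lets us run Richardson's calculation (Lemma~\ref{lem:richardson}) and its converse. The cleanest packaging is: first establish the moment equality \eqref{eq:harmonic} from the $\mathcal{C}^{1,1}$ formula for $\omega_{\psi_t}$ as above, handling the harmonicity and free-boundary terms carefully, and then invoke Lemma~\ref{lem:momentimpliesstrong} verbatim. The smoothness and boundedness hypotheses on $\{\Omega_t\}$ are exactly what is needed to apply that lemma (which presupposes a smoothly varying family of bounded domains), so no further regularity work is required beyond securing \eqref{eq:harmonic}.
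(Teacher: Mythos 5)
Your proposal is correct and takes essentially the same route as the paper's proof: it derives the moment identity \eqref{eq:harmonic} from the formula $\omega_{\psi_t}=(1-\chi_{\Omega_t})\omega+t\delta_0$ of Proposition \ref{prop:basicplane}(4) together with Green's formula applied where $h$ is harmonic, and then invokes Lemma \ref{lem:momentimpliesstrong}. The only (cosmetic) difference is where the integration by parts is localized: the paper applies Green's formula on a slightly larger smooth domain containing $\overline{\Omega}_t$ on which $h$ is harmonic, so that all boundary terms sit where $\psi_t$ vanishes identically, whereas you work on $\overline{\Omega}_t$ itself using $\psi_t=0$ and $\nabla\psi_t=0$ on $\partial\Omega_t$.
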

\begin{proof}
Let $h$ be harmonic on $\overline{\Omega}_t$.    By Proposition \ref{prop:basicplane}(4),  $\omega_{\psi_t} = (1-\chi_{\Omega_t}) \omega + t\delta_0$, where $\psi_t$ is the Hele-Shaw envelope, giving
$$\int_{\Omega_t} h \frac{dA}{\kappa} = \int_{\Omega_t} h \omega = -\int_{\Omega_t} h dd^c\psi_t + th(0) =th(0)$$
where the last equality uses Greens formula applied to a smooth domain containing $\Omega_t$ on which $h$ is harmonic.  Thus the result follows from Lemma \ref{lem:momentimpliesstrong}.
\end{proof}

\subsection{Bibliographical remarks}

The weak and strong point of view for the Hele-Shaw flow is a theme in the work of Gustafsson (e.g. \cite{Gustafsson4,Gustafsson3,Gustafsson2}), and the reader interested in more is referred again to \cite{Gustafssonbook}.    Classically this flow is considered with respect to the standard area form (Lebesgue measure), with a given initial domain $\Omega_0$.    The first problem then becomes proving short time existence of the Hele-Shaw flow, a result that goes back to Kufarev--Vinogradov \cite{Vinogradov} who prove that for a simply connected initial domain with real analytic boundary the strong Hele-Shaw flow (taken with respect to the standard Lebesgue measure) exists for some interval both forwards and backwards in time.  This has then been reproved in various forms in \cite{Gustafsson3,Lin,Reissig,Tian}.


It is not really interesting to consider the case of empty initial condition in the classical case, as then the flow consists simply of concentric discs centered at the origin.  However, if one allows a general area form, then the problem of short-term existence of the Hele-Shaw flow with empty initial condition is non-trivial.    Under the assumption that the area form is analytic and hyperbolic this short term existence is due to Hedenmalm-Shimorin \cite{Hedenmalm}, and when the area form is merely smooth by the authors  \cite{RWDisc}.  That is, given an arbitrary smooth area form, there exists an $\epsilon>0$ such that the strong Hele-Shaw flow exists for $0<t<\epsilon$ and tends to $\{0\}$ as $t$ tends to $0$.  Moreover, as long as $\epsilon$ is sufficiently small, each $\Omega_t$ is smoothly bounded and simply connected.  The proof that we give, and the only one known at present, comes about through the connection between the Hele-Shaw flow and the Monge-Amp\`ere foliation.  First, using a form of Schwarz function, we interpret a simply connected Hele-Shaw domain as a holomorphic disc with boundary in a totally real submanifold (just as in Donaldson's LS-submanifolds).  This converts the short term existence problem of the Hele-Shaw flow to a problem about deforming such holomorphic discs, which is a well-known elliptic problem.  The reader is referred to \cite{RWDisc} for details.

Richardson's calculation represents an important viewpoint of the Hele-Shaw flow (see \cite{Gustafssonexponential} for a survey).     Putting $h(z) = z^k$ for $k\in \mathbb N_{\ge 1}$, equation \eqref{eq:momentineq} says that for the strong Hele-Shaw flow the ``complex moments"
$$ M_k(t) : = \int_{\Omega_t} z^k \frac{dA}{\kappa}$$
are constant with respect to $t$.   This illustrates the fundamental nature of the Hele-Shaw flow. For  assuming that $\kappa$ is analytic and simply connected domain $\Omega_{t_0}$ with analytic boundary, the set $\{M_k(t)\}$ form local coordinates for the set of analytic perturbations of $\Omega_{t_0}$ (that is, any nearby domain with analytic boundary is uniquely specified by its complex moments).   So any such flow starting at $\Omega_{t_0}$ can, in principle, be described by its change in complex moments.  Thus the Hele-Shaw flow is the simplest among all possible flows, and  with this viewpoint it is not surprising that it appears in so many parts of pure and applied mathematics.

\section{Examples}\label{sec:examples}

We work throughout with $X=\mathbb P^1$ with K\"ahler form $\omega$ normalised so $\int_{\mathbb P^1}\omega =1$.   We consider $\mathbb P^1$ covered by two copies of $\mathbb C$ with coordinates $z$ and $w=1/z$ respectively (we denote these two charts by $\mathbb C_z$ and $\mathbb C_w$) and let $z_0$ be the point $z=0$.   In each case we will deduce information about the solution $\tilde{\Phi}$ to the HMAE over the punctured disc.  The interested reader will easily be able to translate these to similar statements for the HMAE over the disc using Lemma \ref{lem:twist}.

\subsection{Flows developing self-tangency}\label{sec:slits}

\begin{definition}\label{def:tangency} 
 We say the Hele-Shaw for  \emph{develops self-tangency} at a point $p\in \mathbb C_z\subset \mathbb P^1$ if there exists a $t_0>0$ such that 
 \begin{enumerate}
 \item $\Omega_t$ is smoothly bounded, simply connected and varies smoothly for $t<t_0$ and 
 \item  $\Omega_{t_0}$ is a simply connected in $\mathbb C_z$ and $\partial \Omega_{t_0}$ is the image of a smooth locally embedded curve intersecting itself tangentially precisely at the point $p$ (see Figure \ref{fig1}).
 \end{enumerate}
\end{definition}

\begin{figure}[htb]
\centering
\scalebox{1}{\input{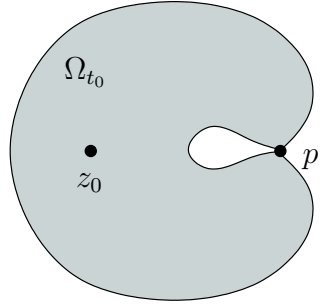}}
	\caption{The Hele-Shaw flow developing self-tangency I}
	\label{fig1}
\end{figure}

\begin{theorem}[Ross-Witt Nystr\"om]\label{thm:notc2}
Suppose the Hele-Shaw flow for $\omega$ develops self-tangency at $p$.    Then the weak solution $\tilde{\Phi}$  to the HMAE is not twice differentiable at the points $(p,\tau)$ for $|\tau|=1$. 
\end{theorem}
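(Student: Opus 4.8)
The plan is to reduce the statement to a one–variable, convex–analytic computation along the slice $\{p\}\times\overline{\mathbb D}$, and to locate the failure of the second derivative purely in the radial direction of $\tau$. First, by the $S^1$–invariance \eqref{eq:hmaelelong4} it suffices to treat the single point $(p,1)$: a failure of twice–differentiability there propagates to every $(p,e^{i\theta})$ by rotating $\tau$. Since $\tilde\Phi(\cdot,\tau)\equiv 0$ for $|\tau|=1$ and $\tilde\Phi$ is $\mathcal C^{1,1}$ (Theorem \ref{thm:c11}), the pure $z$–second derivatives at $(p,1)$ vanish, and I expect the mixed $z$–$s$ derivatives to vanish too: they are governed by $\nabla_z H(\cdot,1)=\nabla_z T$ with $T(z):=H(z,1)+1$, and the local geometry of the tangency (with $T$ approaching its value $t_0:=T(p)$ from the same side along every transverse direction) forces $\nabla T(p)=0$. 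So the plan is to show that the one remaining second derivative, in the variable $s:=-\ln|\tau|^2$, blows up as $s\to 0^+$.

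To make this precise I would use the Duality Theorem. Write $g(t):=-\psi_t(p)$; by Lemma \ref{lem:convexity} $g$ is convex and nondecreasing, and $g\equiv 0$ on $(-\infty,t_0]$ where $t_0=\sup\{t:p\notin\Omega_t\}=H(p,1)+1$ by Proposition \ref{prop:Hu_t}. Then \eqref{legendre2} gives
$$\tilde\Phi(p,e^{-s/2})=\sup_t\{-g(t)-(1-t)s\}=-s+g^*(s),$$
the one–variable Legendre transform of $g$. For small $s>0$ the supremum is attained at $t(s)>t_0$ with $g'(t(s))=s$, so $(g^*)'(s)=t(s)$ and $(g^*)''(s)=1/g''(t(s))$; as $s\to 0^+$ one has $t(s)\to t_0^+$. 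Hence $\partial_s^2\tilde\Phi(p,e^{-s/2})\to 1/g''(t_0^+)$, where I write $g''(t_0^+):=\lim_{t\downarrow t_0}g'(t)/(t-t_0)$. Thus the whole problem reduces to proving $g''(t_0^+)=0$, i.e. that $g$ vanishes \emph{faster than quadratically} as $t\downarrow t_0$.

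To see where this comes from, I would compute $g'$ by Richardson's transport identity. By Gustafsson's formula (Proposition \ref{prop:gust}) $\psi_t(p)=-\int_{\Omega_t}\ln|p-\zeta|^2\,d\mu+t\ln|p|^2$ with $d\mu=\omega$, and the moment computation of Corollary \ref{cor:moment} (applying \eqref{eq:richardson} to $h(\zeta)=\ln|p-\zeta|^2$, for which $\Delta h=2\delta_p$) gives, for $t>t_0$,
$$g'(t)=-\tfrac{d}{dt}\psi_t(p)=2\,p_t(p)=-2\,G_{\Omega_t}(p)>0,$$
where $p_t=-G_{\Omega_t}$ is the Green's potential of $\Omega_t$. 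Since $\Omega_t$ is simply connected for $t$ near $t_0$, writing $f_t:\mathbb D\to\Omega_t$ for the Riemann map with $f_t(0)=z_0$ one has $G_{\Omega_t}(p)=\ln|f_t^{-1}(p)|$, so $g'(t)\asymp 1-|f_t^{-1}(p)|$ as $t\downarrow t_0$. Therefore $g''(t_0^+)=0$ is equivalent to the super-linear rate $1-|f_t^{-1}(p)|=o(t-t_0)$. This is exactly the point at which the self-tangency enters: the Riemann map $f_{t_0}$ of the simply connected domain $\Omega_{t_0}$ (the proper harmonic disc furnished by Theorem \ref{thm:holdiscs}) carries two distinct boundary points $\zeta_1\ne\zeta_2$ to the single point $p$, with parallel boundary tangents. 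As $t\downarrow t_0$ the interior preimage $f_t^{-1}(p)$ is squeezed toward $\partial\mathbb D$ between the two limiting boundary preimages, and this tangential collision drives it to the circle faster than linearly, yielding $g''(t_0^+)=0$ and hence $\partial_s^2\tilde\Phi(p,e^{-s/2})\to+\infty$; so $\tilde\Phi$ is not twice differentiable at $(p,1)$.

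The hard part will be this last step, namely quantifying $1-|f_t^{-1}(p)|=o(t-t_0)$, which amounts to controlling the evolving Riemann maps (equivalently the Green's functions $G_{\Omega_t}(p)$) near the double point. I would attack it with Koebe/distortion estimates for $f_t$ near $\zeta_1,\zeta_2$ combined with the rate estimates for the moving boundary (Corollary \ref{cor:HSmovement}) and the volume identity \eqref{eq:volumeHSflow}, comparing with an explicit model cusp between the two tangent branches. It is worth noting as a sanity check that at an ordinary \emph{smooth} boundary point the same computation produces a \emph{linear} rate, hence $g''(t_0^+)\in(0,\infty)$ and a finite $\partial_s^2\tilde\Phi$, consistent with twice-differentiability there; this confirms that it is precisely the tangential collision of the two boundary branches that annihilates $g''(t_0^+)$ and destroys the second derivative.
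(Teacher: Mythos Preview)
Your dismissal of the mixed $z$--$s$ derivative is where the argument goes wrong, and it is precisely there that the paper's proof lives. You assert that ``$T$ approaching $t_0$ from the same side along every transverse direction forces $\nabla T(p)=0$''. That inference requires $T$ to be differentiable at $p$; but $T=H(\cdot,1)+1$ is only known to be Lipschitz (from the $\mathcal C^{1,1}$ bound on $\tilde\Phi$), and at a self-tangency it is \emph{not} differentiable. In the model the paper uses, with the two branches given locally by $y=\pm(x^2+(t_0-t))$, one computes directly $T(0,y)=t_0-|y|$ for small $y$. So $\partial_y T$ jumps from $+1$ to $-1$ across $p$, the mixed second derivative $\partial_y\partial_s\tilde\Phi(p,1)=\partial_y H(p,1)$ fails to exist, and the theorem follows immediately. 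This is the entire content of the paper's argument.

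Your alternative route through the pure $s$--second derivative is therefore unnecessary, and it is also substantially harder than you indicate. To run it you need control of $g(t)=-\psi_t(p)$ for $t>t_0$, i.e.\ \emph{after} the tangency, where the flow is no longer guaranteed to be smooth or simply connected; in particular your Riemann-map formulation $g'(t)\asymp 1-|f_t^{-1}(p)|$ and the Richardson identity both presuppose the strong flow, which is exactly what breaks down past $t_0$. Even granting smoothness, establishing the superlinear rate $1-|f_t^{-1}(p)|=o(t-t_0)$ would require genuine boundary analysis near the double point, whereas the mixed-derivative argument needs only the arrival-time interpretation of $H(\cdot,1)$ from Proposition~\ref{prop:Hu_t}. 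The moral is: don't assume the mixed derivative is tame and then chase the hard radial one --- the failure is already visible, and elementary, in the spatial variation of the arrival time $T$.
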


Rather than giving a full proof we illustrate this with an instructive example.  Say $(x,y)$ are smooth coordinates centered at $p$, and that near $p$ 
$$\Omega_t = \{ y < -x^2 - (t_0-t) \} \cup \{ y > x^2 + (t_0-t)\}\quad\text{ for } t_0<t.$$
Set 
$$h(x,y):= H((x,y),1)$$ 
where as usual
$$H(z,\tau):=\frac{\partial}{\partial s^+} \tilde{\Phi}(z,e^{-s/2}),$$
and recall by Proposition \ref{prop:Hu_t} 
$$H(z,1)+1= \sup\{ t: z\notin \Omega_t\}.$$
Thus for $|y|$ sufficiently small
$$ h(0,y) = \left \{ \begin{array}{cc} t_0-y-1& y>0 \\ t_0+y-1 & y<0\end{array}\right.$$
and from this it is clear $\frac{\partial h}{\partial y}$ does not exist at the origin, and so $\tilde{\Phi}$ is not twice differentiable at $(p,1)$.\medskip

Of course, for this idea have any use, we need to be able to ensure the Hele-Shaw can develop self-tangency.    To do so we start by showing essentially any reasonable family of simply connected domains is the Hele-Shaw flow with respect to \emph{some} smooth area form $\frac{1}{\kappa} dA$.    Assume for $t\in (a,b)$ that $\Omega_t$ is smoothly bounded, smoothly varying simply connected and strictly increasing and each contains the origin.   Take $p_t$ to be defined by
$$ p_t = 0 \text{ on } \partial \Omega_t \text{ and } \Delta p_t =-\delta_0.$$
As already  mentioned, the fact  $p_t$ exists and is smooth on $\overline{\Omega}_t\setminus \{0\}$ is classical.   What is also true is $p_t$ varies smoothly with $t$ (it seems to the authors that all the known proofs of the existence of $p_t$ actually prove this stronger statement, see for instance \cite[Appendix A]{RWApplications}).    

Then (as observed by Berndtsson) one can reverse-engineer the defining equation for the Hele-Shaw flow to define a smooth function $\kappa$ by requiring 
\begin{equation}
 V_t = -\kappa \nabla p_t \text{ on } \partial \Omega_t\text{ for } t\in (a,b).\label{eq:HSclassical}
\end{equation}
Since $\{\Omega_t\}$ is assumed to be strictly increasing,  $V_t$ is non-vanishing so $\kappa$ is a well-defined strictly positive smooth function on some subset of $\mathbb C$.  If we further assume $a=0$ and for $t$ sufficiently small $\Omega_t$ is just a disc centred at the origin with Lebesgue area $t$, then $\kappa$ is constant on $\partial \Omega_t$ for $t$ sufficiently small, and thus extends to a smooth function across the origin.  So, by construction, $\{\Omega_t\}_{t\in (0,b)}$ is the strong Hele-Shaw flow with respect to $\frac{1}{\kappa} dA$.   So far we have defined a smooth $\kappa$ on $\Omega_{b}$.  Assuming that $\kappa$ extends to a smooth function on $\overline{\Omega}_b$, we may then extend it to a smooth function on all of $\mathbb P^1$, giving an area form whose Hele-Shaw flow agrees with  $\{\Omega_t\}$ for $t<b$.

We can now sketch how to use this to produce an area form whose Hele-Shaw flow develops self-tangency (see the right hand side Figure \ref{fig2} and observe that in this figure have moved our distinguished point $z_0$ to be the point $-1$).     Fix $t_0\in (0,1)$ and let $\tilde{\Omega}_{t_0}$ be as in the figure.  We assume $\tilde{\Omega}_{t_0}$ has analytic boundary, and is symmetric under $x+iy\mapsto -x+iy$.  Let $\tilde{z_0}:=-i\in \tilde{\Omega}_{t_0}$ so $\tilde{z}_0^2= z_0=-1$.  

\begin{figure}[htb]
\centering
\scalebox{1}{\input{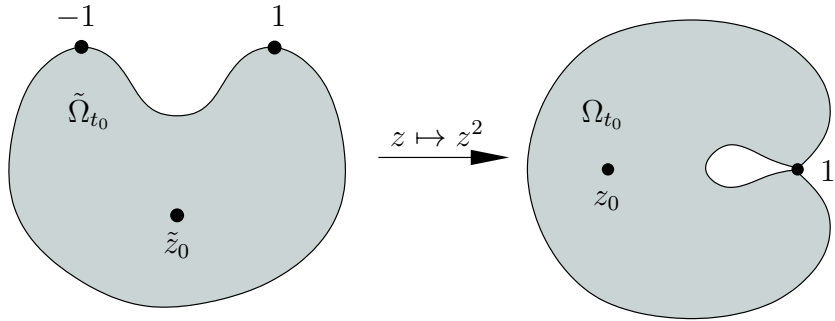}}
	\caption{The Hele-Shaw flow developing self-tangency II}
	\label{fig2}
\end{figure}

Consider the form $\tilde{\omega} :=\frac{i}{\pi} |z|^2 dz\wedge d\overline{z}$ which is real analytic and strictly  positive away from $z=0$.     Then from short time existence of the strong Hele-Shaw flow with analytic initial conditions \cite[Theorem 6.2]{Hedenmalm}, there is a $\delta>0$ such that the strong Hele-Shaw flow  with initial condition $\tilde{\Omega}_{t_0}$ with injection point $\tilde{z}_0$ taken respect to $\tilde{\omega}$ exists for $t\in [t_0-\delta,t_0 + \delta]$.   Next observe $\tilde{\omega}$ is the pullback of the standard K\"ahler form $\omega: = dd^c |z|^2$ on $\mathbb C$ by the map $f(z) = z^2$.    Set  $\Omega_t: = f(\tilde{\Omega}_t)$ for $t\in [t_0-\delta,t_0]$, so by construction $\Omega_{t_0}$ is self-tangent at the point $p=1$.  It is not hard to show that $\{\Omega_t\}_{t\in [t_0-\delta,t_0]}$ is the strong Hele-Shaw flow with respect to $\omega$.    We then complete this to a flow that tends to the point $z_0$ as $t$ tends to zero by taking $\Omega_{t_0-\delta}$ and shrinking it smoothly towards  $z_0$.  Our previous discussions show that it is possible to do so in such a way to obtain a K\"ahler form on $\mathbb P^1$ whose Hele-Shaw flow agrees with $\Omega_t$ for $t\le t_0$, and thus develops self-tangency as desired.

\subsection{Multiply-connected flows}\label{sec:multiply}

Using what has already been said, it is not hard to show there are K\"ahler forms on $\mathbb P^1$ whose corresponding Hele-Shaw flow ceases to be simply connected at some point in time.  One way to arrange this is to use the flow from the previous section that develops self-tangency at a point $p$ at time $t_0$, so for a short time after $t_0$ the domain $\Omega_t$ will not be simply connected.  Another way to produce such an example is to start with a K\"ahler form that puts almost all of its mass on a given annulus $A\subset \mathbb P^1$ containing $z_0$.  Physically this means the Hele-Shaw flow is modelling a fluid moving through a medium that has very high permeability on $A$, and low permeability outside of $A$.   Intuitively one expects that the Hele-Shaw domains will rapidly wrap around within $A$ before it has a chance to completely cover the bounded domain in the complement of $A$, thus giving a flow that at some point becomes non simply connected.  This idea can be made rigorous, and we refer the reader to \cite[Proposition 1.4]{RWHarmonicDiscs} for details.

\begin{theorem}[Ross-Witt Nystr\"om]\label{thm:foliation}
Suppose $\omega$ is a K\"ahler form on $\mathbb P^1$ and there exist two times $t_0<t_1$ such that the weak Hele-Shaw domains $\Omega_{t}$ with respect to $\omega$ is not simply connected for any $t\in (t_1,t_2)$.   Then there exists an open set $U\subset\mathbb P^1\times \overline{\mathbb D}$  intersecting $\mathbb P^1\times \partial{\mathbb D}$ non-trivial that does not meet any proper harmonic disc of $\tilde{\Phi}$.
\end{theorem}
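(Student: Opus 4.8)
The plan is to exploit the Hamiltonian $H$ from Definition \ref{def:hamiltonian} together with the complete classification of proper harmonic discs in Theorem \ref{thm:holdiscs}. The crucial observation is that $H$ is \emph{constant} along every proper harmonic disc, and the value it takes records the time at which the Hele-Shaw flow reaches the corresponding point: along the discs of type (1), (2), (3) the last statement of Theorem \ref{thm:holdiscs} gives $H\equiv -1$, $H\equiv t-1$ (for some $t$ with $\Omega_t$ simply connected), and $H\equiv 0$ respectively. Thus the union of all proper harmonic discs is contained in the locus where $H$ takes a value in $\{-1,0\}\cup\{t-1 : \Omega_t \text{ simply connected}\}$. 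My candidate open set is therefore
\[
U := \{(z,\tau)\in \mathbb P^1\times \overline{\mathbb D}^\times : t_1 - 1 < H(z,\tau) < t_2 - 1\},
\]
that is, the set of points whose ``arrival time'' lies in the forbidden window $(t_1,t_2)$.

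With this definition the disjointness from harmonic discs is essentially immediate from the classification. Since $\Omega_t$ fails to be simply connected for every $t\in(t_1,t_2)$, no simply-connected time lies in that interval, so no disc of type (2) can have its constant value $t-1$ inside $(t_1-1,t_2-1)$; and since $(t_1,t_2)\subseteq(0,1)$ (because $\Omega_t$ is empty for $t\le 0$ and all of the simply connected $\mathbb P^1$ for $t>1$, forcing $t_1\ge 0$ and $t_2\le 1$), the values $-1$ and $0$ arising from types (1) and (3) are excluded as well. Hence $U$ meets no proper harmonic disc, \emph{interior points included} — precisely because $H$ is constant along each entire disc rather than merely along its boundary.

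Next I would verify that $U$ is open and meets $\mathbb P^1\times\partial\mathbb D$. Openness is where the regularity enters: by Theorem \ref{thm:c11} the envelope $\tilde{\Phi}$ is $\mathcal C^{1,1}$ on $\mathbb P^1\times\overline{\mathbb D}^\times$, so $H$, being a first radial derivative of $\tilde{\Phi}$, is continuous there, and $U$ is a genuine open set. For the boundary, on the slice $|\tau|=1$ Proposition \ref{prop:Hu_t} identifies $H(z,1)+1$ with $\sup\{t:z\notin\Omega_t\}$, so $U\cap(\mathbb P^1\times\partial\mathbb D)$ is exactly the set of points reached strictly between $t_1$ and $t_2$. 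The volume identity \eqref{eq:volumeHSflow} gives $\int_{\Omega_{t_2}}\omega-\int_{\Omega_{t_1}}\omega=t_2-t_1>0$, and since $\partial\Omega_{t_1}$ has measure zero this region has positive $\omega$-area, hence is non-empty; this exhibits genuine boundary points of $U$.

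The one genuinely essential analytic input, and the step I expect to be the main obstacle, is the continuity of $H$: without it $U$ need not be open, and it is exactly here that the $\mathcal C^{1,1}$ regularity special to $X=\mathbb P^1$ is used. The rest is bookkeeping — confirming $(t_1,t_2)\subseteq(0,1)$ so that the endpoint values $0$ and $1$ do not slip into the window, and observing that $H$ is defined only for $\tau\neq 0$, which is harmless since the region already meets $|\tau|=1$ and one may intersect with $\{|\tau|>\epsilon\}$ to obtain an open subset of $\mathbb P^1\times\overline{\mathbb D}$ if a set open up to (and away from) $\tau=0$ is preferred.
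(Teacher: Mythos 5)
Your proof is correct, and its core is the same as the paper's: you take the identical window set $U=\{(z,\tau): t_1-1<H(z,\tau)<t_2-1,\ |\tau|>0\}$, rule out intersections with proper harmonic discs using the classification in Theorem \ref{thm:holdiscs} together with the constancy of $H$ along each disc, and obtain openness of $U$ from the continuity of $H$ furnished by the $\mathcal{C}^{1,1}$ regularity of Theorem \ref{thm:c11}. The one step where you genuinely depart from the paper is the non-emptiness of $U\cap(\mathbb{P}^1\times\partial\mathbb{D})$: the paper notes that $H(\cdot,1)$ is continuous and attains both values $-1$ and $0$ somewhere, so the intermediate value theorem on the connected surface $\mathbb{P}^1$ produces boundary points of $U$; you instead combine Proposition \ref{prop:Hu_t} with the volume identity \eqref{eq:volumeHSflow}. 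Your variant is perfectly legitimate and even quantitative --- it shows the boundary trace of $U$ has $\omega$-measure $t_2-t_1$ rather than merely being non-empty, and it does not invoke connectedness of $X$ --- but as written it needs one tightening: a point of $\Omega_{t_2}\setminus\Omega_{t_1}$ only satisfies $t_1\le H(z,1)+1\le t_2$, so to land in the \emph{open} window you should either pass to intermediate times $t_1<s_1<s_2<t_2$ and use $\Omega_{s_2}\setminus\Omega_{s_1}$ (which also keeps the identity of Corollary \ref{cor:indofphi}, stated for $t\in[0,1)$, safely applicable), or compute the measure of $\bigcup_{t<t_2}\Omega_t\setminus\bigcap_{t>t_1}\Omega_t$ by monotone limits. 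Finally, your explicit verification that $(t_1,t_2)\subseteq(0,1)$, so that the values $-1$ and $0$ carried by the constant discs cannot slip into the window, addresses a point the paper leaves implicit but which is genuinely needed for the disjointness claim.
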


\begin{proof}
Theorem \ref{thm:holdiscs} lists all the harmonic discs, and also says the function $H$ is constant on any harmonic disc of $\tilde{\Phi}$.  From this one sees that no such disc can intersect the open set $$U:=\{(z,\tau): t_1-1<H(z,\tau)<t_2-1, |\tau|>0\}.$$ Since $H(z,1)$ is continuous, and attains both values $-1$ and $0$ somewhere on $X$, it follows from continuity that $U\cap(\mathbb{P}^1\times \partial \mathbb{D})$ is non-empty.  
\end{proof}

The point of this statement is it implies the solution $\tilde{\Phi}$ to the HMAE is far away from being regular, since the existence of $U$ obstructs the possibility of a foliation of $\mathbb P^1\times \overline{\mathbb D}$ by proper harmonic discs. It is interesting to compare with the example of Gamelin and Sibony, Example \ref{ex:gamelin}. There the set of proper harmonic discs did also not foliate the whole domain (which in this case was the unit ball in $\mathbb{C}^2$) but the boundaries of those discs did foliate the boundary of the domain. In our example we see even this is not the case.

\subsection{Flows with simply connected final domains}\label{sec:ex3}

Our third example concerns Hele-Shaw flows on $\mathbb P^1$ whose final domain is biholomorphic to the disc.      Suppose $\gamma$ is a non-trivial curve in $\mathbb C_w$ through the point $w=0$ (i.e.\ the point $z=\infty$).  That is, $\gamma$ is image of a smooth function $[0,1]\to \mathbb C_w$ that does not intersect itself and passes through $w=0$.   

\begin{theorem}[Ross-Witt Nystr\"om] Suppose the final Hele-Shaw domain of $\omega$ satisfies
$$\Omega_1 = \mathbb P^1\setminus \gamma.$$
There there is an open subset $S\subset \mathbb P^1\times \mathbb D$ such that the solution $\tilde{\Phi}$ to the HMAE satisfies
$$ \pi_{\mathbb P^1} \omega + dd^c \tilde{\Phi} =0 \text{ on } S.$$
\end{theorem}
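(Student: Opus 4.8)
The plan is to produce an open set $S$ on which the solution is \emph{pulled back from the base}, i.e. $\tilde\Phi(z,\tau)=\psi_1(z)$ with no dependence on $\tau$, and then to note that for such a $\tau$-independent potential the associated form is $\pi_{\mathbb P^1}^*\omega_{\psi_1}$, which vanishes off $z_0$ on the final Hele-Shaw domain. Since $\gamma$ is a simple arc, $\Omega_1=\mathbb P^1\setminus\gamma$ is simply connected and contains $z_0$ (as $z_0=0\in\mathbb C_z$ while $\gamma\subset\mathbb C_w$ passes through $w=0$). Let $f\colon\mathbb D\to\Omega_1$ be the Riemann map with $f(0)=z_0$. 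The key input is that the graph of $f$ is a proper harmonic disc for $\tilde\Phi$ with
$$H(f(\tau),\tau)\equiv 0\qquad(\tau\in\mathbb D^\times),$$
which is exactly Theorem~\ref{thm:holdiscs}, case (2), taken at the endpoint $t=1$. Equivalently, by Lemma~\ref{lem:ham2} this reads $\tilde\Phi(f(\tau),\tau)=\psi_1(f(\tau))$, and it can be verified directly by the maximum-principle argument of that theorem: the function $\tau\mapsto\tilde\Phi(f(\tau),\tau)$ is $f^*\omega$-subharmonic and dominates the $f^*\omega$-harmonic function $\tau\mapsto\psi_1(f(\tau))$ by \eqref{legendre2}, their difference extends subharmonically across $0$ (matching logarithmic poles), tends to $0$ as $|\tau|\to1$ (using the squeeze $\ln|\tau|^2\le\tilde\Phi\le0$ and the vanishing of $\psi_1$ along the smooth slit $\gamma$), and is nonnegative, so it is identically $0$.

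Next I would propagate this single harmonic disc to a full two-dimensional region. Fix $z=f(\zeta)$. Since $\tilde\Phi$ depends only on $|\tau|$ by \eqref{eq:hmaelelong4}, so does $H$, whence $H(f(\zeta),\tau)=0$ for every $\tau$ with $|\tau|=|\zeta|$. Recalling $H(z,\tau)=\partial_s^+\tilde\Phi(z,e^{-s/2})$ with $s=-\ln|\tau|^2$, the function $s\mapsto\tilde\Phi(z,e^{-s/2})$ is convex, equals $0$ at $s=0$ by \eqref{eq:continuityboundary}, and decreases to its infimum $\psi_1(z)$ as $s\to\infty$ by \eqref{legendre2}; hence $H(z,\cdot)\in[-1,0]$ is non-decreasing in $s$, so once it reaches $0$ it stays $0$. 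Thus $H(f(\zeta),\tau)=0$ for all $0<|\tau|\le|\zeta|$. I therefore set
$$S:=\{(z,\tau)\in\mathbb P^1\times\mathbb D:\ z\in\Omega_1,\ 0<|\tau|<|f^{-1}(z)|\},$$
which is nonempty (for instance $(f(1/2),1/4)\in S$) and open, since $f^{-1}$ is continuous on $\Omega_1$; the condition $0<|\tau|<|f^{-1}(z)|$ forces $z\neq z_0$. By construction $H\equiv 0$ on $S$.

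Finally I would conclude. On $S$, Lemma~\ref{lem:ham2} with $t=1$ gives $\tilde\Phi(z,\tau)=\psi_1(z)$, independent of $\tau$, so on this open set
$$\pi_{\mathbb P^1}^*\omega+dd^c\tilde\Phi=\pi_{\mathbb P^1}^*\big(\omega+dd^c\psi_1\big)=\pi_{\mathbb P^1}^*\omega_{\psi_1}.$$
Every slice of $S$ lies in $\Omega_1\setminus\{z_0\}$, and Proposition~\ref{prop:HSlelong}(3d) (valid for $t\in[0,1]$) gives $\omega_{\psi_1}|_{\Omega_1}=\delta_{z_0}$, so $\omega_{\psi_1}=0$ on $\Omega_1\setminus\{z_0\}$; hence $\pi_{\mathbb P^1}^*\omega+dd^c\tilde\Phi=0$ on $S$, as claimed. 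I expect the only genuinely delicate point to be the endpoint case $t=1$ of the harmonic-disc classification, namely the identity $H(f(\tau),\tau)\equiv0$ for the Riemann map onto $\Omega_1$, which hinges on the boundary behaviour of the Green's function $\psi_1$ along the slit $\gamma$ (geometrically evident but requiring the arc to be regular for the Dirichlet problem). Granting that, the passage from the single disc to the open region $S$ (via $S^1$-invariance and monotonicity of $H$ in $s$) and the concluding computation are routine.
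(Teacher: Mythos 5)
Your proposal is correct, and it takes a genuinely different route from the one printed in the paper --- indeed a stronger one. Both arguments start the same way: $\Omega_1=\mathbb P^1\setminus\gamma$ is a simply connected proper subset of $\mathbb C_z$, one takes the Riemann map $f\colon\mathbb D\to\Omega_1$ with $f(0)=z_0$, and one invokes the harmonic-disc classification (Theorem \ref{thm:holdiscs}, case (2) together with its ``moreover'' clause, at the endpoint $t=1$) to get $H(f(\tau),\tau)\equiv 0$; note the paper itself uses exactly this converse direction of Theorem \ref{thm:holdiscs} inside Proposition \ref{prop:innerdisc}, so you are on equal footing there. The divergence is in what comes next. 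The paper's proof goes through Proposition \ref{prop:innerdisc} to identify $f(\mathbb D_{|\tau|})$ with the time-$1$ Hele-Shaw domain of the degenerate metric $\omega_\tau=\omega+dd^c\tilde\Phi(\cdot,\tau)$, and then counts mass, $\int_{\Omega_1^{\omega_\tau}}\omega_\tau=1=\int_{\mathbb P^1}\omega_\tau$, which requires extending Corollary \ref{cor:indofphi} to non-smooth $\omega_\tau$ via Berman--Demailly; this only yields the slice-wise statement (each current $\omega_\tau$ vanishes on a non-empty open subset of $\mathbb P^1$), and the paper explicitly defers the full statement to \cite{RossNystrommaximal}. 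You instead propagate $H\equiv0$ from the graph of $f$ into the open region $S$ beneath it, using the $S^1$-invariance \eqref{eq:hmaelelong4} together with the facts that $H=\partial_s^+\tilde\Phi(z,e^{-s/2})$ is non-decreasing in $s$ (convexity) and $\le 0$ (convexity plus $\tilde\Phi\le0$ and boundary value $0$); then Lemma \ref{lem:ham2} at $t=1$ gives $\tilde\Phi=\pi_{\mathbb P^1}^*\psi_1$ on $S$, so the full $(1,1)$-form equals $\pi_{\mathbb P^1}^*\omega_{\psi_1}$ there, which vanishes because $S$ avoids $\{z_0\}\times\mathbb D$ and $\omega_{\psi_1}|_{\Omega_1}=\delta_{z_0}$ by Proposition \ref{prop:HSlelong}(3d). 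This buys you the theorem as actually stated (vanishing of the full form on an open subset of the product, not merely of its slice restrictions), entirely within the survey's own toolkit and without any input on Hele-Shaw flows for degenerate metrics. The one caveat concerns your optional ``direct verification'' of the $t=1$ harmonic disc by the maximum principle: it hinges on $\liminf_{z\to\zeta}\psi_1(z)\ge0$ for $\zeta\in\gamma$, i.e.\ on every point of the arc being a regular boundary point of $\Omega_1$. This is a true classical fact (boundary points lying on a non-degenerate continuum are regular), but it is nowhere proved in the paper, so it is cleaner to rely on the citation of Theorem \ref{thm:holdiscs}, as you do first, which renders that extra argument unnecessary.
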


Said another way, we already know the rank of the form $\pi_{\mathbb P^1} \omega + dd^c \tilde{\Phi}$ can be at most 1, since $(\pi_{\mathbb P^1} \omega + dd^c \tilde{\Phi})^2 =0$.  Thus the above gives an open subset $S$ on which $\pi_{\mathbb P^1} \omega + dd^c \tilde{\Phi}$ fails to have maximal rank.

\begin{proof}

We shall prove the slightly weaker statement that for each $\tau\in \mathbb D^\times$ the current $\omega + dd^c\tilde{\Phi}(\cdot,\tau)$  vanishes on some non-empty open subset of $\mathbb P^1$ (and the reader is referred to \cite{RossNystrommaximal} for the proof of the full statement).     As
 $$\Omega_1 =\mathbb P^1\setminus \gamma,$$
 and $\gamma$ passes through the point $w=0$, we see $\Omega_1$ is a simply connected proper subset of $\mathbb C_z$.  Consider the Riemann map $f:\mathbb D\to \Omega_1$ with $f(0)= 0$.  Then by Proposition \ref{prop:innerdisc}
$$ A_\tau: = f(\mathbb D_{|\tau|}) = \Omega_1^{\omega_\tau} \text{ for } \tau\in \mathbb D^{\times}.$$
In particular, $A_{\tau}$ is a proper subset of $\mathbb C_{z}$ whose complement has non-empty interior if $|\tau|<1$.  

On the other hand, for all $t\in [0,1]$
$$\int_{\Omega_t^{\omega_\tau}} \omega_\tau = t.$$
(we saw this statement Corollary \ref{cor:indofphi} under the assumption that $\omega_{\tau}$ is a K\"ahler form, and this more general statement can be deduced using \cite[Remark 1.19, Corollary 2.5]{BermanDemailly}).    Therefore
$$\int_{A_\tau} \omega_\tau = \int_{\Omega_1^{\omega_\tau}}  \omega_\tau = t.$$
But our normalisation is that $\int_{\mathbb P^1} \omega_{\tau}= \int_{\mathbb P^1} \omega =1$, and so $\omega_\tau$ gives zero measure to the complement of $A_{\tau}$, which is precisely what we were aiming to prove.
\end{proof}

It is not hard to construct a specific example of a K\"ahler metric on $\mathbb P^1$ for which $\Omega_1 = \mathbb P^1\setminus \gamma$ for some such arc $\gamma$.  To do so, let $\omega_{FS}$ be the Fubini-Study form, so $\omega = \ln (1+|w|^2)$ on $\mathbb C_w$.  We claim there is a $\phi\in \mathcal{C}^{\infty}(\mathbb P^1)$ such that $\omega:=\omega_{FS} + dd^c\phi>0$ and $\phi\ge - \ln (1+|w|^2)$ with equality precisely on $\gamma$.   One can then deduce easily that $\Omega_1 = \{ z : \phi(w) > - \ln (1+|w|^2)\} = \mathbb P^1\setminus \gamma$. 

 To produce such a $\phi$, assume for simplicity that $\gamma$ is the interval $[-1,1]\subset \mathbb R\subset \mathbb C_w$ and let $\alpha:\mathbb R\to \mathbb R$ be a non-negative smooth non-decreasing convex function with $\alpha(t) =0$ for $t\le 1$ and $\alpha(t)>0$ for $t>1$.  Then
$$u(w): = \alpha(|w|^2)  + \operatorname{Im}(w)^2$$
is a smooth strictly subharmonic function on $\mathbb C_w$ that vanishes precisely on $\gamma$.   Using a regularised version of the maximum function, one can adjust the function $\epsilon u-\ln (1+|w|^2)$  for some small constant $\epsilon>0$ to have the correct behaviour far away from $\gamma$ to ensure $\phi$ extends to a smooth function over $\mathbb P^1$ and $\omega_{FS}+dd^c\phi>0$.   The reader will find full details in \cite[Section 5.4]{RossNystrommaximal}.

\subsection{Hele-Shaw flow with acute corner points}\label{sec:acute}

Our final example exploits work of Sakai concerning the behaviour of the Hele-Shaw flow with corner points.  A point $c$ lying on the boundary $\partial \Omega_t$ of the weak Hele-Shaw domain for $t$ in some non-trivial interval is referred to as a \emph{stationary point}.  Sakai proves in \cite{Sakai5} (see also \cite[Theorem 6.2]{Sakai}) that if $\partial \Omega_{0}$ contains a corner point $c$ with angle strictly less than $\pi/2$ then $c$ is a stationary point for the weak Hele-Shaw flow starting at $\Omega_{0}$ (this is to be taken as holding in the plane with its the Euclidean structure).  

Suppose that $\Omega_{0}\subset \mathbb C_z\subset \mathbb P^1$ is such a domain and set
$$ \omega: = \frac{i}{2}(1- \chi_{\Omega_{0}}) dz \wedge d\overline{z}$$
on a large ball containing $\Omega_{0}$.  We then extend $\omega$ to a smooth K\"ahler form outside of this ball to all of $\mathbb P^1$.  Observe that $\omega$ is absolutely continuous and semipositive, but of course not smooth.    Looking back at the proofs of the Duality Theorem and it implications for the movement of the boundary of the weak Hele-Shaw flow (Corollary \ref{cor:HSmovement}) it is clear that they still hold for such $\omega$.

\begin{proposition} With background form $\omega$, the weak solution $\tilde{\Phi}$ to the HMAE is not in $\mathcal{C}^1(\mathbb P^1\times \overline{\mathbb D})$.
\end{proposition}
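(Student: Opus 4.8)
The plan is to exploit the fact that this particular weight has been rigged so that the weak Hele-Shaw flow for $\omega$ develops a \emph{stationary} point at the acute corner $c$, and to show that such a stationary point is incompatible with $\tilde{\Phi}$ being $\mathcal{C}^1$ up to the boundary. The link is Proposition \ref{prop:Hu_t}, which identifies $H(z,1)+1$ with the arrival time $\sup\{t:z\notin\Omega_t\}$, combined with Corollary \ref{cor:HSmovement}(1); as remarked in the text, both the Duality Theorem and that corollary remain valid for the semipositive $\omega$ considered here.

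First I would identify $\{\Omega_t^\omega\}_{t>0}$ with the Euclidean weak Hele-Shaw flow starting from the fixed domain $\Omega_0$ that Sakai studies. Since $\omega=(1-\chi_{\Omega_0})\tfrac{i}{2}dz\wedge d\bar z$ vanishes on $\Omega_0$, the envelope $\psi_t$ is honestly subharmonic on the connected open set $\Omega_0$, is $\le 0$ there, and has a pole at $z_0\in\Omega_0$; by the strong maximum principle $\psi_t<0$ throughout $\Omega_0$, so $\Omega_0\subset\Omega_t^\omega$ for each $t>0$. The mass formula of Corollary \ref{cor:indofphi}, extended to semipositive $\omega$ exactly as in \S\ref{sec:ex3} via \cite[Remark 1.19, Corollary 2.5]{BermanDemailly}, gives $\int_{\Omega_t^\omega}h\,\omega=t\,h(0)$ for all integrable harmonic $h$; substituting the weight and using $\Omega_0\subset\Omega_t^\omega$ turns this into
\[\int_{\Omega_t^\omega}h\,dA=\int_{\Omega_0}h\,dA+t\,h(0),\]
which is precisely the quadrature identity characterising the Euclidean Hele-Shaw flow emanating from $\Omega_0$ with injection at the origin. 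By uniqueness of (weak) Hele-Shaw domains the two families therefore agree for all $t>0$.

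With this identification I would invoke Sakai \cite{Sakai5} (see also \cite[Theorem 6.2]{Sakai}): as $\partial\Omega_0$ has a corner of angle strictly less than $\pi/2$ at $c$, the point $c$ is stationary, i.e.\ $c\in\partial\Omega_t^\omega$ for all $t$ in a nontrivial interval $(0,t_1)$. Suppose now, for contradiction, that $\tilde{\Phi}\in\mathcal{C}^1(\mathbb{P}^1\times\overline{\mathbb{D}})$. Writing $s=-\ln|\tau|^2$, the function $H(z,\tau)=\frac{\partial}{\partial s^+}\tilde{\Phi}(z,e^{-s/2})$ is then a genuine (two-sided) continuous derivative all the way down to the boundary circle $s=0$, so $H(\cdot,1)$ is continuous on $\mathbb{P}^1$. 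But then Corollary \ref{cor:HSmovement}(1) forces the boundary of the flow to be strictly increasing: any $z\in\partial\Omega_t$ lies in $\Omega_{t'}$ for every $t'>t$. Taking $z=c$ and $t<t'$ both in $(0,t_1)$ yields $c\in\Omega_{t'}$, contradicting $c\in\partial\Omega_{t'}$. Hence $H(\cdot,1)$ is discontinuous at $c$, and therefore $\tilde{\Phi}\notin\mathcal{C}^1(\mathbb{P}^1\times\overline{\mathbb{D}})$.

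The main obstacle is the identification in the second step. One must check that the mass/quadrature formula of Corollary \ref{cor:indofphi} genuinely survives the passage to the degenerate weight $\omega=(1-\chi_{\Omega_0})\tfrac{i}{2}dz\wedge d\bar z$, whose associated permeability $\kappa$ is infinite on $\Omega_0$ so that Proposition \ref{prop:gust} does not apply verbatim, and that the weak $\omega$-flow is thereby pinned down by its moments and so inherits Sakai's stationary corner. Once this is in place, the passage from the stationary point to the failure of $\mathcal{C}^1$-regularity, through Proposition \ref{prop:Hu_t} and Corollary \ref{cor:HSmovement}(1), is immediate.
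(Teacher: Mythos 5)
Your proposal is correct and follows essentially the same route as the paper: Sakai's stationary acute corner combined with Corollary \ref{cor:HSmovement}(1) (valid for this semipositive $\omega$) shows $H(\cdot,1)$ cannot be continuous, hence $\tilde{\Phi}\notin\mathcal{C}^1(\mathbb P^1\times\overline{\mathbb D})$. The extra work you do identifying the weak flow for the degenerate weight with the Euclidean flow emanating from $\Omega_0$ is simply a fleshed-out version of what the paper dismisses as ``essentially by definition'' (though for the uniqueness step one should quote the quadrature \emph{inequality} for subharmonic test functions, which follows from Corollary \ref{cor:indofphi} just as easily, rather than only the harmonic identity).
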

\begin{proof}
Essentially by definition, $\Omega_{0}$ is the weak Hele-Shaw domain at time $t=0$ with respect to $\omega$.  By the result of Sakai, the corner point of $\Omega_{0}$ is stationary, and thus by Corollary \ref{cor:HSmovement}(1) the function $H$ is not continuous, which means $\tilde{\Phi}$ is not $\mathcal{C}^1$.
\end{proof}

The implications of this can be expressed in terms of potentials.  If $\omega_{FS}$ denotes the Fubini-Study form, then (after possibly scaling $\omega$) we can write
$$ \omega = \omega_{FS} + dd^c\phi$$
for some potential $\phi$.  As $\omega$ is absolutely continuous $\phi$ has bounded Laplacian, and thus lies in $\mathcal{C}^{1,\alpha}$ for all $\alpha<1$.  On the other hand combining the previous Proposition with Lemma \ref{lem:twist}, the weak solution 
$$ \Phi: = \sup\{ \Psi\in \Psh(\mathbb P^1\times \overline{\mathbb D},\pi_{\mathbb P^1}^*\omega_{FS})  : \Psi(z,\tau)\le \phi(\tau z, \tau) \text{ for } |\tau|=1\}$$
to the HMAE is not even in the class $\mathcal{C}^1$.

\subsection{Final Bibliographical Remarks}

The final example is new, but the first three are taken from \cite{RWApplications}, \cite{RWHarmonicDiscs} and \cite{RossNystrommaximal} respectively, and the reader will find stronger statements in these cited papers.  For instance in  \cite{RWApplications}  one can find an area form whose Hele-Shaw flow develops self-tangency along any given finite collection of smooth points and non-selfintersecting curve segments.  Thus it is possible to find Dirichlet data for an HMAE that is not twice differentiable at such a prescribed set of points.   And in \cite{RWHarmonicDiscs} it is shown that the phenomena of having (smooth) Dirichlet data for the HMAE for which there is an open set not meeting any harmonic disc can be made to persist under small deformations of the data.
\bibliographystyle{amsplain}

\section*{Acknowledgements}
\noindent We wish to thank Valentino Tosatti for conversations relating to this survey, as well as the referee for helpful comments and references.

\end{document}